\newtheorem{thm}{Theorem}[section]
\newtheorem{lem}{Lemma}[section]
\newtheorem{prop}{Proposition}[section]
\newcommand{\R}{\mathbb{R}}
\newcommand{\Z}{\mathbb{Z}}
\def\Xint#1{\mathchoice
{\XXint\displaystyle\textstyle{#1}}
{\XXint\textstyle\scriptstyle{#1}}
{\XXint\scriptstyle\scriptscriptstyle{#1}}
{\XXint\scriptscriptstyle\scriptscriptstyle{#1}}
\!\int}
\def\XXint#1#2#3{{\setbox0=\hbox{$#1{#2#3}{\int}$ }
\vcenter{\hbox{$#2#3$ }}\kern-.6\wd0}}
\def\dashint{\Xint-}
  \let\div\relax
  \DeclareMathOperator{\div}{div}
\newcommand{\MSonehalfspacing}{%
  \setstretch{1.44}%  default
  \ifcase \@ptsize \relax % 10pt
    \setstretch {1.448}%
  \or % 11pt
    \setstretch {1.399}%
  \or % 12pt
    \setstretch {1.433}%
  \fi
}
\newcommand{\MSdoublespacing}{%
  \setstretch {1.92}%  default
  \ifcase \@ptsize \relax % 10pt
    \setstretch {1.936}%
  \or % 11pt
    \setstretch {1.866}%
  \or % 12pt
    \setstretch {1.902}%
  \fi
}
\begin{document}

	\title{Uniqueness and Regularity of the Fractional Harmonic Gradient Flow in $S^{n-1}$}
	
	\author{Jerome Wettstein}
\maketitle
\date{ }
\begin{abstract}
In this paper, we study the fractional harmonic gradient flow on $S^1$ taking values in $S^{n-1} \subset \R^n$ for every $n \geq 2$, in particular addressing uniqueness and regularity of solutions in the so-called energy class with sufficiently small energy, adding to the existing body of knowledge which includes existence of solutions, see \cite{schisirewang}, and bubbling phenomena as studied by \cite{sireweizheng}. We extend the techniques by Struwe in \cite{struwe1} and Rivi\`ere in \cite{riv} to the non-local framework and exploit integrability by compensation properties due to fractional Wente-type inequalities as in \cite{mazoschi}. Moreover, we briefly discuss convergence properties for solutions to the fractional gradient flow as $t \to \infty$.
\end{abstract}

\medskip

\tableofcontents

	\section{Introduction}
	
	In this paper, we shall study gradient flows associated with the half-harmonic map equation, in particular questions pertaining to uniqueness, regularity and convergence as $t \to +\infty$ of solutions of the fractional harmonic gradient flow in $S^{n-1} \subset \R^n$. In \cite{struwe1} and \cite{struwe2}, Struwe studied global existence and uniqueness for the gradient flow associated with the classical harmonic map equation both in dimension $2$ as well as higher dimensions. Recall that harmonic maps are critical points of the standard Dirichlet energy which is defined for all maps $u: M \to N \subset \R^n$ in $H^{1}(M;N)$ by:
	$$E(u) := \frac{1}{2} \int_{M} g^{\alpha \beta}(x) \gamma_{ij}(u(x)) \frac{\partial u^{i}}{\partial x_{\alpha}}(x) \frac{\partial u^{j}}{\partial x_{\beta}}(x) dx,$$ 
	where $(M,g), (N, \gamma)$ smooth Riemannian manifolds, $u = (u^{1}, \ldots, u^{n})$ and employing Einstein's summation convention. In case $M = \Omega \subset \R^m$ and $N \subset \R^{n}$ are isometrically embedded in $\R^m$ and $\R^n$ and equipped with the Riemannian metrics induced by the standard scalar product, this reduces to:
	$$E(u) = \frac{1}{2} \int_{\Omega} | \nabla u |^2 dx$$ 
	In domains of dimension $2$, Struwe actually showed that up to a bubbling process at finitely many points, the number of which can be bounded by the initial energy, there exists a unique regular solution for all times. To be more precise, Struwe proved the following for the target manifold $N = S^{n-1}$ (a completely analogous result holds for general $N$):
	
	\begin{thm}[Theorem 1, p.98, \cite{riv}]
	\label{mainstruwe1}
		Let $\Omega \subset \R^2$ as well as $u_0 \in H^{1}(\Omega; S^{n-1}), \gamma \in C^{\infty}(\partial \Omega; S^{n-1})$. Then there exists a solution $u \in H^{1}(]0, +\infty[;L^{2}(\Omega))$ of the harmonic gradient flow:
		\begin{equation}
		\label{harmonicflowstruwe1}
			\partial_{t} u - \Delta u = u | \nabla u |^2 \quad \text{ in } \mathcal{D}'(]0, T[ \times \Omega), \quad \forall T > 0,
		\end{equation}
		together with the boundary conditions:
		\begin{align}
		\label{harmonicflowstruwe1bound1}
			u(t,x) 	&= \gamma(x), \quad 	&\text{ for all } t \geq 0, x \in \partial \Omega \\
		\label{harmonicflowstruwe1bound2}
			u(0,x)	&= u_{0}(x), \quad		&\text{ for all } x \in \Omega,
		\end{align}
		and satisfying $E(u(t, \cdot)) \leq E(u_{0})$ for all times $t \geq 0$. The solution $u$ is regular on $]0,+\infty[ \times \overline{\Omega}$, except in a finite number of points $(t_{k}, x_{k})$, $k = 1, \ldots, K$, for some $K \in \mathbb{N}$. Additionally, $u$ is unique in the class $\mathcal{E} \subset H^{1}_{loc}([0,+\infty[ \times \overline{\Omega})$ defined by:
		$$\mathcal{E} := \Big{\{} u \ \Big{|}\ \exists m \in \mathbb{N}, \exists T_0 = 0 < T_1 < \ldots < T_{m} < \infty: u \in L^{2}([T_{i}, T_{i+1}[; W^{2,2}(\Omega)), \forall i \leq m-1 \Big{\}}$$
		Finally, there exists a constant $C > 0$ independent of $u_0$, such that:
		$$K \leq C \cdot E(u_0)$$
	\end{thm}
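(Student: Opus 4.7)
The plan is to combine Struwe's construction from \cite{struwe1} with Rivière's uniqueness argument from \cite{riv}, organized in three blocks. First, for the existence of a weak solution together with the energy inequality $E(u(t,\cdot)) \leq E(u_0)$, I would approximate the constraint $u \in S^{n-1}$ by a Ginzburg--Landau type penalty $\frac{1}{\varepsilon}(|u_\varepsilon|^2 - 1)^2$, solve the resulting unconstrained parabolic equation by standard methods, and pass to the limit $\varepsilon \to 0$ using the uniform estimate
\[
E(u_\varepsilon(T,\cdot)) + \int_0^T \int_\Omega |\partial_t u_\varepsilon|^2\, dx\, dt \leq E(u_0).
\]
The constraint $|u|=1$ is recovered in the limit, and the boundary and initial conditions are preserved by lower semicontinuity and trace estimates.

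Second, for regularity away from finitely many points and the bound $K \leq C \cdot E(u_0)$, the central ingredient is a small-energy $\varepsilon$-regularity theorem: there exists $\varepsilon_0 > 0$ such that whenever the local energy $\sup_{t \in [t_0 - R^2, t_0]} \int_{B_R(x_0) \cap \Omega} |\nabla u(t,x)|^2\, dx$ is below $\varepsilon_0$, the solution $u$ is smooth in a parabolic neighborhood of $(t_0, x_0)$. I would derive this via a Bochner-type inequality for $|\nabla u|^2$ combined with parabolic Moser iteration, using the sphere constraint to control the nonlinearity. Together with the global energy inequality, this forces at least $\varepsilon_0$ of energy to concentrate at every singular point, yielding both the finiteness of $K$ and the linear bound $K \leq E(u_0)/\varepsilon_0$ by a counting argument.

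Third, for uniqueness in $\mathcal{E}$, I would follow Rivière's approach. Given $u, v \in \mathcal{E}$ sharing the data, set $w = u-v$; on each interval $[T_i, T_{i+1}[$ on which both are in $L^2(W^{2,2})$, the difference satisfies
\[
\partial_t w - \Delta w = u|\nabla u|^2 - v|\nabla v|^2.
\]
A direct $L^2$ energy estimate fails because the nonlinearity is critical in two dimensions. The resolution is to exploit the antisymmetric structure of the sphere constraint ($u \cdot \nabla u = 0$), recast the problematic term as a sum of Jacobians of the form $\nabla a \cdot \nabla^\perp b$ with $a, b \in H^1$, and apply Wente-type estimates (using the Hardy-space improvement of $L^1$ together with the BMO duality) to gain the integrability needed to absorb the critical contribution. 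A Gronwall argument on each $[T_i, T_{i+1}[$ then propagates uniqueness across the finite list of times.

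The main obstacle I expect is the uniqueness step. The equation is critical in dimension two, so the argument must rest on the compensated-compactness structure of the nonlinearity rather than on coercive energy estimates; implementing Wente's inequality in a form compatible with the piecewise $L^2_t W^{2,2}_x$ regularity afforded by $\mathcal{E}$, and carefully propagating the estimate across the potentially singular times $T_i$, is the most delicate point. By contrast, the $\varepsilon$-regularity theorem, while non-trivial, is by now essentially standard parabolic bootstrapping once the appropriate Bochner inequality is in place.
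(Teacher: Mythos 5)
The statement you are proving is Struwe's Theorem 1 (as restated in Rivi\`ere's thesis); it is quoted as background in the paper and is not proved there, so there is no internal proof to compare against. Judged against the argument of \cite{struwe1} itself, your sketch has one substantial misconception, in the uniqueness step.

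You claim that in the class $\mathcal{E}$ ``a direct $L^2$ energy estimate fails because the nonlinearity is critical,'' and that one must instead invoke the antisymmetric structure and Wente-type estimates. This is not the case, and it conflates Theorem \ref{mainstruwe1} with Rivi\`ere's subsequent refinement (Theorem 2, p.99 of \cite{riv}). The entire point of the class $\mathcal{E}$ is that the additional $L^2_t W^{2,2}_x$ regularity makes the direct $L^2$ Gronwall estimate \emph{work}: testing the equation for $w = u - v$ against $w$, the critical term $\int |w|^2 |\nabla u|^2\,dx$ is controlled via Ladyzhenskaya's interpolation $\|f\|_{L^4(\Omega)}^2 \lesssim \|f\|_{L^2}\|f\|_{H^1}$, giving
\[
\int_\Omega |w|^2 |\nabla u|^2\,dx \;\lesssim\; \|w\|_{L^2}\|\nabla w\|_{L^2}\;\|\nabla u\|_{L^2}\|D^2 u\|_{L^2},
\]
and the factor $\|D^2 u\|_{L^2(\Omega)}$ is locally square-integrable in $t$ precisely because $u \in \mathcal{E}$. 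One then absorbs $\|\nabla w\|_{L^2}$ into the dissipation, applies Gronwall on each $[T_i, T_{i+1}[$, and matches data at the $T_i$ by $L^2$-continuity. This is Lemma 3.12 of \cite{struwe1}, and it is exactly the argument the present paper adapts to the fractional setting in its Theorem \ref{uniqueness}. No Wente estimate and no antisymmetric potential are needed at this stage. The compensated-compactness machinery (Wente, Hardy-BMO duality, the sphere's div-free structure) is the extra input required to \emph{remove} the $W^{2,2}$ assumption and prove uniqueness in the bare energy class under a small-energy hypothesis --- that is Rivi\`ere's Theorem 2, not the theorem you were asked to prove. As written, your uniqueness step misidentifies where the difficulty lies and attacks a harder problem than necessary.

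Two smaller remarks. For existence, Struwe does not use Ginzburg--Landau penalization; he builds local strong solutions by a contraction argument in parabolic function spaces and extends past singular times after subtracting bubbles, with the energy inequality obtained directly by testing against $u_t$. Penalization is a legitimate alternative route (\`a la Chen, Shatah) and your version could be made to work, but it is not the argument behind the cited theorem, and passing the constraint $|u|=1$ and the energy inequality to the limit requires more care than your one-line description suggests. For regularity and the bound on $K$, your outline (parabolic $\varepsilon$-regularity from a Bochner inequality plus Moser iteration, then counting energy quanta) matches Struwe's scheme and is correct, modulo the standard caveat that the $\varepsilon$-regularity threshold must be shown to be universal and that the energy lost at each bubble is bounded below by a fixed constant, which is the content of Struwe's Lemmas 3.6--3.10.
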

	A minor drawback of this result is the additional regularity requirement in the definition of $\mathcal{E}$ needed to ensure uniqueness. However, in \cite{riv}, Rivi\`ere managed to remove this condition for solutions in the \textit{energy class} and $N = S^{n-1}$, provided the initial energy is sufficently small. Solutions in the energy class actually refers to solutions $u$ which lie merely in $H^{1}(]0, +\infty[;L^{2}(\Omega)) \cap L^{\infty}([0, +\infty[; H^{1}(\Omega))$ satisfying the inequality $E(u(t, \cdot)) \leq E(u_{0})$. This approach exploited integrability by compensation phenomena inherent to the structure of the harmonic map equation, namely Wente's estimate. To be precise, the following was proven in \cite{riv}:
	
	\begin{thm}[Theorem 2, p.99, \cite{riv}]
		There exists $\varepsilon > 0$, such that for every $u_0 \in H^{1}(\Omega; S^{n-1})$ with:
		$$E(u_0) < \varepsilon,$$
		existence of a unique solution of \eqref{harmonicflowstruwe1}, \eqref{harmonicflowstruwe1bound1}, \eqref{harmonicflowstruwe1bound2} in $H^{1}_{loc}([0,+\infty[ \times \overline{\Omega})$ satisfying $E(u(t, \cdot)) \leq E(u_{0})$ for almost every time $t \geq 0$ is guaranteed. The solution $u$ is in fact regular in $]0, +\infty[ \times \overline{\Omega}$.
	\end{thm}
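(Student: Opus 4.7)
The plan is to derive existence directly from Struwe's construction (Theorem \ref{mainstruwe1}), which already yields a weak solution $u \in H^{1}(]0,+\infty[; L^{2}(\Omega)) \cap L^{\infty}([0,\infty[; H^{1}(\Omega))$ obeying the energy inequality $E(u(t,\cdot)) \leq E(u_0)$. The new content to be established is therefore uniqueness in the (minimal) \emph{energy class} — i.e.\ without the extra $L^{2}_{t} W^{2,2}_{x}$ requirement built into $\mathcal{E}$ — together with the full regularity on $]0,\infty[\times \overline{\Omega}$, both under the smallness hypothesis $E(u_0) < \varepsilon$.

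The structural ingredient driving both assertions is a H\'elein-type rewriting of the nonlinearity afforded by the sphere constraint. Since $|u|^{2}\equiv 1$ forces $u \cdot \partial_{k} u = 0$, one obtains
\[
    (u |\nabla u|^{2})^{i} \;=\; \sum_{j,k} \big(u^{i} \partial_{k} u^{j} - u^{j} \partial_{k} u^{i}\big)\, \partial_{k} u^{j} \;=\; \Omega^{ij}\cdot \nabla u^{j},
\]
with antisymmetric potential $\Omega^{ij} = u^{i} \nabla u^{j} - u^{j} \nabla u^{i}$ whose divergence is $u^{i} \partial_{t} u^{j} - u^{j} \partial_{t} u^{i}$, controlled in $L^{2}_{t,x}$ by $\partial_{t} u \in L^{2}_{t,x}$. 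Modulo this explicitly controlled remainder, $\Omega$ admits a Hodge representation $\Omega \simeq \nablap B$ with $\|\nabla B\|_{L^{2}_{x}} \ls \|\nabla u\|_{L^{2}_{x}} \ls \sqrt{E(u_0)}$, which is precisely the structure required to invoke Wente's inequality.

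For uniqueness, I would pick two solutions $u_{1}, u_{2}$ with identical initial and boundary data, set $w := u_{1} - u_{2}$, and test the resulting equation against $w$, producing
\[
    \frac{1}{2}\, \frac{d}{dt} \|w\|_{L^{2}}^{2} \,+\, \|\nabla w\|_{L^{2}}^{2} \;=\; \int_{\Omega} w \cdot \big( u_{1} |\nabla u_{1}|^{2} - u_{2} |\nabla u_{2}|^{2} \big)\, dx.
\]
Expanding the right-hand side via the algebraic identity above writes it as a sum of terms each with Jacobian structure, to which Wente's inequality applies at the level $\ls \sqrt{E(u_0)}\, \|\nabla w\|_{L^{2}}^{2}$. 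Choosing $\varepsilon$ sufficiently small one absorbs everything into $\|\nabla w\|_{L^{2}}^{2}$ on the left, deducing $\frac{d}{dt} \|w\|_{L^{2}}^{2} \leq 0$; combined with $w(0) \equiv 0$ this forces $w \equiv 0$.

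For regularity on $]0,+\infty[\times \overline{\Omega}$, I would bootstrap: the same Wente-type control combined with parabolic regularity first upgrades $\nabla u$ from $L^{2}_{x}$ to some $L^{p}_{x}$ with $p > 2$, after which a standard iteration yields $C^{\infty}$-smoothness away from $t = 0$. The main obstacle, as already in Rivi\`ere's elliptic-style argument, is to adapt Wente from its classical form ($-\Delta \phi = \nabla^{\perp} a \cdot \nabla b$) to the parabolic setting, and to control uniformly in time the divergence correction $u \wedge \partial_{t} u$ which breaks the exact Jacobian structure of $\Omega$ — it is precisely this coupling between the heat semigroup and the compensation phenomenon that forces the smallness assumption on $E(u_0)$.
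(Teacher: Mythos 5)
Your proposal collapses into a single one-shot estimate what, in Rivi\`ere's proof (and in the paper's fractional analogue — see Theorem \ref{uniqueness}, Proposition \ref{uniqueinenergyclassunderh1assumption} and Lemma \ref{reglemma}), are two logically separated steps, and the gap is exactly at the seam. The actual architecture is: \emph{first} prove uniqueness among solutions that additionally satisfy $u \in L^{2}_{\mathrm{loc}}([0,\infty[;W^{2,2}(\Omega))$ by a Struwe-type Gronwall argument, where the trilinear term $\int |w|^2|\nabla u|^2$ is controlled by Ladyzhenskaya interpolation $\|f\|_{L^4}^2\lesssim\|f\|_{L^2}\|\nabla f\|_{L^2}$ applied to both $w$ and $\nabla u$ — this needs $\nabla^2 u\in L^2_x$ and never touches Wente; \emph{then} show that a small-energy energy-class solution in fact has this extra regularity, by freezing a.e.\ time $t$ and treating
\[
  \Delta u(t) \;=\; u(t)\,|\nabla u(t)|^2 \;-\; \partial_t u(t), \qquad \partial_t u(t)\in L^2(\Omega),
\]
as a time-independent elliptic system $\Delta u = \Omega\cdot\nabla u + f$ with anti-symmetric $\Omega$ and $L^2$ source, to which Wente / Rivi\`ere's conservation laws apply. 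Your one-shot version tests the difference equation against $w$ and asserts that the right-hand side ``has Jacobian structure to which Wente applies''; but the term $\int |w|^2|\nabla u_1|^2$ appearing in the expansion $u_1|\nabla u_1|^2 - u_2|\nabla u_2|^2 = w|\nabla u_1|^2 + u_2\nabla(u_1+u_2)\cdot\nabla w$ is \emph{not} a div-curl quantity — $|\nabla u_1|^2$ is symmetric — and compensated compactness gives you nothing there. To absorb it you need $\nabla u_1\in L^4_x$, i.e.\ precisely the $W^{2,2}$ regularity you are implicitly assuming; you are feeding the output of the regularity step into the uniqueness step before you have proved it.

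A second, related imprecision: you flag the non-vanishing divergence of $\Omega$ (equal to $u\wedge\partial_t u$) as ``the main obstacle'' for a parabolic Wente. In the paper's route this issue never materializes: since one works at frozen time, $\partial_t u(t)$ is simply an $L^2$ forcing term on the source side of the elliptic equation, and the Hodge-decomposed $\Omega$ is handled by the Coulomb/conservation-law gauge exactly as in the elliptic theory (indeed, for the sphere target the needed anti-symmetric structure is explicit). Your sketch leaves this unresolved, so your uniqueness argument would not close as stated. The ingredients you name — anti-symmetric $\Omega$, Hodge decomposition, Wente, absorption by smallness — are all the right ones; what is missing is the two-step ordering: improve the spatial regularity of $u(t)$ at fixed $t$ via elliptic Wente, integrate in $t$ to land in $L^2_t W^{2,2}_x$, and only then run the Gronwall uniqueness argument.
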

	
	A key point in the proof is the smallness of the energy that allows us to deduce slightly better regularity for the trace $u(t, \cdot)$ at a.e. fixed time. One should notice that if $\varepsilon > 0$ is sufficiently small, in Struwe's result, Theorem \ref{mainstruwe1}, the possibility of bubbling could be excluded, hence establishing global regularity. Later, in \cite{freire}, Freire was able to remove the small energy restriction and prove a general uniqueness result in the energy class for arbitrary $N$. He did so by employing H\'elein's moving frame technique in the context of the harmonic gradient flow.
	
	Our goal is to generalize the approach by Rivi\`ere in \cite{riv} to the non-local framework and thus to the half-harmonic gradient flow.\\
	
	In analogy to harmonic maps, we may say that a map $u: S^1 \to N \subset \R^n$ is weakly $1/2$-harmonic, if it is a critical point of the following energy:
	\begin{equation}
		E_{1/2}(u) := \frac{1}{2} \int_{S^1} | (-\Delta)^{1/4} u |^2 dx,
	\end{equation}
	with respect to variations in the following set:
	$$H^{1/2}(S^1;N) := \big{\{} v \in H^{1/2}(S^1; \mathbb{R}^{n})\ \big{|}\ u(x) \in N, \text{ for a.e. } x \in S^1 \big{\}}$$
	For convenience's sake, we shall abbreviate $E_{1/2}$ by $E$ throughout the paper. Observe that the criticality condition implies that for every $\Phi \in \dot{H}^{1/2}(S^1;\mathbb{R}^{n}) \cap L^{\infty}(S^1)$, in particular all $\Phi \in C^{\infty}(S^1;\R^n)$, we have:
	\begin{equation}
		\frac{d}{dt} E_{1/2} \left( \pi( u + t \Phi ) \right) \Big{|}_{t = 0} = 0,
	\end{equation}
	where $\pi$ is the orthogonal closest-point projection to $N$, which is defined in a sufficiently small neighbourhood of $N$ and smooth due to $N$ being smooth. As we shall see, this condition is equivalent to:
	\begin{equation}
	\label{1/2harmonicbyorthogpojection}
		d\pi(u) (-\Delta)^{1/2} u = 0 \quad \text{ in } \mathcal{D}'(S^1),
	\end{equation}
	which is sometimes also stated informally in the following form, observing that $d\pi(x)$ is the orthogonal projection to $T_{x} N$ for every $x \in N$:
	$$(-\Delta)^{1/2} u \perp T_{u}N$$
	In our case of interest, $N = S^{n-1}$, this could be restated as:
	$$u \wedge (-\Delta)^{1/2} u = 0 \quad \text{ in } \mathcal{D}'(S^1).$$
	It is clear that, in order to study the regularity of $1/2$-harmonic maps, the first step lies in the reformulation of \eqref{1/2harmonicbyorthogpojection}. Naturally, corresponding definitions for $\R$ instead of $S^1$ are possible.
	
	In fact, the regularity and reformulations were first studied by the authors in \cite{dalioriv}, only the domain being $\R$ instead of $S^1$, the same paper where $1/2$-harmonic maps were first introduced. Since \cite{dalioriv}, several extensions have been considered in \cite{daliocomment1}, \cite{schikorradaliocomment}, \cite{dalioschikorra}, \cite{daliocomment2}, \cite{daliopigati}. The regularity of $1/2$-harmonic maps relies on the following compensation phenomena discovered in \cite{dalioriv2}: If $\Omega \in L^{2}_{loc}(\R; so(m)), v \in L^{2}_{loc}(\R;\R^{m})$ and $f \in L^{p}_{loc}(\R; \R^{m})$, where $1 \leq p \leq 2$ satisfy
	$$(-\Delta)^{1/4} v = \Omega \cdot v + f \quad \text{ in } \mathcal{D}'(\R),$$
	then $(-\Delta)^{1/4} v \in L^{p}_{loc}(\R)$, i.e. $v \in \dot{W}^{1/2, p}(\R)$. This phenomena is based on the existence of special operators satsifying improved integrability properties due to compensation. One such operator is, for instance, given by the so-called \textit{three-term commutator}:
	$$\mathcal{T}: L^{2}(\R; \R^{m}) \times \dot{H}^{1/2}(\R; \R^{m \times m}) \to \dot{H}^{-1/2}(\R; \R^{m}),$$
	defined by:
	$$\mathcal{T}(v,Q) := (-\Delta)^{1/4} (Qv) - Q (-\Delta)^{1/4} v + (-\Delta)^{1/4} Q \cdot v$$
	It is proven in \cite{dalioriv} that:
	$$\| \mathcal{T}(v,Q) \|_{\dot{H}^{-1/2}} \lesssim \| Q \|_{\dot{H}^{1/2}} \| v \|_{L^2}$$
	We also refer to \cite{lenzschi} for an overview of different types of commutator estimates. Recently in \cite{mazoschi}, inspired also by \cite{millotsire}, the authors recast integrability by compensation for fractional operators and commutator estimates in a "classical local way", by applying the notions of fractional divergences and fractional gradients, see Section 2.2 for their definitions. In particular, they succeeded in recasting the integrability by compensation in terms of the following non-local result reminiscent of the result by Coifman, Lions, Meyer and Semmes \cite{coifman}:
	
	\begin{lem}[Theorem 2.1, \cite{mazoschi}]
	\label{fractwenteinintroductionfornow}
		Let $s \in (0,1)$ and $p \in (1,\infty)$. For $F \in L^{p}_{od}(\R \times \R)$ and $g \in \dot{W}^{s,p'}(\R)$, where $p'$ denotes the H\"older dual of $p$, we assume that $\div_s F = 0$. Then $F \cdot d_s g$ lies in the Hardy space $\mathcal{H}^{1}(\R)$ and we have the estimate:
		$$\| F \cdot d_s g \|_{\mathcal{H}^{1}(\R)} \lesssim \| F \|_{L^{p}_{od}(\R \times \R)} \cdot \| g \|_{\dot{W}^{s,p'}(\R)}.$$
	\end{lem}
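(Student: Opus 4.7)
The plan is to adapt the classical Coifman--Lions--Meyer--Semmes argument to the non-local framework, using the Fefferman duality $(\mathcal{H}^{1}(\R))^{*} = BMO(\R)$. Accordingly, it suffices to prove that for every $\phi$ in a dense subclass of $BMO(\R)$ (e.g.\ Schwartz functions) one has
\begin{equation*}
\left| \int_{\R} \phi(x)\, (F \cdot d_{s} g)(x)\, dx \right| \lesssim \|\phi\|_{BMO}\, \|F\|_{L^{p}_{od}}\, \|g\|_{\dot{W}^{s,p'}}.
\end{equation*}

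The first step is to exploit the hypothesis $\div_{s} F = 0$ via the natural integration-by-parts identity between $\div_{s}$ and $d_{s}$ introduced in \cite{mazoschi}, namely $\int (\div_{s} F)\, h\, dx = \int F \cdot d_{s} h$. Taking $h = \phi g$ yields $\int F \cdot d_{s}(\phi g) = 0$, hence
\begin{equation*}
\int_{\R} \phi\, (F \cdot d_{s} g)\, dx \;=\; \int F \cdot \bigl[\phi\, d_{s} g - d_{s}(\phi g)\bigr].
\end{equation*}
By Hölder's inequality in $L^{p}_{od} \times L^{p'}_{od}$, the task is then reduced to a commutator estimate
\begin{equation*}
\bigl\| \phi\, d_{s} g - d_{s}(\phi g) \bigr\|_{L^{p'}_{od}(\R \times \R)} \lesssim \|\phi\|_{BMO}\, \|g\|_{\dot{W}^{s,p'}}.
\end{equation*}

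The heart of the proof, and the main obstacle, is exactly this fractional commutator bound, which is the non-local analogue of the Coifman--Rochberg--Weiss theorem. One concrete route is to decompose $\R \times \R$ dyadically in $|x-y|$ and combine the John--Nirenberg inequality, which controls the $BMO$-oscillation of $\phi$ at each scale, with the $\dot{W}^{s,p'}$-control of the $g$-differences through a Stein--Zygmund type estimate on each annulus. A cleaner alternative is to factor $d_{s}$ through the Riesz transform composed with $(-\Delta)^{s/2}$ and then invoke the three-term commutator estimate of \cite{dalioriv} together with the classical boundedness of $BMO$-commutators of Calder\'on--Zygmund operators. In either approach, the commutator step is genuinely subtle since $BMO$ is strictly larger than $\dot{W}^{s,p}$, so one cannot simply pass to a product estimate for $d_{s}\phi$ and $g$.

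Once the commutator estimate is in hand, a density argument in $BMO$ together with a standard smooth approximation of $F$ and $g$ (preserving the divergence-free condition by mollification in the off-diagonal space) extends the pairing bound from the dense subclass to the full duality statement, yielding the desired $\mathcal{H}^{1}$-estimate.
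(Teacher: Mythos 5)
The paper does not prove this lemma; it quotes it verbatim from \cite{mazoschi}, where the argument is a non-local adaptation of the Coifman--Lions--Meyer--Semmes proof using the grand maximal function characterisation of $\mathcal{H}^{1}$. Your duality-with-$BMO$ starting point and the integration by parts via $\div_{s} F = 0$ are the right opening move, but the step you reduce everything to is false as stated. Using the Leibniz rule $d_{s}(\phi g)(x,y) = d_{s}\phi(x,y) g(x) + \phi(y) d_{s} g(x,y)$, your bracket simplifies completely:
\begin{align*}
\phi(x)\, d_{s} g(x,y) - d_{s}(\phi g)(x,y)
&= \bigl(\phi(x) - \phi(y)\bigr) d_{s} g(x,y) - d_{s}\phi(x,y)\, g(x) \\
&= d_{s}\phi(x,y)\bigl(g(x) - g(y)\bigr) - d_{s}\phi(x,y)\, g(x)
= -\, d_{s}\phi(x,y)\, g(y),
\end{align*}
so your claimed commutator bound is equivalent to
\begin{equation*}
\Bigl( \int_{\R}\! |g(y)|^{p'} \int_{\R} \frac{|\phi(x)-\phi(y)|^{p'}}{|x-y|^{1+sp'}}\, dx\, dy \Bigr)^{1/p'} \lesssim \|\phi\|_{BMO}\, \|g\|_{\dot{W}^{s,p'}}.
\end{equation*}
The inner integral is a pointwise $\dot{W}^{s,p'}$-type quantity for $\phi$, and $BMO$ does not control it: for a truncated logarithm $\phi_{N}(x) = \min(\log(1/|x|), N)$ one has $\|\phi_{N}\|_{BMO}$ uniformly bounded while the inner integral at $y$ near $0$ diverges as $N \to \infty$. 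Thus one cannot pass to a global H\"older bound in $L^{p}_{od}\times L^{p'}_{od}$ after a single application of $\div_{s} F = 0$ — doing so discards the cancellation that makes the product an $\mathcal{H}^{1}$ function rather than merely $L^{1}$. This is precisely why, both in CLMS and in \cite{mazoschi}, the proof instead estimates $\sup_{t>0}|\psi_{t} * (F\cdot d_{s}g)|$ directly: one subtracts the mean of $g$ over the scale-$t$ ball (this subtraction is legitimate exactly because $\div_{s}F = 0$), splits the off-diagonal integral into $|x-y|\lesssim t$ and $|x-y|\gtrsim t$, and controls each piece by maximal functions and a local oscillation of $\phi$ at that scale, so that John--Nirenberg enters scale-by-scale rather than globally. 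Your two sketched "routes" gesture in this direction but do not fix the flaw, because the target inequality they are meant to prove does not hold.
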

	
	Lemma \ref{fractwenteinintroductionfornow} has permitted the authors in \cite{mazoschi} to show in an alternative way the regularising effect of non-local systems with anti-symmetric potentials.\\

	In this paper, we are going to study the gradient flow associated with the energy $E_{1/2}$ introduced above, referred to as the fractional or $1/2$-harmonic gradient flow. Namely, we shall study solutions $u$ of the following non-local PDE on $[0, +\infty[ \times S^1$ taking values in the sphere $S^{n-1} \subset \R^n$:
	\begin{equation}
	\label{gradflowintermsoforthogonality}
		d \pi(u) \left( u_t + E_{1/2}'(u) \right) = d \pi(u) \left( u_t + (-\Delta)^{1/2} u \right) = 0,
	\end{equation}
	with $u(0, \cdot) = u_0$ for some initial datum $u_0 \in H^{1/2}(S^1;S^{n-1})$. As in the case of fractional harmonic maps, a first step would be to rephrase the fractional harmonic flow and we shall obtain in the paper the reformulation:
	\begin{equation}
	\label{gradflowinintroduction}
		u_t + (-\Delta)^{1/2} u = u | d_{1/2} u |^2,
	\end{equation}
	where $u$ satisfies $u(0, \cdot) = u_0$. The notation used shall be introduced later on in the paper, however we emphasise that the RHS of the equation is closely related to the $1/2$-harmonic map equation. It should be noted that the formulation \eqref{gradflowinintroduction} mirrors some of the features found in the local case and builds upon the formulation of fractional harmonic maps in \cite{mazoschi}. This equation will be derived later on in the paper.
	
	One might ask what is known for the half-harmonic gradient flow \eqref{gradflowintermsoforthogonality}, \eqref{gradflowinintroduction}. For example, in \cite{schisirewang}, the authors studied and proved the existence of a solution to the half-harmonic gradient flow assuming the map takes values in a sufficiently nice target manifold, i.e. a closed homogeneous space such as the space of interest $N = S^{n-1}$. In fact, they consider for $0 < s < 1$ and $1 < p < +\infty$ the energy functional:
	$$E_{s,p}(u) := \frac{1}{p} \int_{\Omega \times \Omega} \frac{| u(x) - u(y) |^{p}}{| x-y |^{n+sp}} dx dy,$$
	where $\Omega \subset \R^m$ is smooth and bounded, and study the fractional gradient flow equation given informally by:
	\begin{equation}
		d\pi(u) \left( \partial_{t} u + E_{s,p}'(u) \right) = 0,
	\end{equation} 
	for closed manifolds $N \subset \R^n$ and the closest point projection $\pi$, showing existence of an appropriate candidate for general $N$ and verifying that the constructed candidate is a solution, provided $N$ is a homogeneous space. Their methods involve approximations by a piecewise minimization process and immediately yield, in contrast to the techniques employed by Struwe, a global existence result. We highlight that provided $p=2$ and $s = 1/2$, we recover the fractional harmonic gradient flow in \eqref{gradflowintermsoforthogonality}, and consequently \eqref{gradflowinintroduction}, which we will be studying, thus complementing the treatment in \cite{schisirewang} in the case $S^{n-1}$. We mention that using $S^1$ instead of a bounded interval $\Omega \subset \R$ does not obstruct the proof presented in \cite{schisirewang}, as all arguments carry over immediately, therefore the existence result continues to hold true for the domain $S^1$, at least for closed, homogeneous target manifolds.
	
	Nevertheless, the nature of the argument in \cite{schisirewang} does not allow for a uniqueness statement or provide an analysis of possible types of blow-ups in (in)finite time. Questions regarding blow-ups were studied for example in \cite{sireweizheng} where the authors exhibit that only blow-ups in infinite time may occur for certain initial data and conjecture that the same might hold in general.\\
	
	Our main result in this paper will be the following: 
	
	\begin{thm}
	\label{mainresultsection1}
		Let $u_0 \in H^{1/2}(S^1;S^{n-1})$ be any initial data. There exists $\varepsilon > 0$, such that if:
		$$\| ( - \Delta)^{1/4} u_0 \|_{L^2(S^1)} \leq \varepsilon,$$
		then there exists a unique energy class solution $u: \R_{+} \times S^1 \to S^{n-1} \subset \R^n$ of the weak fractional harmonic gradient flow:
		$$u_t + (-\Delta)^{1/2} u = u | d_{1/2} u |^2,$$
		satisfying $u(0, \cdot) = u_0$  in the sense $u(t,\cdot) \to u_0$ in $L^2$, as $t \to 0$. Moreover, the solution fulfills the energy decay estimate:
		$$\| (-\Delta)^{1/4} u(t) \|_{L^2 (S^1)} \leq \| (-\Delta)^{1/4} u_0 \|_{L^{2}(S^1)}.$$
		In fact, $u \in C^{\infty}(]0,\infty[ \times S^1)$ and for an appropriate subsequence $t_k \to \infty$, the sequence $u(t_k)$ converges weakly in $H^{1}(S^1)$ to a point.
	\end{thm}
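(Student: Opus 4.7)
The plan is to adapt Rivi\`ere's strategy for the classical harmonic gradient flow to the non-local setting, with the fractional Wente-type estimate of Lemma~\ref{fractwenteinintroductionfornow} playing the role that the Coifman--Lions--Meyer--Semmes estimate plays in \cite{riv}. Existence of an energy-class solution can be obtained by adapting the piecewise minimisation scheme of \cite{schisirewang} from $\Omega \subset \R$ to $S^1$, which causes no real difficulty since $S^1$ can be covered by intervals. Testing the equation against $u_t$ and using $u \in S^{n-1}$ (so the right hand side is $L^2$-orthogonal to $u_t$) yields the energy inequality
\begin{equation*}
\int_0^t \|u_s\|_{L^2(S^1)}^2\,ds + \|(-\Delta)^{1/4} u(t)\|_{L^2(S^1)}^2 \le \|(-\Delta)^{1/4} u_0\|_{L^2(S^1)}^2.
\end{equation*}
The crucial preliminary step is to rewrite the right hand side $u|d_{1/2}u|^2$ using the sphere constraint $|u|=1$ as an antisymmetric potential acting on $u$, a fractional analogue of the decomposition $u|\nabla u|^2 = -u \wedge (u \wedge \Delta u)$, so that the compensation structure underlying Lemma~\ref{fractwenteinintroductionfornow} becomes available.

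For the uniqueness assertion, I take two energy-class solutions $u,v$ with $u(0,\cdot) = v(0,\cdot) = u_0$ and derive an equation for $w := u - v$ of the form $w_t + (-\Delta)^{1/2} w = R(u,v;w)$, where $R$ decomposes into terms that are either (i) fractional Jacobian quantities satisfying $\div_{1/2} F = 0$ and hence amenable to Lemma~\ref{fractwenteinintroductionfornow}, or (ii) pointwise quadratic in the fractional gradients and absorbed by the small energy. Pairing with $w$ and using the improved trace regularity $u(t,\cdot), v(t,\cdot) \in \dot W^{1/2,p}$ for some $p>2$ at almost every time---which itself is a consequence of the small-energy hypothesis, obtained by freezing $t$ and running the antisymmetric-potential argument of \cite{mazoschi} on the stationary equation satisfied by the trace---one arrives at a Gr\"onwall-type inequality
\begin{equation*}
\|w(t)\|_{L^2(S^1)}^2 \le C\varepsilon \int_0^t \|w(s)\|_{L^2(S^1)}^2\,ds,
\end{equation*}
which forces $w \equiv 0$ as soon as $C\varepsilon < 1$.

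Regularity on $]0,\infty[ \times S^1$ then follows by iterating the same scheme: given uniqueness and the gained $\dot W^{1/2,p}$ trace regularity, one feeds the equation through Lemma~\ref{fractwenteinintroductionfornow} together with the parabolic smoothing of $\partial_t + (-\Delta)^{1/2}$ to gain fractional derivatives in stages, eventually reaching $C^{\infty}$. The asymptotic statement follows from the global dissipation bound $\int_0^{\infty} \|u_t\|_{L^2(S^1)}^2\,dt < \infty$: one extracts $t_k \to \infty$ with $u_t(t_k,\cdot) \to 0$ in $L^2$, which makes $u(t_k,\cdot)$ an approximate $1/2$-harmonic map with energy below $\varepsilon^2$; by the bootstrapped regularity the sequence is bounded in $H^1(S^1)$, hence a subsequence converges weakly in $H^1(S^1)$ to a $1/2$-harmonic limit $u_\infty$, and the gap/rigidity result for $1/2$-harmonic maps $S^1 \to S^{n-1}$ with sub-threshold energy forces $u_\infty$ to be a constant point. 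The principal obstacle throughout is technical rather than conceptual: exhibiting the $\div_{1/2}$-free cancellation in the difference equation on $S^1$---as opposed to $\R$, where Lemma~\ref{fractwenteinintroductionfornow} is originally stated---requires a careful transfer through either a stereographic change of variable or the Poisson extension into $\D$, and the algebraic manipulation that simultaneously produces the antisymmetric potential and exhibits the Jacobian cancellation for the \emph{time-dependent} system is the most delicate ingredient of the argument.
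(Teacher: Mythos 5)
Your skeleton --- existence via the piecewise-minimisation scheme of \cite{schisirewang}, energy dissipation by testing against $u_t$, uniqueness via a difference-equation estimate in which the fractional Wente inequality supplies the trace regularity $u(t,\cdot)\in\dot W^{1/2,p}$ ($p>2$) for a.e.\ $t$, and convergence via the dissipation bound together with the energy gap for half-harmonic maps into $S^{n-1}$ --- follows the paper's plan. Two remarks.

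On uniqueness, the inequality you write, $\|w(t)\|_{L^2}^2\le C\varepsilon\int_0^t\|w(s)\|_{L^2}^2\,ds$, already forces $w\equiv 0$ with no restriction on the size of $C\varepsilon$, so the caveat ``as soon as $C\varepsilon<1$'' signals that this is not quite the inequality that comes out. What actually emerges (as in the paper's Theorem~\ref{uniqueness}) is an absorption estimate with $\sup_{[0,T]}\|w\|_{L^2}^2+\int_0^T\|(-\Delta)^{1/4}w\|_{L^2}^2\,dt$ appearing on both sides and a small prefactor supplied by the smallness of $\int_0^T\int_{S^1}(|d_{1/2}u|+|d_{1/2}v|)^4\,dx\,dt$ for $T$ small, and that smallness needs exactly the $u(t)\in H^1$ trace regularity you describe. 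This part of your plan is therefore essentially right once you are careful about where the smallness enters and how the good dissipation term $\int|d_{1/2}w|^2$ is absorbed.

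The genuine gap is the regularity step. ``Iterating Lemma~\ref{fractwenteinintroductionfornow} together with parabolic smoothing to reach $C^\infty$'' is not a workable bootstrap. The Wente estimate only controls the $\div_{1/2}$-free, antisymmetric piece $\Omega\cdot d_{1/2}u$ of the nonlinearity; the remaining $T(u)$-type piece must be estimated by hand, and once $u(t)\in H^1$ is obtained the Wente lemma does not re-apply in a form that lifts you to $H^{3/2}$ and beyond. At that stage one needs genuine product estimates for $d_{1/2}u\cdot d_{1/2}v$ in $H^s(S^1)$, which the paper derives by Fourier-coefficient analysis specific to $S^1$ (Lemma~\ref{regularityestimatelemmaforhalfgradient}) and which are not a corollary of the compensated-compactness lemma. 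In fact the paper does not bootstrap the energy-class solution directly at all: it first constructs a local-in-time smooth solution from smooth initial data via a Banach-space inverse function theorem argument \`a la Hamilton (Proposition~\ref{localregprop}, with the Fredholm/maximum-principle Lemmas~\ref{lemmakernelbyfredholm} and~\ref{regularityinkernelh}), reduces general $u_0\in H^{1/2}$ to the smooth case by a fractional Schoen--Uhlenbeck approximation (Lemma~\ref{approximationlemmauhlenbeck}), and then globalizes in time via Struwe-type localized energy estimates (Lemmas~\ref{struwelemma3.1} and~\ref{struwelemma3.6} plus the adaptations of Lemmas 3.7, 3.8, 3.10 of \cite{struwe1}), with parabolic smoothing entering through the factorization $(\partial_t-(-\Delta)^{1/2})(\partial_t+(-\Delta)^{1/2})=\Delta_{t,x}$ and Hieber--Pr\"uss maximal regularity. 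Without this machinery --- or a concrete replacement for it --- the claimed smoothness of $u$ on $]0,\infty[\times S^1$ is unsupported.
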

	
	By energy class solution, we mean that $u$ possesses the following regularity:
	$$u \in L^{\infty}(\R_{+}; H^{1/2}(S^1)), \quad u_t \in L^{2}(\R_{+}; L^{2}(S^1)).$$
	
	The general strategy behind the proof of Theorem \ref{mainresultsection1} is the following: First, following the arguments in \cite{struwe1} for uniqueness, we show that uniqueness holds for slightly more regular solutions than those in energy class. Namely, we require in addition that $u \in L^{2}_{loc}(\R_{+}; H^{1}(S^1))$ and this improved regularity assumption combined with Sobolev-type embeddings for Triebel-Lizorkin spaces yields uniqueness in this class of functions. Then we establish, following the approach in \cite{riv}, that energy class solutions with monotone decreasing $1/2$-energy in time and sufficiently small initial energy are actually slightly more regular and satisfy the condition $u \in L^{2}_{loc}(\R_{+}; H^{1}(S^1))$. This gain in regularity crucially relies on the structure of an anti-symmetric potential hidden in the harmonic map equation and changes of gauge as in Rivi\`ere's seminal work \cite{riv2} adapted in a non-local framework and manifested in non-local Wente-type estimates like Lemma \ref{fractwenteinintroductionfornow} found in \cite{mazoschi}. Indeed, the emergence of an anti-symmetric potential and the resulting benefits are more apparent for $S^{n-1}$ than for general manifolds, since in this case, the potential is even $1/2$-divergence-free, a property which is in general only obtained after a change of gauge, cf. \cite{riv2}. The vanishing $1/2$-divergence actually leads to slightly better integrability properties of the potential and hence the improvement in regularity, see \cite{dalioriv2}, \cite{daliopigati} and \cite{mazoschi}.
	
	To be precise, the following regularity result will be the key point to derive uniqueness for small-energy solutions in the energy class:
	
	\begin{prop}
		Let $u$ satisfy the following regularity assumptions:
		$$u \in L^{\infty}(\R_{+}; H^{1/2}(S^1)); \quad u_t \in L^{2}(\R_{+}; L^{2}(S^{1}))$$ 
		Moreover, assume $u$ solves the half-harmonic gradient flow equation \eqref{gradflowinintroduction}. Then for almost every time $t > 0$, we have:
		$$u(t) \in H^{1}(S^1).$$
	\end{prop}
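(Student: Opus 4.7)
The plan is to fix an arbitrary time $t>0$ at which $u_t(t,\cdot)\in L^2(S^1)$ --- such times form a full-measure set by the hypothesis $u_t\in L^2(\R_+;L^2(S^1))$ --- and then regard the equation as an elliptic (in fact pseudodifferential) equation in the spatial variable alone:
\[
(-\Delta)^{1/2} u(t,\cdot) \;=\; -u_t(t,\cdot) + u(t,\cdot)\,|d_{1/2} u(t,\cdot)|^2.
\]
The right-hand side is a priori only in $L^1 + L^2$, so the goal is to use the hidden antisymmetric structure coming from the constraint $u\in S^{n-1}$, together with the fractional Wente estimate recalled in Lemma \ref{fractwenteinintroductionfornow}, to bootstrap this $L^1 + L^2$ information up to $L^2$, which is equivalent to $u(t,\cdot)\in H^1(S^1)$.

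The first step is to split the equation into its components tangent and normal to the sphere. Wedging with $u$ kills the scalar term $u\,|d_{1/2}u|^2$ and yields
\[
u\wedge(-\Delta)^{1/2}u \;=\; -\,u\wedge u_t,
\]
whose right-hand side lies in $L^2(S^1)$ at the chosen time since $|u|=1$. This is the non-local counterpart of the classical identity $u\wedge\Delta u = \operatorname{div}(u\wedge\nabla u)$ used in Rivi\`ere's work, and it is the place where the special geometry of $S^{n-1}$ --- as opposed to a general target --- provides an honest antisymmetric quantity without having to perform a change of gauge.

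Next, following the formalism of \cite{mazoschi}, I would recast $u\wedge(-\Delta)^{1/2}u$ in terms of the fractional divergence of the antisymmetric bilinear object $u\wedge d_{1/2}u$, up to a three-term-commutator error of the type $\mathcal{T}(u,u)$ that is controlled via the estimate of \cite{dalioriv} recalled in the introduction. The resulting system has the Rivi\`ere-type shape
\[
(-\Delta)^{1/4}u \;=\; \Omega\cdot(-\Delta)^{1/4}u + F,
\]
with $\Omega$ antisymmetric, built from $u$ and $d_{1/2}u$, and $F$ collecting the time-derivative contribution together with commutator remainders, all lying in $L^2$. At this stage Lemma \ref{fractwenteinintroductionfornow} applies: the antisymmetry converts the critical pairing $F\cdot d_{1/2}g$ into a Hardy-space quantity, and Hardy/BMO duality combined with $L^p$-boundedness of the relevant fractional Riesz-type multipliers upgrades $(-\Delta)^{1/4}u(t,\cdot)$ from $L^2$ to some $L^p$ with $p>2$. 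In particular $|d_{1/2}u(t,\cdot)|^2$ acquires an exponent strictly greater than $1$, and a standard bootstrap, reinjecting this improvement into $(-\Delta)^{1/2}u=-u_t+u|d_{1/2}u|^2$, successively pushes $(-\Delta)^{1/2}u(t,\cdot)$ into $L^2(S^1)$, i.e.\ $u(t,\cdot)\in H^1(S^1)$.

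The main obstacle I expect is Step~2: rigorously translating the pointwise identity $u\wedge(-\Delta)^{1/2}u=-u\wedge u_t$ into the $\operatorname{div}_{1/2}$--$d_{1/2}$ framework of \cite{mazoschi}. In the non-local setting there is no clean product rule and no genuine integration by parts, so the expected identity $\operatorname{div}_{1/2}(u\wedge d_{1/2}u)=-u\wedge u_t$ will only hold modulo commutator terms, and one must verify that these errors are subcritical in the sense that they still fit into the hypothesis of Lemma \ref{fractwenteinintroductionfornow}. Once this structural reformulation is in place, the compensation/bootstrap argument is largely routine, with the passage from improved integrability of $(-\Delta)^{1/4}u$ to membership in $H^1$ carried out using the Triebel--Lizorkin embeddings alluded to in the introduction.
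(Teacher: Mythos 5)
Your structural setup is essentially the one the paper uses. The antisymmetric object you call $u\wedge d_{1/2}u$ is, in components, exactly the $\Omega_{ik}(x,y) = u^i(x)\,d_{1/2}u^k(x,y) - u^k(x)\,d_{1/2}u^i(x,y)$ that the paper introduces, and it is indeed $\div_{1/2}$-free. The commutator error you anticipate in Step~2 is precisely the term $T(u)$ in the decomposition $(-\Delta)^{1/2}u = \Omega\cdot d_{1/2}u + T(u) + f$ derived component-wise from the pointwise identity. (The paper stays in the $d_{1/2}$-formulation of \cite{mazoschi} rather than passing to the $(-\Delta)^{1/4}$-formulation of \cite{dalioriv} that you propose; both are viable, and the paper remarks on this choice explicitly. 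There is also no need to wedge with $u$ first: the $\Omega$-$T(u)$ split is obtained directly by adding and subtracting $u^k d_{1/2}u^i\,d_{1/2}u^k$.)

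The genuine gap is in your integrability-gain step. You assert that Hardy/BMO duality together with boundedness of ``fractional Riesz-type multipliers'' upgrades $(-\Delta)^{1/4}u(t,\cdot)$ from $L^2$ to some $L^p$ with $p>2$. That is not what Lemma~\ref{fractwenteinintroductionfornow} gives you. The Wente estimate, combined with $\dot H^{1/2}\hookrightarrow BMO$, yields that $\Omega\cdot d_{1/2}u\in H^{-1/2}(S^1)$ with the critical product estimate --- i.e., an estimate at the same regularity/integrability scale, not a gain. To actually gain $L^p$-integrability of $|d_{1/2}u|$ one needs the additional perturbation argument: approximate $\Omega$ in $L^2_{od}$ by a smooth $\div_{1/2}$-free $\Omega_0$ and $u$ in $H^{1/2}$ by a smooth $\tilde u$, form the solution operator
\[
\tau(v) = v + (-\Delta)^{-1/2}\Bigl((\Omega_0-\Omega)\cdot d_{1/2}v + T(v,\tilde u - u,\tilde u - u)\Bigr),
\]
prove it is invertible on $\dot F^{1/2}_{p,2}(S^1)$ simultaneously for $p=2$ (where the Wente estimate makes the perturbation small) and for some $p>2$ (where Sobolev embeddings suffice), and then use uniqueness across these two spaces to conclude $u\in\dot F^{1/2}_{4,2}(S^1)$, whence $|d_{1/2}u|\in L^4$, $u|d_{1/2}u|^2\in L^2$, and $(-\Delta)^{1/2}u\in L^2$. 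The key is not a direct multiplier bound but this invertibility-plus-uniqueness mechanism; without it your bootstrap does not get off the ground, since feeding $H^{-1/2}$ back into the equation only returns $u\in H^{1/2}$, which you already knew.
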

	
	Combining this with a fractional Ladyzhenskaya inequality and sufficiently small energy will show $u \in L^{2}_{loc}(\R_{+}; H^{1}(S^1))$, analogous to \cite{riv}.
	
	Proving smoothness of the solution relies on bootstrap techniques and a local regularity result from \cite{hamilton} adapted to the non-local setting. Indeed, we shall use the local Inversion Theorem in order to prove existence and regularity of solutions to the flow assuming the boundary data is smooth. The resulting solution will be smooth by using results from \cite{hieber} on parabolic PDEs and maximal estimates for heat flows using operator semigroups. Then, using a generalisation of a Lemma by Schoen-Uhlenbeck \cite{schoenuhlenbeck} (our proof following the presentation in \cite{struwelecturenotes}) and the extension of the harmonic map flow as presented in \cite{struwe1} in the case of the half-harmonic map flow, we deduce regularity in general and for all times, provided the initial energy is sufficently small. The ideas follow more or less \cite{struwe1} and we indicate the most significant changes by establishing the key estimates. Lastly, convergence is obtained just like in \cite{struwe} for the harmonic map flow.
	
	We would like to point out that we could have chosen the formulation of the fractional harmonic map equation introduced in \cite{dalioriv}. However, we did choose the formulation in \eqref{gradflowinintroduction} for its analogy with \eqref{harmonicflowstruwe1}, which also inspired the current investigation into half-harmonic gradient flows.\\

	Some of the main technical difficulties we will encounter in the course this paper will concern the translation of results for the real line $\R$ into results for the unit circle $S^1$ and working with Triebel-Lizorkin spaces over $S^1$. Regarding the former difficulties, some of results of this type may be obtained by an extension procedure, others by changes of variables involving the stereographic projection which connect the $1/2$-Laplacian on the circle to the one on the real line, see e.g. \cite{dalio}, \cite{daliopigati}, \cite{millotsire}, \cite{daliomartriv}. Both approaches seem to be necessary, as there are advantages to both of them. Many of the results derived by such procedures can also be obtained directly using Triebl-Lizorkin spaces. Once all these ingredients are introduced, the proof is based on the arguments found in \cite{struwe1} as well as \cite{daliopigati}, \cite{mazoschi}, \cite{riv}.
	
	In future work, the author plans to investigate uniqueness and regularity of solutions to the fractional harmonic gradient flow with small initial energy in an arbitrary closed manifold $N \subset \R^n$ and then to expand our considerations to solutions with arbitrary initial energy. Some bubbling phenomena are expected to be observable in this case, so the more delicate analysis of this will be carried out in a future paper. A paper dealing with uniqueness and regularity in the general setting of an arbitrary closed manifold $N \subset \R^n$ is already in preparation by the author (\cite{wettstein}).\\
	
	Let us present an outline of the paper: In Section 2, we introduce some of the most important notions and structures for our proofs. In particular, this includes Triebel-Lizorkin spaces on $S^1$ and the fractional Wente-type Lemma \ref{fractionalwentelemma}. Then, in Section 3, we turn to establishing our main result. First, in Section 3.1, we show the equivalence between \eqref{gradflowintermsoforthogonality} and \eqref{gradflowinintroduction}. Then, uniqueness is treated in Section 3.2 following the presentation in \cite{riv} and \cite{struwe1}, regularity in Section 3.3 by a bootstrap trick and using the techniques and results in \cite{hamilton}, \cite{hieber}, \cite{struwe1} and finally, we discuss convergence properties in Section 3.4 following the presentation in \cite{struwe} in the case of the harmonic map flow. The Appendices complement the presentation and add some technical details.\\
	
	\textbf{Acknowledgements}
	Lastly, I would like to thank my supervisors, Prof. Francesca Da Lio and Prof. Tristan Rivi\`ere, for suggesting this problem, providing advice throughout the process of working on this paper and many very helpful comments, mathematical and structural, on various versions of this paper.

	\section{Preliminaries}
	
	We briefly introduce some of the most important notions employed throughout this paper. These concern the fractional Laplacian, fractional divergences and gradients as well as a Wente-type result for fractional div-curl-structures as seen in \cite{mazoschi}.
	
	\subsection{Fractional Laplacian and Triebel-Lizorkin Spaces}
	
	In this section, we introduce the Triebel-Lizorkin spaces on the unit circle $S^1 \subset \R^2$ and recall some properties of the fractional Laplacian. Much of the current presentation is due to \cite{schiwang} and \cite{schmeitrieb}.\\
	
	Let us recall the following first: $S^1 \simeq \R / 2 \pi \Z$ is equipped with a natural distance function given by:
	\begin{align}
		| x-y |^2 	&= | e^{ix} - e^{iy} |^2 = | e^{i(x-y)} - 1 |^2 \notag \\
				&= (\cos(x-y) - 1)^2 + \sin(x-y)^2 = 2 - 2 \cos(x-y) \notag \\
				&= 4 \sin \left( \frac{x-y}{2} \right)^2,
	\end{align}
	so we have:
	$$| x-y | = 2 \left| \sin \left( \frac{x-y}{2} \right) \right|$$
	We shall tacitly use this distance function, whenever we are working over $S^1$. Moreover, we define for any $f: S^1 \to \R$:
	$$\mathcal{D}_{s,q}(f)(x) := \left( \int_{S^1} \frac{| f(x) - f(y) |^q}{| x-y |^{sq}} \frac{dy}{| x-y |} \right)^{1/q},$$
	for all $1 \leq q < \infty$ and $0 < s < 1$. Then:
	$$\| f \|_{\dot{W}^{s,(p,q)}(S^1)} := \| \mathcal{D}_{s,q}(f)(x) \|_{L^{p}(S^1)},$$
	for every $1 \leq p \leq \infty$. If $p = q$, these spaces correspond to the usual homogeneous Gagliardo-Sobolev spaces $\dot{W}^{s,p}(S^1)$. For a presentation of the operator $\mathcal{D}_{s,q}$ and its main properties, we refer to \cite{schiwang} and the references therein.\\
	
	We denote by $\mathcal{D}'(S^1)$ the collection of distributions on $S^1$ and sometimes denote by $\mathcal{D}(S^1)$ the space $C^{\infty}(S^1)$ of smooth functions. Let us from now on denote by $\hat{f}(k)$ the $k$-th Fourier coefficient of $f$, for all $f \in \mathcal{D}'(S^1)$:
	$$\hat{f}(k) := \frac{1}{2\pi} \langle f, e^{-ikx} \rangle = \frac{1}{2\pi} f \left( e^{-ikx} \right), \quad \forall k \in \Z$$
	One may also introduce the Triebel-Lizorkin spaces for $S^1$, denoted by ${F}^{s}_{p,q}(S^1)$ in the following way for all $s \in \R$, $p,q \in [1, \infty[$:
	$$F^{s}_{p,q}(S^1) := \big{\{} f \in \mathcal{D}'(S^1) \ \big{|}\ \| f \|_{F^{s}_{p,q}} < +\infty \big{\}}$$
	Here we write:
	$$\| f \|_{F^{s}_{p,q}} := \Bigg{\|} \Bigg{\|} \left( \sum_{k \in \mathbb{Z}} 2^{js} \varphi_{j}(k) \hat{f}(k) e^{ikx} \right)_{j \in \mathbb{N}} \Bigg{\|}_{l^{q}} \Bigg{\|}_{L^{p}(S^1)},$$
	for a partition of unity $(\varphi_{j})_{j \in \mathbb{N}}$ consisting of smooth, compactly supported functions on $\R$ satisfying:
	$$\operatorname{supp} \varphi_{0} \subset B_{2}(0), \quad \operatorname{supp} \varphi_{j} \subset \{ x \in \R\ |\ 2^{j-1} \leq | x | \leq 2^{j+1} \}, \forall j \geq 1$$
	as well as:
	$$\forall k \in \mathbb{N}: \sup_{j \in \mathbb{N}} 2^{jk} \| D^{k} \varphi_j \|_{L^{\infty}} \lesssim 1$$
	The Triebel-Lizorkin spaces on $S^1$, and more generally on the $n$-torus, possess an analogous theory to the classical case of these spaces on $\R^n$, see \cite{schmeitrieb}, Chapter 3. In particular, Sobolev embeddings continue to hold (\cite{schmeitrieb} Section 3.5.5), identifications with classical spaces such as $L^{p}(S^1)$ (\cite{schmeitrieb} Section 3.5.4) and duality results (\cite{schmeitrieb} Section 3.5.6). We shall use the properties of these spaces throughout this paper and shall refer to the given reference for details. The homogeneous spaces may be defined as well by omitting the Fourier coefficient of $0$th-order and adapting the notions accordingly.\\
	
	In \cite{schiwang}, the authors prove the following result:
	
	\begin{thm}[Theorem 1.4, \cite{schiwang}]
	\label{schiwangthm1.4}
		Let $s \in (0,1)$, $p,q \in ]1, \infty[$ and $f \in L^p(\R)$. Then:
		\begin{itemize}
			\item[(i)] We know $\dot{W}^{s, (p,q)}(\R^n) \subset \dot{F}^{s}_{p,q}(\R^n)$ together with:
			\begin{equation}
				\| f \|_{\dot{F}^{s}_{p,q}(\R^n)} \lesssim \| f \|_{\dot{W}^{s, (p,q)}(\R^n)}.
			\end{equation}
			
			\item[(ii)] If $p > \frac{nq}{n + sq}$, then we also have the converse inclusion together with:
			\begin{equation}
			\label{secondpartofthm2.1}
				\| f \|_{\dot{W}^{s, (p,q)}(\R^n)} \lesssim \| f \|_{\dot{F}^{s}_{p,q}(\R^n)}.
			\end{equation}
		\end{itemize}
		The constants depend on $s, p, q, n$.
	\end{thm}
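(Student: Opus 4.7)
The plan is to compare both norms through a common dyadic Littlewood--Paley decomposition of $f = \sum_j f_j$, after first rewriting
$$\mathcal{D}_{s,q}(f)(x)^q = \int_{\R^n} \frac{|f(x) - f(x-h)|^q}{|h|^{n+sq}}\,dh$$
by the substitution $y = x - h$, so that the $\dot{W}^{s,(p,q)}$-norm becomes a first-order difference characterization directly comparable to the Triebel--Lizorkin scale. I would then match each dyadic annulus $|h| \sim 2^{-k}$ against frequency level $j$, and balance the two regimes $j \le k$ and $j > k$ using Bernstein's inequality and Peetre maximal functions respectively.

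For part (i), the inclusion $\dot{W}^{s,(p,q)} \subset \dot{F}^{s}_{p,q}$, I would represent $f_j$ via the reproducing formula $f_j(x) = \int \psi_j(h)\bigl(f(x-h) - f(x)\bigr)\,dh$, valid since $\int \psi_j = 0$ for $j \ge 1$. A weighted H\"older inequality in $h$ with a weight of the form $(1+2^j|h|)^{-N}$ yields, after a scaling computation, the pointwise bound
$$2^{js}|f_j(x)| \lesssim \left(\int_{\R^n} \frac{|f(x-h)-f(x)|^q}{|h|^{n+sq}(1+2^j|h|)^{Nq}}\,dh\right)^{1/q}.$$
Summing in $j$ and taking $L^p$-norms reduces the problem to a standard convolution-versus-maximal-function bound which is valid across the full range $1 < p, q < \infty$ and $0 < s < 1$, with no auxiliary restriction on $p$.

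For part (ii), the reverse inequality, I would estimate the integrand of $\mathcal{D}_{s,q}(f)$ directly via the decomposition $f(x) - f(x-h) = \sum_j(f_j(x)-f_j(x-h))$, splitting at the index $j = k$ determined by $|h| \sim 2^{-k}$. For $j \le k$ use the mean value theorem together with Bernstein: $|f_j(x)-f_j(x-h)| \lesssim 2^{j-k} f_j^{**}(x)$, where $f_j^{**}$ denotes the Peetre maximal function with parameter $A$; for $j > k$ use the defining bound of Peetre: $|f_j(x-h)| \lesssim 2^{(j-k)A} f_j^{**}(x)$. A weighted $\ell^q$-H\"older step in $j$ followed by a discrete Hardy-type summation in $k$, which converges precisely when $A > s$, produces the pointwise estimate
$$\mathcal{D}_{s,q}(f)(x) \lesssim \left(\sum_j 2^{jsq}(f_j^{**}(x))^q\right)^{1/q}.$$
Taking $L^p$-norms and applying the pointwise Peetre bound $f_j^{**} \lesssim M_{n/A}(f_j)$ in combination with the Fefferman--Stein vector-valued maximal inequality for $M_{n/A}$ in $L^p(\ell^q)$ concludes this direction.

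The main obstacle is the calibration of the Peetre exponent $A$ in part (ii). The Fefferman--Stein step requires $n/A < \min(p,q)$, whereas the geometric-series convergence above forces $A > s$. When $p \ge q$ both constraints are compatible without further restriction, but when $p < q$ the additional scaling in the Peetre/maximal step makes the sharp admissible range coincide with $p(n + sq) > nq$, i.e., the hypothesis $p > nq/(n+sq)$ of the theorem. Pinning down this exact threshold and verifying that the H\"older and discrete Hardy balancing can be closed \emph{uniformly} in $j$ at this critical exponent is the principal technical point of the argument.
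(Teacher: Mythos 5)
Your overall scheme for part (ii) --- dyadic annuli $|h|\sim 2^{-k}$ against Littlewood--Paley frequencies $j$, Peetre maximal functions, Fefferman--Stein --- matches the argument of \cite{schiwang} that the paper reproduces on $S^1$ in Appendix B.2; Lemma~\ref{estimatefortrigpoly} there is exactly your Peetre estimate with $A=n/r$. The paper does not reprove part (i), and your sketch for it is the standard route. For part (ii), however, there are two concrete problems in your treatment of the $j>k$ regime, and they land exactly on the step you flag at the end as the ``principal technical point''.

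First, the sign in the Hardy-type summation is reversed. After the weighted $\ell^q$-H\"older step in $j$ one arrives at
\[
\sum_k 2^{ksq}\Big(\sum_{j>k}2^{(j-k)A}a_j\Big)^q\ \lesssim\ \sum_j a_j^q\,2^{j\delta q}\sum_{k<j}2^{k(s-\delta)q},
\]
where $\delta$ is an auxiliary parameter with $\delta>A$ (so that $\sum_{j>k}2^{(j-k)(A-\delta)q'}$ converges). Since $k$ runs to $-\infty$, the inner sum converges only when $\delta<s$, in which case it is $\sim 2^{j(s-\delta)q}$ and one recovers $\sum_j 2^{jsq}a_j^q$. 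So the geometric step forces $A<\delta<s$, i.e.\ $A<s$, not $A>s$; testing with $a_j=\delta_{j,j_0}$ makes the direction unambiguous.

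Second, and more fundamentally: with the corrected sign your constraints read $A<s$ and $n/A<\min(p,q)$, i.e.\ $n/\min(p,q)<A<s$, which requires $\min(p,q)>n/s$ --- a condition strictly stronger than $p>nq/(n+sq)$, since $nq/(n+sq)<n/s$ always. So the pointwise estimate $\mathcal{D}_{s,q}(f)(x)\lesssim\big(\sum_j 2^{jsq}(f_j^{**}(x))^q\big)^{1/q}$ that your outline aims for cannot yield the sharp range, no matter how $A$ is chosen. The missing idea is that $\mathcal{D}_{s,q}$ carries an $L^q$-average of $|f_j(y)|^q$ over the annulus $A_k(x)$, and one should not discard this by a pointwise sup bound: for $r\le q$ write
\[
\int_{A_k(x)}|f_j(y)|^q\,dy\ \le\ \Big(\sup_{A_k(x)}|f_j|\Big)^{q-r}\int_{A_k(x)}|f_j(y)|^r\,dy\ \lesssim\ 2^{-kn}\,2^{(j-k)n(q-r)/r}\,\big(M|f_j|^r(x)\big)^{q/r},
\]
using the Peetre/maximal bound only on the sup-factor. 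This replaces $A=n/r$ by $A=n(1/r-1/q)$, so that $A<s$ becomes $r>nq/(sq+n)$; combined with the Fefferman--Stein requirement $r<\min(p,q)$, and noting that $nq/(sq+n)<q$ always, the condition is nonvacuous precisely when $p>nq/(n+sq)$. This is the calibration your final paragraph gestures at but does not carry out; as written, your outline would only give the suboptimal range $\min(p,q)>n/s$.
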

	
	As seen in \cite{schiwang} and by using the properties in \cite{schmeitrieb}, \cite{triebel} for periodic functions, we can similarily discover the following equivalence with Triebel-Lizorkin spaces for all $1 < q < \infty$ and $1 < p < \infty$:
	\begin{equation}
		\dot{W}^{s, (p,q)}(S^1) = \dot{F}^{s}_{p,q}(S^1),
	\end{equation}
	with equivalence of the corresponding seminorms, provided $p > \frac{q}{1+sq}$. We shall prove the part of the identification that we will be using over and over, i.e. the second part of Theorem \ref{schiwangthm1.4}, in Appendix B. If $s = 1/2$ and $q = 2$, then $p > 1$ is the requirement in Theorem \ref{schiwangthm1.4} for the equality of $\dot{F}^{1/2}_{p,2}$ and $\dot{W}^{1/2, (p,2)}$ to hold. It should be observed that while $\dot{F}^{s}_{p,2}(S^1) \subset \dot{W}^{s,p}(S^1) = \dot{W}^{s,(p,p)}(S^1)$ for $p \geq 2$, there does not hold equality except for $p =2$. The arguments for the domain $S^{1}$ carry out in complete analogy to the case treated in Theorem 1.4 of \cite{schiwang}, where all the spaces are introduced over $\R$ and $\R^n$, by using the theory in \cite{schmeitrieb}. One just has to observe that the maximal function estimates used are also available on $S^1$, see Section 3.3.5 and 3.4 in \cite{schmeitrieb}, enabling the very same arguments to work. Therefore, Theorem \ref{schiwangthm1.4} continues to hold for $S^1$. We shall sometimes omit mention of the domain, if it is clear from the context.\\
	
	On $S^1$, the fractional $s$-Laplacian is defined as a Fourier multiplier operating on Fourier series:
	$$\widehat{(-\Delta)^{s} f}(k) = | k |^{2s} \hat{f}(k),$$
	for every $k \in \mathbb{Z}$ and all $0 < s < 1$. In particular, this can also be phrased as a principal value:
	$$(-\Delta)^{s} f(x) = C(s) \cdot P.V. \int_{S^{1}} \frac{f(x) - f(y)}{| x-y |^{1+2s}} dy,$$
	where $C(s) > 0$ denotes some constant depending on $s$. By the Fourier multiplier properties, fractional Laplacians interact in a natural way with Triebel-Lizorkin spaces $\dot{F}^{s}_{p,q}(S^1)$, as is usual for this type of function spaces. This means that it induces an isomorphism:
	$$(-\Delta)^{s}: \dot{F}^{t+2s}_{p,q} \to \dot{F}^{t}_{p,q},$$
	for all $p,q \in (1, \infty)$ and $t, t+2s \in \R$, see \cite{schmeitrieb} Section 3.6.3 and the proof of the analogous statement in the case $\R^n$.\\
	
	In analogy, the $s$-Laplacian can be defined on $\R$ as a Fourier multiplier using the Fourier transform rather than the Fourier series and leads again to an object which can also be characterised by a similar principal value. We omit the details, as the formulas are virtually the same as for the circle.

	\subsection{Fractional Gradients and Divergences}
	
	We present some of the notions introduced and studied in \cite{mazoschi}: We denote by $\mathcal{M}_{od}(\R \times \R)$ the collection of measurable functions $f: \R \times \R \to \R$ with respect to the measure $\frac{dx dy}{ | x-y |}$ and we do the same for $S^1$ instead of $\R$ on the domain-side. If both domains are possible, we shall merely denote this space by $\mathcal{M}_{od}$. For a measurable function $f: \R \to \R$ or $f: S^1 \to \R$, we define for $0 \leq s < 1$ the fractional $s$-gradient as follows:
	$$d_s f(x,y) = \frac{f(x) - f(y)}{| x-y |^s} \in \mathcal{M}_{od},$$
	and the corresponding $s$-divergence by means of duality. It is immediately clear, but nevertheless useful to observe:
	$$d_{s} f(y,x) = - d_{s} f(x,y)$$
	Observe that by duality, for $F \in \mathcal{M}_{od}(\R \times \R)$ or $F \in \mathcal{M}_{od}(S^1 \times S^1)$, we define for every $\varphi$ smooth and compactly supported on $\R$ or just smooth on $S^1$ in the latter case:
	$$\div_{s} F (\varphi) := \int \int F(x,y) d_{s} \varphi(x,y) \frac{dx dy}{| x-y |}$$
	This quantity is hence defined merely in a distributional sense. Lastly, we denote for $F, G \in \mathcal{M}_{od}$ over $\R$ or $S^1$:
	$$F \cdot G (x) := \int F(x,y) G(x,y) \frac{dy}{| x-y |}$$
	If $F = G$, we also write:
	$$F \cdot F(x) = | F |^{2}(x) \Rightarrow | F |(x) := \sqrt{F \cdot F(x)}$$
	Therefore, we immediately have:
	$$\| | d_{s} f | \|_{L^{p}(S^1)} = \| f \|_{\dot{W}^{s,(p,2)}(S^1)},$$
	which hints at an intimate connection between Triebel-Lizorkin spaces $\dot{F}^{s}_{p,q}(S^1)$ and fractional gradient $d_{s}$, under some technical conditions on $s,p,q$. We highlight that:
	$$(- \Delta)^{s} f = \div_{s} d_s f,$$
	which is particularily useful for the weak formulation of PDEs involving non-local operators. This equation is to be understood in the following sense:
	$$\int d_s f \cdot d_s g (x) dx = \int (-\Delta)^{s} f \cdot g dx = \int (-\Delta)^{s/2} f \cdot (-\Delta)^{s/2} g dx,$$
	for the domains $S^1$ and $\R$. Lastly, the following identity, sometimes referred to as fractional Leibniz' rule, is often useful:
	$$d_{s} \left( fg \right)(x,y) = d_{s} f(x,y) g(x) + f(y) d_{s} g(x,y)$$
	This identity can be verified by directly inserting the definition.\\
	
	In general, we may also introduce $L^{p}_{od}(S^1 \times S^1)$ or $L^{p}_{od}(\R \times \R)$ as the collection of measurable functions, such that the following norm is finite:
	$$\| F \|_{L^{p}_{od}} := \left( \int \int | F(x,y) |^{p} \frac{dy dx}{| x-y |} \right)^{1/p},$$
	for $1 \leq p < \infty$. The space $L^{\infty}_{od}(S^1 \times S^1)$ and $L^{\infty}_{od}(\R \times \R)$ could be introduced in the usual manner.\\
	
	One of the main results we shall be using later on in an appropriately modified formulation is the following non-local Wente-type result:
	
	\begin{lem}[Theorem 2.1, \cite{mazoschi}]
	\label{fractionalwentelemma}
		Let $s \in (0,1)$ and $p \in (1,\infty)$. For $F \in L^{p}_{od}(\R \times \R)$ and $g \in \dot{W}^{s,p'}(\R)$, where $p'$ denotes the H\"older dual of $p$, we assume that $\div_s F = 0$. Then $F \cdot d_s g$ lies in the Hardy space $\mathcal{H}^{1}(\R)$\footnote{We briefly recall that the Hardy space $\mathcal{H}^{1}(\R)$ is the subspace of $L^{1}(\R)$-functions such that: $$M_{\Phi}(f)(x) := \sup_{t > 0} | \Phi_{t} \ast f |(x) \in L^{1}(\R),$$ where $\Phi$ is a Schwartz function on $\R$ with $\int \Phi dx = 1$ and $\Phi_{t}(x) = 1/t \cdot  \Phi(x/t)$. Alternative characterisations using boundary values of harmonic maps, as the dual of $BMO(\R)$ and by the theory of function spaces exist. Hardy spaces are of interest, as they remedy some of the issues that appear when working with $L^{1}$-functions.} and we have the estimate:
		$$\| F \cdot d_s g \|_{\mathcal{H}^{1}(\R)} \lesssim \| F \|_{L^{p}_{od}(\R \times \R)} \cdot \| g \|_{\dot{W}^{s,p'}(\R)}.$$
	\end{lem}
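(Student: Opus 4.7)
The plan is to follow the Coifman--Lions--Meyer--Semmes template for div-curl quantities, adapted to the non-local setting. The $L^1$ estimate $\|F \cdot d_s g\|_{L^1(\R)} \lesssim \|F\|_{L^p_{od}} \|g\|_{\dot{W}^{s,p'}}$ would follow immediately from H\"older's inequality in the $L^p_{od} \times L^{p'}_{od}$ pairing, together with the identification $\|d_s g\|_{L^{p'}_{od}} = \|g\|_{\dot{W}^{s,p'}}$. To upgrade this to an $\mathcal{H}^1$-bound I would invoke the grand maximal function characterisation of the Hardy space: it suffices to show $\sup_{t>0}|\Phi_t \ast (F \cdot d_s g)| \in L^1(\R)$ for some fixed Schwartz $\Phi$ with $\int \Phi = 1$.

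The core step is an integration-by-parts identity that would replace the classical $F \cdot \nabla g = \div(gF)$. Writing out $\Phi_t \ast (F \cdot d_s g)(x_0)$ by Fubini, setting $\varphi(z) := \Phi_t(x_0 - z)$, and invoking the fractional Leibniz rule
\begin{equation*}
\varphi(z)\, d_s g(z,y) = d_s(\varphi g)(z,y) - g(y)\, d_s \varphi(z,y),
\end{equation*}
one rewrites the convolution as $\div_s F(\varphi g) - \iint F(z,y)\, g(y)\, d_s\varphi(z,y)\, \frac{dy\,dz}{|z-y|}$. The first term vanishes by the hypothesis $\div_s F = 0$; the same reasoning allows replacement of $g(y)$ by $g(y) - c$ for any constant $c$. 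Choosing $c := g_{B(x_0, t)}$ and applying H\"older in $L^p_{od} \times L^{p'}_{od}$ then reduces the task to a pointwise bound on $\|(g - c)\, d_s \varphi\|_{L^{p'}_{od}}$ in terms of a maximal function of $g$ at $x_0$.

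This reduction I would carry out by splitting the $(z,y)$-integration into the near-diagonal region (both variables within scale $t$ of $x_0$) and the off-diagonal tails. On the near-diagonal piece, the mean-value estimate $|d_s\varphi| \lesssim t^{-2}\, |z-y|^{1-s}$ combined with a fractional Poincar\'e inequality for $g - g_{B(x_0,t)}$ yields a factor controlled by a sharp $(s,p')$-maximal function of $g$ at $x_0$. On the off-diagonal pieces, the coarser bound $|d_s \varphi(z,y)| \lesssim |z-y|^{-s}(|\varphi(z)| + |\varphi(y)|)$ together with the Schwartz decay of $\Phi$ should produce summable contributions. Combined with a parallel $L^p_{od}$-type maximal bound on $F$, the goal is a pointwise estimate
\begin{equation*}
\sup_{t > 0}|\Phi_t \ast (F \cdot d_s g)(x_0)| \lesssim M_p(F)(x_0) \cdot M_{s,p'}(g)(x_0),
\end{equation*}
after which H\"older in $L^p \times L^{p'}$ together with $L^p$-boundedness of the two maximal operators closes the estimate. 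The principal obstacle is the genuine non-locality of $d_s \varphi$: unlike $\nabla \varphi$, it is supported throughout $\R \times \R$ with only Schwartz decay, so the dyadic balancing of near-diagonal Poincar\'e contributions against off-diagonal tails must be executed carefully to avoid logarithmic losses in the final maximal-function product.
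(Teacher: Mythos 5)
The paper does not give its own proof of this lemma: it is quoted verbatim from \cite{mazoschi} (whose Theorem~2.1 it is) and used as a black box; even the $S^1$-version in Appendix~B is obtained by pulling back to the real line and invoking the $\R$-version as given. So your proposal must stand on its own as a reconstruction of the argument in \cite{mazoschi}, and at a structural level it does: the fractional Leibniz identity, the use of $\div_s F = 0$ to pass from $g$ to $g-c$ (with $c$ a local mean of $g$), and the dyadic decomposition around $x_0$ at scale $t$ are the right ingredients, and the two pointwise bounds on $d_s\varphi$ you quote are correct.

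The gap is in the closing step, and it is different from the obstacle you flag. You claim a pointwise bound $\sup_t|\Phi_t\ast(F\cdot d_s g)(x_0)|\lesssim M_p(F)(x_0)\cdot M_{s,p'}(g)(x_0)$ and assert that H\"older in $L^p\times L^{p'}$ together with $L^p$-boundedness of the two maximal operators finishes. But the operator $f\mapsto (M|f|^p)^{1/p}$ is \emph{not} bounded on $L^p$ --- that would amount to $M$ being bounded on $L^1$ --- and the same failure occurs for the $g$-side operator at exponent $p'$; the naive H\"older pairing of the two maximal functions therefore does not close. This is exactly the difficulty already present in the classical Coifman--Lions--Meyer--Semmes argument you are adapting. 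The resolution is an exponent chase: on each dyadic shell one must apply H\"older with exponents $q<p$ and $q''=q/(q-1)>p'$, and bound $\|g-g_{B_R}\|_{L^{q''}(B_R)}$ not by a plain fractional Poincar\'e estimate at exponent $p'$ but by a fractional \emph{Sobolev--Poincar\'e} inequality that gains integrability, $\|g-g_{B_R}\|_{L^{q''}(B_R)}\lesssim R^{s+1/q''-1/r}[g]_{W^{s,r}(B_R)}$ with $r<p'$. The shell powers of $R$ still balance since $1/q+1/q''=1$, and one now ends up with $(M|F|^q)^{1/q}$ and a fractional maximal function of $g$ built at exponent $r<p'$, both bounded on $L^p$ and $L^{p'}$ respectively because the exponents are strictly sub-critical. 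Without this Sobolev gain, your final estimate cannot be integrated. By contrast, the non-locality of $d_s\varphi$ that you single out as the principal obstacle is comparatively benign: the Schwartz decay of $\Phi$ kills the off-diagonal tails, and the telescoping of means across shells only produces a logarithmic factor absorbed by that decay.
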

	
	If, for example $s = 1/2$ and $p = p' = 2$, then we may also conclude that $F \cdot d_s g \in H^{-1/2}(\R)$ using the embedding of $\dot{H}^{1/2}(\R)$ into $BMO(\R)$. The estimate continues to hold in a similar manner. Similarily, we may deduce the following for the domain $S^1$:
	
		\begin{lem}
	\label{fractionalwentelemmaons1insec2}
		For $F \in L^{2}_{od}(S^1 \times S^1)$ and $g \in \dot{H}^{1/2}(S^1)$, we assume that $\div_{1/2} F = 0$. Then $F \cdot d_{1/2} g$ lies in the space $H^{-1/2}(S^1)$ and we have the estimate:
		$$\| F \cdot d_{1/2} g \|_{{H}^{-1/2}(S^1)} \lesssim \| F \|_{L^{2}_{od}(S^1 \times S^1)} \cdot \| g \|_{\dot{H}^{1/2}(S^1)}.$$
	\end{lem}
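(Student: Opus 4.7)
The natural strategy is exactly the one sketched in the remark preceding the statement for the case of $\R$: first upgrade the conclusion to a Hardy-space bound
\[
\|F \cdot d_{1/2} g\|_{\mathcal{H}^1(S^1)} \lesssim \|F\|_{L^2_{od}(S^1 \times S^1)} \|g\|_{\dot{H}^{1/2}(S^1)},
\]
which is the direct $S^1$-analog of Lemma \ref{fractionalwentelemma}, and then combine it with the Sobolev embedding $\dot{H}^{1/2}(S^1) \hookrightarrow BMO(S^1)$ together with the duality $(\mathcal{H}^1(S^1))^{\ast} = BMO(S^1)$. Applied to any smooth test $\varphi$ on $S^1$, this chain yields
\[
\bigl| \langle F \cdot d_{1/2} g, \varphi \rangle \bigr| \lesssim \|F \cdot d_{1/2} g\|_{\mathcal{H}^1(S^1)} \|\varphi\|_{BMO(S^1)} \lesssim \|F\|_{L^2_{od}} \|g\|_{\dot{H}^{1/2}} \|\varphi\|_{\dot{H}^{1/2}},
\]
and the desired $H^{-1/2}(S^1)$-membership and norm bound then follow from the density of smooth functions in $H^{1/2}(S^1)$.

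For the Hardy-space estimate itself, I would transpose the proof of Lemma \ref{fractionalwentelemma} from \cite{mazoschi} to the periodic setting. All algebraic ingredients survive verbatim: one may replace $F$ by its antisymmetric part $F^{as}(x,y) := (F(x,y) - F(y,x))/2$ at the cost of only a factor $2$ in $\|\cdot\|_{L^2_{od}}$, because $d_{1/2} g$ is antisymmetric and $\div_{1/2} F^{as} = 0$ is preserved; the fractional Leibniz rule
\[
d_{1/2}(g\varphi)(x,y) = \varphi(x)\, d_{1/2} g(x,y) + g(y)\, d_{1/2} \varphi(x,y)
\]
still moves a $1/2$-derivative from $g$ onto a test function; and $\div_{1/2} F = 0$ still annihilates the $d_{1/2}(g\varphi)$-term on any admissible pair. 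The analytic ingredients, namely Littlewood--Paley decomposition, vector-valued maximal function estimates, paraproduct bounds and the atomic characterization of $\mathcal{H}^1$, all have torus analogs documented in \cite{schmeitrieb} and were already tacitly invoked at the end of Section 2.1, so the real-line proof carries over with purely notational modifications.

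The main obstacle I anticipate is not the algebra but the careful bookkeeping of zero-frequency modes in the compact setting. One must distinguish the homogeneous and inhomogeneous variants of the Hardy and $BMO$ spaces on $S^1$ (they differ by constants), and one must verify that the test function $g\varphi$ used to invoke $\div_{1/2} F = 0$ truly lies in $\dot{H}^{1/2}(S^1)$; this is obtained by approximating $\varphi$ by smooth functions and exploiting the fact that $\dot{H}^{1/2}(S^1) \cap L^\infty(S^1)$ is an algebra. Should these compactness issues prove awkward, a safe fallback is to cover $S^1$ by finitely many small arcs, localize $F$ and $g$ via a partition of unity, transport each piece to $\R$ by a local chart in order to apply the existing real-line version Lemma \ref{fractionalwentelemma}, and then reassemble. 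The cutoff errors produced by this localization involve kernels regular away from the diagonal and are readily absorbed into $\|F\|_{L^2_{od}(S^1 \times S^1)} \|g\|_{\dot{H}^{1/2}(S^1)}$.
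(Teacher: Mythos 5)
Your proposal is correct in broad strokes but differs from the paper's route, and one of your two suggested paths has a non-trivial step that you dismiss a bit too quickly.

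The paper does not re-prove the Hardy-space estimate on the torus. Instead it pulls the statement back to $\R$ through the stereographic projection $\Pi$ of Appendix A (Proposition \ref{connectioncircline}), which is the conformal map compatible with the $1/2$-Laplacian. Concretely, given $F$ and $g$ on $S^1$, it introduces a weighted push-forward $\tilde F$ on $\R\times\R$ (weights of the form $|\sin(\tfrac{\cdot}{2}+\tfrac{\pi}{4})|^{1/2}$ absorbing the Jacobians) such that $\|\tilde F\|_{L^2_{od}(\R\times\R)} = \|F\|_{L^2_{od}(S^1\times S^1)}$ and, crucially, $\div_{1/2}\tilde F = 0$ on $\R$ follows directly from $\div_{1/2} F = 0$ on $S^1$ by the change of variables; no correction term appears. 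One then applies Lemma \ref{fractionalwentelemma} on $\R$ verbatim, uses $\dot H^{1/2}(\R)\hookrightarrow BMO(\R)$, and pulls back. The finitely many arcs enter only because a single stereographic projection misses one point: the partition of unity is applied to the test function $\varphi$, not to $F$ or $g$, so the divergence-free structure of $F$ is never disturbed.

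This is the key place where your fallback diverges. You propose to localize $F$ and $g$ by cutoffs and transport the pieces to $\R$ by generic local charts. But multiplying $F$ by a cutoff $\chi$ destroys $\div_{1/2}(\chi F)=0$: by the fractional Leibniz rule one picks up the commutator $\int F(x,y)\,\varphi(y)\,d_{1/2}\chi(x,y)\,\frac{dx\,dy}{|x-y|}$, which must either be estimated separately or removed by solving a Hodge-type equation $(-\Delta)^{1/2} h = \div_{1/2}(\chi F)$ and replacing $\chi F$ by $\chi F - d_{1/2} h$. Neither step is automatic, and the phrase "readily absorbed" skips over it. The paper sidesteps this entirely by using a single global-minus-one-point chart that leaves $F$ and $g$ untouched. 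If you do go the localization route, you should carry out that commutator estimate explicitly; it is manageable (the kernel $d_{1/2}\chi$ is $O(|x-y|^{1/2})$ near the diagonal since $\chi$ is Lipschitz, so the error lands in $L^2\subset H^{-1/2}$) but it is a genuine extra lemma, not a remark.

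Your primary strategy, a direct torus proof by transposing the Littlewood--Paley and paraproduct machinery of \cite{mazoschi}, is a legitimate alternative and would arguably be more self-contained, provided you verify the torus versions of the vector-valued maximal inequality, the atomic $\mathcal H^1(S^1)$ decomposition, and the $\mathcal H^1$--$BMO$ duality with the homogeneous/inhomogeneous normalization kept consistent. The paper chose the stereographic route precisely because these verifications, while standard, are lengthy, whereas the conformal covariance of $d_{1/2}$ and $\div_{1/2}$ under $\Pi$ makes the transfer essentially a one-line computation. If the direct torus route is what you want, the zero-mode bookkeeping you flag is the right thing to worry about: one should reduce to $g$ with $\hat g(0)=0$ (which $d_{1/2}$ allows for free) and use the homogeneous $\dot H^{1/2}$ and $\dot{\mathcal H}^1$ throughout.
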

	
	The proof of this result is postponed to Appendix B.

	\section{The Fractional Harmonic Flow with Values in $S^{n-1}$}
	
	This section is devoted to the proof of our main result. For convenience's sake, we restate it once more:
	
	\begin{thm}
	\label{mainresultsection3}
		Let $u_0 \in H^{1/2}(S^1;S^{n-1})$ be any initial data. There exists $\varepsilon > 0$, such that if:
		$$\| ( - \Delta)^{1/4} u_0 \|_{L^2(S^1)} \leq \varepsilon,$$
		then there exists a unique energy class solution $u: \R_{+} \times S^1 \to S^{n-1} \subset \R^n$ of the weak fractional harmonic gradient flow:
		$$u_t + (-\Delta)^{1/2} u = u | d_{1/2} u |^2,$$
		satisfying $u(0, \cdot) = u_0$ and the energy decay estimate:
		$$\| (-\Delta)^{1/4} u(t) \|_{L^2 (S^1)} \leq \| (-\Delta)^{1/4} u_0 \|_{L^{2}(S^1)}.$$
		In fact, $u$ is even smooth and for an appropriate subsequence $t_k \to \infty$, the sequence $u(t_k)$ converges weakly in $H^{1}(S^1)$ to a point.
	\end{thm}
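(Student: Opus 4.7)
The plan is to split the theorem into three pieces: existence of an energy class solution, uniqueness under the smallness hypothesis, and long-time regularity together with convergence. Existence is essentially already available: the construction of Schikorra-Sire-Wang \cite{schisirewang}, valid for closed homogeneous targets, carries over verbatim from a bounded interval to $S^1$ and provides a map $u \in L^\infty(\R_+; H^{1/2}(S^1))$ with $u_t \in L^2(\R_+; L^2(S^1))$ satisfying $d\pi(u)(u_t + (-\Delta)^{1/2} u) = 0$ together with the energy decay. By the equivalence to be established in Section~3.1, such a $u$ solves $u_t + (-\Delta)^{1/2} u = u |d_{1/2} u|^2$. The real work lies in uniqueness and regularity.

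For uniqueness I would first prove, following the Struwe-type argument of \cite{struwe1}, uniqueness in the smaller class of energy solutions that additionally satisfy $u \in L^2_{loc}(\R_+; H^1(S^1))$. Given two such solutions $u_1, u_2$ with the same initial data, I would test the difference of the equations against $w = u_1 - u_2$ and estimate the quadratic nonlinearities $u|d_{1/2} u|^2$ using the Triebel-Lizorkin identification $\dot{F}^{1/2}_{p,2}(S^1) \simeq \dot{W}^{1/2,(p,2)}(S^1)$ from Theorem \ref{schiwangthm1.4} together with Sobolev embedding in the $S^1$ scale; Gronwall closes the loop.

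The crux is then to upgrade any energy class solution with small initial energy to the class $L^2_{loc}(\R_+; H^1(S^1))$, which is exactly the content of the intermediate Proposition stated in the introduction. Here I would exploit the antisymmetric structure specific to $S^{n-1}$: differentiating $|u|^2 = 1$ yields $u \cdot d_{1/2} u \equiv 0$ pointwise in $(x,y)$, so that the right-hand side $u|d_{1/2}u|^2$ can be rewritten as a product $F \cdot d_{1/2} u$ for an $L^2_{od}$-tensor $F$ built out of $u$ and $d_{1/2} u$ which is $1/2$-divergence-free (no gauge change required, thanks to $N = S^{n-1}$). At each time $t$, the fractional Wente-type Lemma~\ref{fractionalwentelemmaons1insec2} then gives $u|d_{1/2}u|^2 \in H^{-1/2}(S^1)$ with norm bounded by $\|u\|_{\dot H^{1/2}}^2$; combined with $u_t(t,\cdot) \in L^2$ for a.e.\ $t$ and the isomorphism $(-\Delta)^{1/2}: \dot{F}^1_{2,2} \to \dot{F}^0_{2,2}$, one concludes $u(t,\cdot) \in H^1(S^1)$ for a.e.\ $t$. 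The smallness $\|(-\Delta)^{1/4} u_0\|_{L^2} \leq \varepsilon$ enters to absorb a factor $\|u\|_{\dot H^{1/2}}^2$ on the right-hand side into the left. A fractional Ladyzhenskaya-type interpolation of the form $\|u\|_{H^{3/4}}^2 \ls \|u\|_{H^{1/2}} \|u\|_{H^1}$ then promotes the pointwise-in-$t$ bound to membership in $L^2_{loc}(\R_+; H^1(S^1))$, meeting the hypothesis of the uniqueness step.

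For smoothness on $]0,\infty[ \times S^1$ I would iterate the Wente-based regularity gain as in \cite{daliopigati, mazoschi} combined with maximal regularity for $\partial_t + (-\Delta)^{1/2}$ in the spirit of \cite{hieber} and the local-inversion argument of \cite{hamilton} to produce smooth solutions to the linearised problem near smooth data, then extend globally via the Schoen-Uhlenbeck and Struwe-type extension procedures. Convergence as $t \to \infty$ follows the classical Struwe pattern \cite{struwe}: from $u_t \in L^2(\R_+; L^2)$ one extracts $t_k \to \infty$ with $u_t(t_k) \to 0$ in $L^2$, so that any weak $H^{1/2}$-limit of $u(t_k)$ is a $1/2$-harmonic map $S^1 \to S^{n-1}$ of energy at most $\varepsilon^2$; for $\varepsilon$ small enough, $\varepsilon$-regularity for $1/2$-harmonic maps \cite{dalioriv} forces the limit to be constant. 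The main obstacle throughout is the Wente regularity upgrade: one must carefully engineer the bilinear decomposition of $u|d_{1/2} u|^2$ so that the $1/2$-divergence-free condition on $F$ is genuinely satisfied and so that Lemma~\ref{fractionalwentelemmaons1insec2} applies directly on $S^1$, rather than only after transfer to $\R$ by the stereographic projection.
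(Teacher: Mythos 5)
Your overall strategy is exactly the one the paper follows: existence from Schikorra--Sire--Wang, uniqueness first in the class $L^2_{loc}(\R_+; H^1(S^1))$ via a Struwe-type Gronwall argument, then an upgrade of small-energy energy-class solutions into that class via the fractional Wente lemma, then a Hamilton/Hieber/Schoen--Uhlenbeck/Struwe bootstrap for smoothness and the usual Struwe convergence argument. So in structure the two proofs agree. However, the key step has a genuine gap in how you state it.

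You claim that $|u|^2=1$ implies $u \cdot d_{1/2} u \equiv 0$ pointwise in $(x,y)$, and that consequently $u\,|d_{1/2}u|^2 = F \cdot d_{1/2} u$ for an antisymmetric, $1/2$-divergence-free $F$. Both assertions are false as stated. What $|u|^2=1$ actually gives is the \emph{symmetrized} orthogonality
$\big(u(x)+u(y)\big)\cdot d_{1/2} u(x,y) = 0$,
while $u(x)\cdot d_{1/2}u(x,y) = \big(1-u(x)\cdot u(y)\big)/|x-y|^{1/2} = \tfrac12 |d_{1/4}u(x,y)|^2 \neq 0$. Precisely because of this non-vanishing, the paper's decomposition has \emph{two} pieces:
$$u^i\,|d_{1/2}u|^2 = \Omega_{ik}\cdot d_{1/2}u^k + T^i(u), \qquad \Omega_{ik}(x,y) = u^i(x)\,d_{1/2}u^k(x,y) - u^k(x)\,d_{1/2}u^i(x,y),$$
with $\Omega$ antisymmetric and $1/2$-divergence-free, and an extra cubic term $T^i(u) = \tfrac12 \int d_{1/2}u^i\,|d_{1/4}u|^2\,\frac{dy}{|x-y|}$ that is \emph{not} of the form ``divergence-free tensor times $d_{1/2}u$'' and so is not handled by the Wente lemma. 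The Wente estimate only controls $\Omega\cdot d_{1/2}v$; the $T$-term requires its own H\"older/Triebel--Lizorkin estimates (Theorem~\ref{schiwangthm1.4}, duality, and the Sobolev scale) together with the smooth-approximation and small-perturbation argument on the solution operator $\tau$ in Lemma~\ref{reglemma}. By omitting $T(u)$ you are implicitly assuming that the entire right-hand side enjoys Wente-level cancellation, which it does not.

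A secondary imprecision: once you have $\|u(t)\|_{H^1} \lesssim 1 + \|u_t(t)\|_{L^2}$ for a.e.\ $t$, the conclusion $u \in L^2_{loc}(\R_+; H^1(S^1))$ follows simply by integrating in $t$ and using $u_t \in L^2(\R_+; L^2)$; the interpolation $\|u\|_{H^{3/4}}^2 \lesssim \|u\|_{H^{1/2}}\|u\|_{H^1}$ does not "promote" this to $H^1$-integrability and is not what is needed here. The Ladyzhenskaya-type estimate that actually enters (Lemma~\ref{ladys1}) is $\|u\|_{L^4} \lesssim \|u\|_{L^2}^{1/2}\|u\|_{H^{1/2}}^{1/2}$, used inside the Gronwall step and in bounding $\| |d_{1/2}u|^2\|_{L^2}$.
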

	
	We observe that existence is already clear due to the result in \cite{schisirewang}. Therefore, it remains to check uniqueness, regularity and convergence for $t \to \infty$. We shall treat each of these three different aspects in a separate subsection.
	
	\subsection{The $1/2$-Harmonic Gradient Flow Equation}
	
	First, we would like to prove the equivalence of the formulations in \eqref{gradflowintermsoforthogonality} and \eqref{gradflowinintroduction}. To do this, we assume that $u \in L^{\infty}(\R_{+}; H^{1/2}(S^1))$ and $u_t \in L^2 (\R_{+}; L^{2}(S^1))$ is a solution of \eqref{gradflowintermsoforthogonality} such that $u(t,x) \in S^{n-1}$ for almost every $(t,x) \in \R_{+} \times S^1$. Therefore, it satisfies the following equation:
	$$d \pi(u) \left( u_t + (-\Delta)^{1/2} u \right) = 0,$$
	where $\pi: \R_{n} \setminus \{ 0 \} \to S^{n-1}, x \mapsto x / |x|$ is the closest point projection to $S^{n-1}$. This means:
	$$\int_{0}^{\infty} \int_{S^1} \left( u_t + (-\Delta)^{1/2} u \right) d\pi(u) \varphi dx dt = 0, \quad \forall \varphi \in C^{\infty}_{c}(\R_{+} \times S^1; \R^n)$$
	Letting $\varphi \in C^{\infty}_{c}(\R_{+} \times S^1; \R^n)$, we therefore have, using the notation $d\pi^{\perp} = Id - d \pi$:
	\begin{align}
		\int_{0}^{\infty} \int_{S^1} \left( u_t + (-\Delta)^{1/2} u \right) \varphi dx dt	&= \int_{0}^{\infty} \int_{S^1} \left( u_t + (-\Delta)^{1/2} u \right) d\pi^{\perp}(u) \varphi dx dt \notag \\
																&= \int_{0}^{\infty} \int_{S^1} u_t \cdot u \tilde{\varphi} +  d_{1/2} u \cdot d_{1/2} \left( u \tilde{\varphi} \right) dx dt \notag \\
																&= \int_{0}^{\infty} \int_{S^1} d_{1/2} u \cdot d_{1/2} \left( u \tilde{\varphi} \right) dx dt \notag \\
																&= \int_{0}^{\infty} \int_{S^1} \int_{S^1} d_{1/2}u(t,x,y) \cdot u(t,y) d_{1/2} \tilde{\varphi}(t,x,y) \frac{dy dx}{| x-y |} dt \notag \\
																&+ \int_{0}^{\infty} \int_{S^1} \int_{S^1} d_{1/2} u(t,x,y) d_{1/2} u(t,x,y) \tilde{\varphi}(t,x) \frac{dy dx}{| x-y |} dt \notag \\
																&= \int_{0}^{\infty} \int_{S^1} \int_{S^1} d_{1/2} u(t,x,y) d_{1/2} u(t,x,y) \langle u(t,x), {\varphi}(t,x) \rangle \frac{dy dx}{| x-y |} dt \notag \\
																&= \int_{0}^{\infty} \int_{S^1} u(t,x) | d_{1/2} u |^2(t,x) \cdot {\varphi}(t,x) dx dt
	\end{align}
	where we used that $u_t$ is a.e. tangential to $S^{n-1}$ (seen by using approximation by convolutions), $d\pi(x) v = v$ for all $v \perp x$ and $x \in S^{n-1}$ and $d\pi(x) x = 0$. The latter was used to write:
	$$d\pi^{\perp}(u(t,x)) \varphi(t,x) = \langle \varphi(t,x), u(t,x) \rangle u(t,x) =: \tilde{\varphi}(t,x) u(t,x),$$
	with $\tilde{\varphi} \in {H}^{1/2}(S^1;\R) \cap L^{\infty}(S^1)$ by direct computation. Observe that we implicitely used:
	\begin{align}
		\int_{0}^{\infty} 	&\int_{S^1} \int_{S^1} d_{1/2}u(t,x,y) \cdot u(t,y) d_{1/2} \tilde{\varphi}(t,x,y) \frac{dy dx}{| x-y |} dt \notag \\
									&= \int_{0}^{\infty} \int_{S^1} \int_{S^1} d_{1/2}u(t,x,y) \cdot \frac{u(t,x) + u(t,y)}{2} d_{1/2} \tilde{\varphi}(t,x,y) \frac{dy dx}{| x-y |} dt = 0,
	\end{align}
	since:
	$$d_{1/2}u(t,x,y) \cdot \left( u(t,x) + u(t,y) \right) = \frac{u(t,x) - u(t,y)}{| x-y |^{1/2}} \cdot \left( u(t,x) + u(t,y) \right) = \frac{| u(x) |^2 - | u(y) |^2}{| x-y |^{1/2}} = 0,$$
	since $u \in S^{n-1}$ for almost all $(t,x)$. Therefore, we have shown that:
	$$u_{t} + (-\Delta)^{1/2} u = u | d_{1/2} u |^{2} \quad \text{ in } \mathcal{D}'(\R_{+} \times S^1),$$
	which is the formulation provided in \eqref{gradflowinintroduction}. This proves the aforementioned equivalence between the two formulations.
	
	\subsection{Uniqueness}
	
	The first property we verify is uniqueness. As already mentioned in the introduction, the key idea is to first show uniqueness under slightly better regularity assumptions similar to \cite{struwe1}. Then, we use the fractional Wente-Lemma \ref{fractionalwentelemmaons1insec2} and argue similar to \cite{riv2} in order to show that energy class solutions of sufficiently small energy actually are slightly more regular and thus the uniqueness result for more regular solutions applies in this situation.
	
	\subsubsection{Uniqueness under Higher Regularity Assumptions}
	
	Let us assume that $u, v$ are two solutions to the fractional gradient flow taking a.e. values in $S^{n-1} \subset \R^n$ and such that the following holds true:
	\begin{equation}
	\label{regass}
		u,v \in L^{\infty}(\R_{+}; H^{1/2}(S^1)) \cap L^{2}_{loc}(\R_{+}; H^{1}(S^{1})); \quad u_t, v_t \in L^{2}(\R_{+}; L^{2}(S^{1}))
	\end{equation}
	The local integrability is meant with respect to the domain $[0, \infty[$. It should be noticed that we include more regularity than is actually required/given by the existence result in \cite{schisirewang}, which is in agreement with the uniqueness treatment in \cite{struwe1}. Additionally, it is easy to see thanks to $u, v \in S^{n-1}$ almost everywhere, that:
	$$u, v \text{ are bounded. }$$
	We assume that they satisfy the gradient flow associated with the $1/2$-harmonic map, which we have seen in the previous subsection to be equivalent to:
	\begin{equation}
	\label{gradflowuv}
		u_t + (-\Delta)^{1/2} u = u | d_{1/2} u |^2, \quad v_{t} + (-\Delta)^{1/2} v = v |d_{1/2} v |^2,
	\end{equation}
	together with the boundary condition:
	$$u(0, \cdot) = v(0, \cdot) = u_{0} \in H^{1/2}(S^1;S^{n-1})$$
	By the assumptions, we may evaluate the $1/2$-Laplacian for a.e. fixed time $t$ (as $\nabla u(t)$ for almost every fixed time $t$ is in $L^{2}(S^1)$), which shows that the gradient flow is satisfied in a strong sense by using fractional integration by parts on the weak formulation. Our goal is to prove the following result:
	
	\begin{thm}
	\label{uniqueness}
		Let $u,v$ as above be solutions to the fractional gradient flow with the same initial datum $u_0$. Assume that we have the following $1/2$-energy decay estimate:
		$$\| (-\Delta)^{1/4} u(t) \|_{L^{2}(S^1)}, \| (-\Delta)^{1/4} v(t) \|_{L^{2}(S^1)} \leq \| (-\Delta)^{1/4} u_{0} \|_{L^{2}(S^1)}, \quad \forall t \in \R_{+}$$
		Then we may conclude:
		$$u = v,$$
		i.e. the solutions agree for every time $t > 0$ as well.
	\end{thm}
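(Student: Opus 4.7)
The plan is to adapt Struwe's $L^2$-uniqueness argument to the non-local setting: I set $w := u - v$, test the PDE satisfied by $w$ against $w$ itself in $L^2(S^1)$, and close by Gronwall. The role of the additional regularity $u, v \in L^2_{\mathrm{loc}}(\R_+; H^1(S^1))$ is exactly to give enough control of $|d_{1/2} u|$ in an $L^4$-type norm via the Triebel-Lizorkin equivalence of Theorem \ref{schiwangthm1.4}.

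First, subtracting the two equations in \eqref{gradflowuv} and using the polarization identity
$$|d_{1/2} u|^2 - |d_{1/2} v|^2 \;=\; d_{1/2}(u+v)\cdot d_{1/2} w,$$
I get $w_t + (-\Delta)^{1/2} w = w\,|d_{1/2} u|^2 + v\,\bigl(d_{1/2}(u+v)\cdot d_{1/2} w\bigr)$ with $w(0,\cdot) = 0$. Pairing with $w$ in $L^2(S^1)$ and using $\int w\,(-\Delta)^{1/2} w\,dx = \|w\|_{\dot H^{1/2}}^2$ yields
$$\tfrac12\tfrac{d}{dt}\|w\|_{L^2}^2 + \|w\|_{\dot H^{1/2}}^2 \;=\; \int_{S^1} |w|^2 |d_{1/2}u|^2\,dx \;+\; \int_{S^1} (w\cdot v)\bigl(d_{1/2}(u+v)\cdot d_{1/2}w\bigr)(x)\,dx.$$

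Next, pointwise Cauchy-Schwarz in the $y$-variable inside the inner product $\cdot$, together with $|v| \equiv 1$ a.e., bounds both terms by products of Lebesgue norms of $|w|$ and of fractional gradients. H\"older's inequality with exponents $(4,4)$ and $(4,4,2)$ respectively then reduces everything to three inputs: (i) the fractional Ladyzhenskaya inequality $\|f\|_{L^4(S^1)}^2 \lesssim \|f\|_{L^2}\|f\|_{\dot H^{1/2}}$; (ii) the identification $\||d_{1/2}f|\|_{L^p(S^1)} = \|f\|_{\dot W^{1/2,(p,2)}(S^1)} \sim \|f\|_{\dot F^{1/2}_{p,2}(S^1)}$ for $p > 1$ coming from Theorem \ref{schiwangthm1.4}; and (iii) the interpolation estimate $\|f\|_{\dot F^{1/2}_{4,2}(S^1)}^2 \lesssim \|f\|_{\dot H^{1/2}}\|f\|_{\dot H^1}$, obtained by complex interpolation between $\dot F^{1/2}_{2,2} = \dot H^{1/2}$ and the BMO-type endpoint $\dot F^{1/2}_{\infty,2}$, into which $\dot H^1(S^1)$ embeds by the critical Sobolev embedding for Triebel-Lizorkin spaces. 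Combining (i)--(iii) bounds each of the two terms by a quantity of the form $\|w\|_{L^2}\|w\|_{\dot H^{1/2}}\|u\|_{\dot H^{1/2}}\|u\|_{\dot H^1}$ (and its analogue with $u$ replaced by $u+v$).

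Young's inequality $ab \leq \varepsilon a^2 + C_\varepsilon b^2$ with $a = \|w\|_{\dot H^{1/2}}$ then absorbs an $\varepsilon$-fraction of the gradient term on the left-hand side and leaves
$$\frac{d}{dt}\|w\|_{L^2}^2 \;\leq\; C\,\|w\|_{L^2}^2\,\Phi(t), \qquad \Phi(t) \,\lesssim\, \|u_0\|_{\dot H^{1/2}}^2\bigl(\|u(t)\|_{\dot H^1}^2 + \|v(t)\|_{\dot H^1}^2\bigr),$$
where the $\dot H^{1/2}$-factors have been bounded uniformly by the energy-decay hypothesis. Since $\Phi \in L^1_{\mathrm{loc}}(\R_+)$ by the $L^2_{\mathrm{loc}}(\R_+; H^1)$ regularity of $u$ and $v$, and $w(0) = 0$, Gronwall's lemma forces $w \equiv 0$. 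The main technical obstacle lies in step (iii): a naive pointwise bound on $|d_{1/2}u|$ is unavailable because $H^1(S^1) \hookrightarrow C^{1/2}$ is borderline, so one really has to route through the Triebel-Lizorkin equivalence and a complex-interpolation argument in order to end up with the sharp mixed-norm estimate $\||d_{1/2}u|\|_{L^4}^2 \lesssim \|u\|_{\dot H^{1/2}}\|u\|_{\dot H^1}$---with $\|u\|_{\dot H^1}$ to the first power---so that the resulting Gronwall weight $\Phi$ is merely $L^1_{\mathrm{loc}}$ rather than requiring integrability of $\|u\|_{\dot H^1}^4$.
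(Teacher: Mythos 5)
Your proof is correct and follows the same overall strategy as the paper (an $L^2$ energy estimate on $w=u-v$, control of the quadratic terms through an $L^4$-bound on $|d_{1/2}u|$, then absorption), but the execution differs in three places, and it is worth comparing them. First, you use the polarization identity $|d_{1/2}u|^2-|d_{1/2}v|^2 = d_{1/2}(u+v)\cdot d_{1/2}w$, whereas the paper integrates along the segment from $v$ to $u$ via the fundamental theorem of calculus; both yield the same pointwise bound $|R_2|\lesssim(|d_{1/2}u|+|d_{1/2}v|)|d_{1/2}w|$, and polarization is arguably cleaner. Second, you close with Gronwall using an $L^1_{\mathrm{loc}}$ weight, while the paper runs a smallness-in-time argument (choose $T$ so the space-time $L^4$ norm of the fractional gradients is small, absorb, then iterate by a connectedness argument and continuity of $t\mapsto\|w(t)\|_{L^2}$); Gronwall is somewhat more direct and avoids having to verify maximality of $\|w(T)\|_{L^2}$ over $[0,T]$. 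Third, and this is the substantive divergence, your step (iii) obtains $\|f\|_{\dot F^{1/2}_{4,2}}^2\lesssim\|f\|_{\dot H^{1/2}}\|f\|_{\dot H^1}$ by complex interpolation through the $p=\infty$ endpoint $\dot F^{1/2}_{\infty,2}$ and a critical Sobolev embedding; this is correct but requires some care, since Triebel--Lizorkin spaces with $p=\infty$ and the Franke--Jawerth embedding at that endpoint are delicate. The paper reaches the same estimate by a more elementary route that you could substitute: once you have $\||d_{1/2}f|\|_{L^4}\sim\|(-\Delta)^{1/4}f\|_{L^4}$ from Theorem~\ref{schiwangthm1.4}, apply the fractional Ladyzhenskaya inequality of Lemma~\ref{ladys1} directly to the function $g=(-\Delta)^{1/4}f$, giving $\|g\|_{L^4}^2\lesssim\|g\|_{L^2}\|g\|_{\dot H^{1/2}}=\|f\|_{\dot H^{1/2}}\|f\|_{\dot H^1}$ with no interpolation at the $p=\infty$ endpoint. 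That shortcut sidesteps the only technically fragile point in your argument, and is worth adopting.
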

	
	It will be clear from the proof of Theorem \ref{uniqueness} that it would also suffice to assume:
	$$\sup_{t \in \R_{+}} \| (-\Delta)^{1/4} u(t) \|_{L^{2}(S^1)} < + \infty,$$
	and similarily for $v$.
	
	\begin{proof}
		The proof relies on the same ideas as the proof of uniqueness under better regularity provided in Lemma 3.12 in \cite{struwe1} for the harmonic gradient flow. Therefore, we begin by defining $w := u - v$ and observe:
		$$w \in L^{\infty}(\R_{+}; H^{1/2}(S^1)) \cap L^{2}_{loc}(\R_{+}; H^{1}(S^{1})); \quad w_t \in L^{2}(\R_{+}; L^{2}(S^{1})),$$
		as well as the initial condition:
		\begin{equation}
		\label{initialdatumw}
			w(0, \cdot) = 0
		\end{equation}
		This is an immediate consequence of the regularity and initial data of $u$ and $v$. Let us now combine the equations in \eqref{gradflowuv} to determine the non-local PDE solved by $w$:
		\begin{align}
		\label{gradfloww}
			w_{t} + (-\Delta)^{1/2} w	&= u_{t} + (-\Delta)^{1/2} u - v_{t} - (-\Delta)^{1/2} v \notag \\
								&= u | d_{1/2} u |^{2} - v | d_{1/2} v |^2 \notag \\
								&= (u-v) | d_{1/2} u |^{2} + v ( | d_{1/2} u |^{2} - | d_{1/2} v |^2 ) \notag \\
								&= w | d_{1/2} u |^{2} + v ( | d_{1/2} u |^{2} - | d_{1/2} v |^2 ) =: R_1 + R_2
		\end{align}
		If we test \eqref{gradfloww} against $w$ itself, we obtain for any $T \in \R_{+}$:
		\begin{align}
		\label{estimate1}
			\int_{0}^{T} \int_{S^1} w_t \cdot w + (-\Delta)^{1/2} w \cdot w dx dt	&= \int_{0}^{T} \frac{d}{dt} \left( \frac{1}{2} \| w(t) \|_{L^{2}(S^1)} \right) dt + \int_{0}^{T} \| (-\Delta)^{1/4} w(t) \|_{L^{2}(S^1)} dt \notag \\
																&= \int_{0}^{T} \int_{S^1} | w |^2 |d_{1/2} u |^2 + v ( | d_{1/2} u |^{2} - | d_{1/2} v |^2 ) \cdot w dx dt \notag \\
																&\leq \int_{0}^{T} \int_{S^1} | w |^2 |d_{1/2} u |^2 dx dt + \int_{0}^{T} \int_{S^1} | w | | R_{2} | dx dt
		\end{align}
		So, using the fundamental theorem of calculus, we arrive at:
		\begin{equation}
		\label{estw1}
			\frac{1}{2} \| w(T) \|_{L^{2}(S^{1})} + \int_{0}^{T} \| (-\Delta)^{1/4} w(t) \|_{L^{2}(S^{1})} dt \leq \int_{0}^{T} \int_{S^1} | w |^2 |d_{1/2} u |^2 dx dt + \int_{0}^{T} \int_{S^1} | w | | R_{2} | dx dt
		\end{equation}
		We emphasise that we used \eqref{initialdatumw} in order to evaluate the integral of the derivative at $t = 0$. In order to proceed, we have to investigate the term $| d_{1/2} u |^{2} - | d_{1/2} v |^2$ more closely. To do this, let us write for $x \in S^1$ by means of the fundamental theorem:
		\begin{align*}
			| d_{1/2} u |^{2}(x) - | d_{1/2} v |^2(x)	&= \int_{0}^{1} \frac{d}{ds} \left( | d_{1/2} (v + s (u-v)) |^2(x) \right) ds \notag \\
										&= \int_{0}^{1} \frac{d}{ds} \left( \int_{S^1} \frac{\left( v(x) - v(y) + s (u(x) - v(x) - u(y) + v(y)) \right)^2}{| x-y |^{2}} dy \right) ds \notag \\
										&= \int_{0}^{1} \frac{d}{ds} \left( \int_{S^1} \frac{\left( v(x) - v(y) + s (w(x) - w(y)) \right)^2}{| x-y |^{2}} dy \right) ds \notag \\
										&= \int_{0}^{1} \int_{S^{1}} 2 \frac{(v(x) - v(y) + s(w(x) - w(y)))(w(x) - w(y))}{| x-y |^2} dy ds \notag \\
										&\leq 2 \int_{0}^{1} | d_{1/2}((1-s)v + s u) |(x) \cdot | d_{1/2} w |(x) ds \notag \\
										&\leq C \left( | d_{1/2} u |(x) + | d_{1/2} v |(x) \right) \cdot | d_{1/2} w |(x)
		\end{align*}
		where we used H\"older's inequality and the integrability properties of $u,v$ during the sequence of inequalities above. This implies the following estimate for $R_2$:
		\begin{align}
			| R_{2} |(x) 	&\leq C | v |(x) \left( | d_{1/2} u |(x) + | d_{1/2} v |(x) \right) \cdot | d_{1/2} w |(x) \notag \\
						&\leq C \left( | d_{1/2} u |(x) + | d_{1/2} v |(x) \right) \cdot | d_{1/2} w |(x),
		\end{align}
		where we implicitely used $| v | = 1$ almost everywhere. By using Cauchy-Schwarz and Young's inequality, we therefore find:
		$$\int_{0}^{T} \int_{S^1} | w | | R_{2} | dx dt \leq \delta \int_{0}^{T} \int_{S^{1}} | d_{1/2} w |^{2} dx dt + C(\delta) \int_{0}^{T} \int_{S^{1}} | w |^{2} \left( | d_{1/2} u | + | d_{1/2} v | \right)^{2} dx dt,$$
		for any $\delta > 0$. Observe that:
		$$\int_{0}^{T} \int_{S^{1}} | d_{1/2} w |^{2} dx dt \sim \int_{0}^{T} \| (-\Delta)^{1/4} w \|_{L^2(S^{1})} dt,$$
		by direct computations, see Lemma \ref{fracgradest} after this proof. Thus, we may choose $\delta > 0$, such that after absorbing and using Cauchy-Schwarz:
		\begin{align}
		\label{estw2}
		\frac{1}{2} \| w(T) \|_{L^{2}(S^{1})} 	&+ \frac{1}{2} \int_{0}^{T} \| (-\Delta)^{1/4} w(t) \|_{L^{2}(S^{1})} dt \notag \\
									&\leq C \int_{0}^{T} \int_{S^{1}} | w |^{2} \left( | d_{1/2} u | + | d_{1/2} v | \right)^{2} dx dt \notag \\
									&\leq \tilde{C} \left( \int_{0}^{T} \int_{S^{1}} | w |^{4} dx dt \right)^{1/2} \cdot \left(  \int_{0}^{T} \int_{S^{1}} \left( | d_{1/2} u | + | d_{1/2} v | \right)^{4} dx dt \right)^{1/2}
		\end{align}
		Using Lemma \ref{ladys1} below for each fixed $t$, we can estimate:
		\begin{align}
		\label{estl4w}
			\int_{0}^{T} \int_{S^{1}} | w |^{4} dx dt		&\leq C \int_{0}^{T} \| w(t) \|_{L^{2}(S^{1})}^{2} \cdot \left( \| w(t) \|_{L^{2}(S^{1})}^{2} + \| (-\Delta)^{1/4} w(t) \|_{L^{2}(S^1)}^{2} \right) dt \notag \\
											&\leq \tilde{C} \left( \sup_{0 \leq s \leq T} \| w(s) \|_{L^{2}(S^{1})}^{2} + \int_{0}^{T} \| (-\Delta)^{1/4} w(t) \|_{L^{2}(S^{1})}^{2} dt \right)^2
		\end{align}
		Here, $\tilde{C}$ depends on $T$, which may be chosen sufficiently small, as seen afterwards, by using an iteration process to increase $T$ step by step. We notice that there is no dependence of the constant on the $L^2$-norm of $w(s)$, as we estimate:
		\begin{align}
			\int_{0}^{T} \| w(t) \|_{L^{2}(S^{1})}^{2} 	&\cdot \left( \| w(t) \|_{L^{2}(S^{1})}^{2} + \| (-\Delta)^{1/4} w(t) \|_{L^{2}(S^1)}^{2} \right) dt \notag \\
											&\leq \int_{0}^{T} \sup_{0 \leq s \leq T} \| w(s) \|_{L^{2}(S^1)}^2 \cdot \left( \sup_{0 \leq s \leq T} \| w(s) \|_{L^{2}(S^{1})}^{2} + \| (-\Delta)^{1/4} w(t) \|_{L^{2}(S^1)}^{2}\right) dt \notag \\
											&= \sup_{0 \leq s \leq T} \| w(s) \|_{L^{2}(S^1)}^2 \cdot \left( T \cdot \sup_{0 \leq s \leq T} \| w(s) \|_{L^{2}(S^{1})}^{2} + \int_{0}^{T} \| (-\Delta)^{1/4} w(t) \|_{L^{2}(S^1)}^{2}\right) dt \notag \\
											&\leq \tilde{C} \left( \sup_{0 \leq s \leq T} \| w(s) \|_{L^{2}(S^{1})}^{2} + \int_{0}^{T} \| (-\Delta)^{1/4} w(t) \|_{L^{2}(S^{1})}^{2} dt \right)^2
		\end{align}
		which is precisely the estimate presented above and the dependence on $T$ is benign, i.e. $\tilde{C}$ remains bounded as $T \to 0$. Observe that we used the inequality:
		$$\sup_{0 \leq s \leq T} \| w(s) \|_{L^{2}(S^1)}^2 \leq  \sup_{0 \leq s \leq T} \| w(s) \|_{L^{2}(S^{1})}^{2} + \int_{0}^{T} \| (-\Delta)^{1/4} w(t) \|_{L^{2}(S^{1})}^{2} dt,$$
		which is trivially true.\\
		
		\textbf{Claim 1:} For every $\varepsilon > 0$, there is $T > 0$ small enough, such that:
		\begin{equation}
		\label{suffsmall}
			\int_{0}^{T} \int_{S^{1}} \left( | d_{1/2} u | + | d_{1/2} v | \right)^{4} dx dt < \varepsilon,
		\end{equation}

		However, before we prove this claim, we observe that this is indeed sufficient to conclude our proof of Theorem \ref{uniqueness}, as then we may choose $T > 0$ as in the proof of Lemma 3.12 in \cite{struwe1} to maximize the $L^{2}(S^1)$-norm on $[0,T]$, i.e. $\| w(T) \|_{L^{2}(S^1)} = \sup_{0 \leq s \leq T} \| w(s) \|_{L^{2}(S^{1})}^{2}$. This is possible by continuity of $t \mapsto w(t)$ with respect to the $L^2$-norm due to the assumptions in \eqref{regass}, in particular the integrability of the weak derivative in time-direction $u_t, v_t$ and thus $w_t$. We refer to \cite{evans}, Chapter 5.9.2 for details regarding this continuity. Alternatively, \eqref{regass} implies that $u,v$ and therefore also $w$ are in $H^{1}_{loc}(S^1 \times ]0,\infty[)$. Thus, by the trace theorem $u(t, \cdot), v(t,\cdot), w(t,\cdot)$ lie in $H^{1/2}(S^1)$ and depend continuously on $t$, the latter property owing to the continuity of the trace operator.\\

		In addition, for some fixed, but arbitrary, a-priori time $0 < T_0 \leq T$, $T$ as in the claim, the maximum of the $L^2$-norms at a given time over the interval $[0,T_0]$ must be attained at some $t_0 > 0$, as otherwise we have $w = 0$ for all times $t \leq T_0$, which would also show the desired equality and hence we could restart the argument from $T_0$ on. Thus, for sufficiently small $\varepsilon > 0$, we may absorb the right hand side in \eqref{estw2} into the left hand side, immediately giving the desired result, i.e. $w(T) = 0$ and thus for all $0 \leq t \leq T$. More precisely, from \eqref{estw1}, \eqref{estw2} and a sufficiently small $\varepsilon$, we obtain:
		\begin{align*}
			\frac{1}{2} &\| w(T) \|_{L^{2}(S^{1})} + \int_{0}^{T} \| (-\Delta)^{1/4} w(t) \|_{L^{2}(S^{1})} dt \notag \\
										&\leq C \delta \int_{0}^{T} \| (-\Delta)^{1/4} w \|_{L^2(S^{1})} dt + \tilde{C}(\delta) \sqrt{\varepsilon} \left( \sup_{0 \leq s \leq T} \| w(s) \|_{L^{2}(S^{1})}^{2} + \int_{0}^{T} \| (-\Delta)^{1/4} w(t) \|_{L^{2}(S^{1})}^{2} dt \right),
		\end{align*}
		which for $\delta > 0$ sufficiently small and observing maximality of the choice of $T$ (note that we may choose $T$ smaller than required by \eqref{suffsmall} to ensure that it is also the time with maximal $L^{2}$-norm) then becomes:
		\begin{align*}
			\frac{1}{2} \sup_{0 \leq s \leq T} \| w(s) \|_{L^{2}(S^{1})} 	&+ \frac{1}{2} \int_{0}^{T} \| (-\Delta)^{1/4} w(t) \|_{L^{2}(S^{1})} dt \notag \\
										&\leq \tilde{C} \sqrt{\varepsilon} \left( \sup_{0 \leq s \leq T} \| w(s) \|_{L^{2}(S^{1})}^{2} + \int_{0}^{T} \| (-\Delta)^{1/4} w(t) \|_{L^{2}(S^{1})}^{2} dt \right),
		\end{align*}
		and if $\tilde{C} \sqrt{\varepsilon} = \frac{1}{4}$, we may absorb this contribution into the left hand side to find:
		$$\sup_{0 \leq s \leq T} \| w(s) \|_{L^{2}(S^{1})} = 0$$
		The argument then is completed by the usual connectedness-type argument using convergence $u(t) \to u(t_0)$ for $t \to t_0$ in $L^{2}$ and iterating, see \cite{struwe1}. Indeed, consider the set:
		$$I \subset [0, \infty[, \quad I := \big{\{} t \in [0, \infty[ \ \big{|}\ w(s) = 0, \forall s \leq t \big{\}}$$
		Clearly, $0 \in I$ by construction and hence $I \neq \emptyset$. Moreover, $I$ is open, since if $t \in I$, we may use the arguments above to deduce that $w(s) = 0$ for all $t \leq s < t + \varepsilon$, for some sufficiently small $\varepsilon$. Finally, $I$ is closed, which then shows $I = [0, \infty [$ and finishes the argument. This is clear as:
		$$\lim_{s \to t} \| w(s) \|_{L^{2}} = \| w(t) \|_{L^2},$$
		which proves that if all $s < t$ satisfy $s \in I$, then $w(t) = 0$ in $L^2(S^1)$ and hence $t \in I$.\\
		
		\textbf{Proof of Claim 1:} Now, let us return to \eqref{suffsmall}. We shall provide two justifications of this estimate. The first argument postponed to Appendix A relies on some properties found in \cite{dalio}, \cite{daliomartriv} connecting the fractional Laplacian on the circle to the one on the real line. The precise results shall be stated and proven in Appendix A. A different apporach uses Theorem \ref{schiwangthm1.4} for $S^1$ directly. Here, we shall just present an outline of the argument:\\
		
		We observe that it suffices to find corresponding estimates for $d_{1/2} u$ and $d_{1/2} v$ respectively. For these, we have:
		\begin{align}
		\label{estd1/2u}
			\int_{0}^{T} \int_{S^1} | d_{1/2} u |^4 dx dt	&\leq C \int_{0}^{T} \| (- \Delta)^{1/4} u(t) \|_{L^{4}}^4 dt \notag \\
											&\leq \tilde{C} \int_{0}^{T} \| (-\Delta)^{1/4}u(t) \|_{L^{2}}^{2} \cdot \| (-\Delta)^{1/2} u \|_{L^{2}}^2 dt \notag \\
											&\leq \tilde{C} \sup_{0 \leq s \leq T} \| (-\Delta)^{1/4} u(s) \|_{L^2}^2 \cdot \int_{0}^{T} \int_{S^{1}} | (-\Delta)^{1/2} u |^2 dx dt \notag \\
											&\leq \tilde{C} \| (-\Delta)^{1/4} u_{0} \|_{L^2}^2 \cdot \int_{0}^{T} \int_{S^{1}} | (-\Delta)^{1/2} u |^2 dx dt 
		\end{align}
		where we used $u \in H^{1}(S^1)$, which immediately implies $(-\Delta)^{1/4} u \in H^{1/2}(S^1)$, as well as Lemma \ref{ladys1}. 
		We refer to Appendix A for the details on the proof of the estimate using an extension procedure, in particular the first inequality which is actually the only missing step here. Alternatively, directly using the second part of Theorem \ref{schiwangthm1.4} on the domain $S^1$, see \eqref{secondpartofthm2.1}, as described in the preliminary section and proven in Appendix B, the first inequality could also be obtained immediately and the rest follows by using Lemma \ref{ladys1}. The main difference between these two approaches lies in the use of Theorem \ref{schiwangthm1.4} either on $\R$ or on $S^1$, depending on which techniques are used.
		
		The claim now follows by \eqref{regass} and the $L^{2}$-integrability of the $1/2$-Laplacian of $u$. Notice that the supremum is finite due to the assumptions in the statement of the Theorem.
	\end{proof}
	
	We now prove some useful results that were invoked in the proof above or motivate the conditions of the main result of this subsection, Theorem \ref{uniqueness}:
	
	\begin{lem}
	\label{ladys1}
		Let $u \in H^{1/2}(S^{1})$. Then the following estimate holds for some $C > 0$:
		$$\| u \|_{L^{4}} \leq C \| u \|_{L^{2}}^{1/2} \| u \|_{H^{1/2}}^{1/2}$$
	\end{lem}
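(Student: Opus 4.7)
The estimate is the critical-scaling fractional Gagliardo--Nirenberg/Ladyzhenskaya inequality in one dimension: both $L^4(\R)$ and $\dot{H}^{1/2}(\R)$ carry the scaling weight $-1/4$, so the exponent $\theta = 1/2$ appearing on each factor on the right-hand side is dictated by dimensional analysis. I would prove it directly on $S^1$ by a Littlewood--Paley decomposition, staying within the Triebel--Lizorkin framework already developed in Section~2.

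Concretely, I would fix a standard dyadic decomposition $u = \sum_{j \ge 0} \Delta_j u$ with $\Delta_j u$ supported in the dyadic Fourier band $|k| \sim 2^j$ (and $\Delta_0 u$ the low-frequency piece). Two standard ingredients then suffice. Bernstein's inequality on $S^1$, which follows directly from Cauchy--Schwarz on the Fourier side, gives
\[
\|\Delta_j u\|_{L^4(S^1)} \lesssim 2^{j/4}\,\|\Delta_j u\|_{L^2(S^1)}.
\]
Second, the Littlewood--Paley identification $L^4(S^1) \simeq F^{0}_{4,2}(S^1)$ from the torus version of the theory in \cite{schmeitrieb}, combined with Minkowski's inequality in $L^2$, yields
\[
\|u\|_{L^4}^2 \lesssim \Big\|\sum_j |\Delta_j u|^2 \Big\|_{L^2(S^1)} \le \sum_j \|\Delta_j u\|_{L^4(S^1)}^2.
\]

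Combining these two bounds and applying Cauchy--Schwarz in the dyadic index $j$,
\[
\|u\|_{L^4}^2 \lesssim \sum_j 2^{j/2}\,\|\Delta_j u\|_{L^2}^2 \le \Big(\sum_j \|\Delta_j u\|_{L^2}^2\Big)^{1/2}\Big(\sum_j 2^j\,\|\Delta_j u\|_{L^2}^2\Big)^{1/2} \sim \|u\|_{L^2}\,\|u\|_{\dot{H}^{1/2}},
\]
where the last equivalence uses the standard Littlewood--Paley characterisations $\|u\|_{L^2}^2 \sim \sum_j \|\Delta_j u\|_{L^2}^2$ and $\|u\|_{\dot{H}^{1/2}}^2 \sim \sum_j 2^j\,\|\Delta_j u\|_{L^2}^2$. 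Taking square roots and bounding $\|u\|_{\dot{H}^{1/2}} \le \|u\|_{H^{1/2}}$ then yields the claim.

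The most delicate step is the Littlewood--Paley square-function bound on $S^1$, but this is part of the parallel Triebel--Lizorkin theory on the torus already referenced in Section~2, so no new analytic input is needed. An alternative route, consistent with the techniques mentioned in the introduction, would be to transfer the inequality from $\R$ (where it is immediate by rescaling from the critical embedding $\dot{H}^{1/2}(\R) \hookrightarrow \mathrm{BMO}(\R)$ together with the Fefferman--Stein interpolation identity $L^4 = [L^2, \mathrm{BMO}]_{1/2}$) by means of the stereographic projection used elsewhere in the paper.
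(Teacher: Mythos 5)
Your argument is correct, but it takes a more heavily armed route than the paper. The paper's proof is a two-liner: first the one-dimensional Sobolev embedding $H^{1/4}(S^1)\hookrightarrow L^4(S^1)$, and then a direct Cauchy--Schwarz on Fourier coefficients,
\[
\|u\|_{H^{1/4}}^2 = \sum_{n\in\Z}(1+|n|^2)^{1/4}|\hat{u}(n)|^2
\le \Bigl(\sum_n (1+|n|^2)^{1/2}|\hat{u}(n)|^2\Bigr)^{1/2}\Bigl(\sum_n|\hat{u}(n)|^2\Bigr)^{1/2}
= \|u\|_{H^{1/2}}\,\|u\|_{L^2},
\]
which is exactly the statement that $H^{1/4}$ is the midpoint interpolation space between $L^2$ and $H^{1/2}$. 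Your proof reaches the same place by unfolding the Sobolev embedding into a full Littlewood--Paley decomposition, replacing the single Cauchy--Schwarz on Fourier modes by Bernstein plus the $L^4 \simeq F^0_{4,2}$ square-function identification plus a dyadic Cauchy--Schwarz. Both are sound; the paper's route buys brevity and avoids invoking the $L^p$ Littlewood--Paley theory on $S^1$ at all, while yours makes the dyadic scaling structure explicit and would adapt more readily to other $(p,q)$ exponents or to the non-Hilbertian Triebel--Lizorkin spaces that appear later. One small point: with the inhomogeneous decomposition (including the $j=0$ block), $\sum_j 2^j\|\Delta_j u\|_{L^2}^2$ is really comparable to the inhomogeneous $H^{1/2}$ norm rather than $\dot{H}^{1/2}$; since you pass to $H^{1/2}$ at the end anyway, this is harmless, but the intermediate $\sim \|u\|_{\dot{H}^{1/2}}$ should be $\lesssim \|u\|_{H^{1/2}}$ to be literally correct on the circle.
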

	
	\begin{proof}
		By Sobolev embeddings, we immediately find for some $C > 0$:
		$$\| u \|_{L^4} \leq C \| u \|_{H^{1/4}}$$
		Additionally, we have by definition:
		\begin{align*}
			\| u \|_{H^{1/4}}^{2}	&= \sum_{n \in \Z} ( 1 + |n|^2 )^{1/4} | \hat{u}(n) |^2 \notag \\
							&= \sum_{n \in \Z} ( 1 + |n|^2 )^{1/4} | \hat{u}(n) | \cdot | \hat{u}(n) | \notag \\
							&\leq \left( \sum_{n \in \Z} ( 1 + |n|^2 )^{1/2} | \hat{u}(n) |^2 \right)^{1/2} \cdot \left( \sum_{n \in \Z} | \hat{u}(n) |^2 \right)^{1/2} \notag \\
							&= \| u \|_{H^{1/2}} \cdot \| u \|_{L^{2}}
		\end{align*}
		This now yields:
		$$\| u \|_{L^4} \leq C \| u \|_{H^{1/4}} \leq C \sqrt{\| u \|_{H^{1/2}} \cdot \| u \|_{L^{2}}} = C \| u \|_{L^{2}}^{1/2} \| u \|_{H^{1/2}}^{1/2}$$
		Thus, the Lemma is proven.
	\end{proof}
	
	We highlight that Lemma \ref{ladys1} continues to be true on $\R$ by using classical rescaling techniques or relying, for example, on Littlewood-Paley theory.
	
	\begin{lem}
	\label{fracgradest}
		It holds the following for every $u \in H^{1/2}(S^1)$:
		\begin{equation}
			\int_{S^1} | d_{1/2} u |^2 dx \sim \| (-\Delta)^{1/4} u \|_{L^2(S^1)}^2
		\end{equation}
	\end{lem}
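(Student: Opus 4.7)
The plan is to unravel both quantities in terms of Fourier series and check that, mode by mode, they agree up to a universal constant.

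First, by the definition of $d_{1/2}$ and $|\cdot|$ on $\mathcal{M}_{od}(S^1 \times S^1)$, one has
\begin{equation*}
\int_{S^1} |d_{1/2} u|^2 \, dx = \int_{S^1}\int_{S^1} \frac{|u(x) - u(y)|^2}{|x-y|^2}\, dy\, dx,
\end{equation*}
so the left-hand side is exactly the Gagliardo seminorm $[u]_{\dot H^{1/2}(S^1)}^2$. On the other hand, using the Parseval identity together with the Fourier multiplier definition of $(-\Delta)^{1/4}$, the right-hand side equals (up to a fixed factor of $2\pi$) the quantity $\sum_{k \in \Z} |k|\, |\hat u(k)|^2$. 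So the task reduces to showing that the Gagliardo seminorm is comparable to $\sum_{k}|k|\,|\hat u(k)|^2$.

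Next I would expand $u = \sum_k \hat u(k)\, e^{ikx}$ inside the double integral. Invoking Fubini (justified by a standard density argument, approximating $u$ by trigonometric polynomials and using monotone convergence since the integrand is non-negative), the orthogonality relation
\begin{equation*}
\int_{S^1}\int_{S^1} \frac{(e^{ikx}-e^{iky})\overline{(e^{i\ell x}-e^{i\ell y})}}{|x-y|^2}\, dx\, dy = 0 \qquad (k \neq \ell),
\end{equation*}
which follows from translation invariance together with $\int_{S^1} e^{i(k-\ell)z}\,dz = 0$ for $k \neq \ell$, kills every cross term. Hence
\begin{equation*}
\int_{S^1}\int_{S^1} \frac{|u(x)-u(y)|^2}{|x-y|^2}\, dx\, dy \;=\; \sum_{k \in \Z} c_k\, |\hat u(k)|^2,
\end{equation*}
where $c_k := \int_{S^1}\int_{S^1} \frac{|e^{ikx}-e^{iky}|^2}{|x-y|^2}\,dx\,dy$.

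Finally I would compute $c_k$ explicitly. Using $|e^{ikx}-e^{iky}|^2 = 4\sin^2(k(x-y)/2)$ and $|x-y|^2 = 4\sin^2((x-y)/2)$, translation invariance gives
\begin{equation*}
c_k = 2\pi \int_{-\pi}^{\pi} \frac{\sin^2(kt/2)}{\sin^2(t/2)}\, dt,
\end{equation*}
which is $4\pi^2\,|k|$ by the classical Fejér kernel identity $\int_{-\pi}^{\pi} \sin^2(kt/2)/\sin^2(t/2)\,dt = 2\pi |k|$ (and $c_0 = 0$). Matching this with $\|(-\Delta)^{1/4} u\|_{L^2(S^1)}^2 = 2\pi \sum_k |k|\, |\hat u(k)|^2$ yields the equivalence, in fact with explicit constants.

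I do not expect a real obstacle here: the only mildly delicate step is justifying the interchange of summation and double integration, which is handled by truncating to a trigonometric polynomial and passing to the limit via monotone convergence for the partial-sum estimate, combined with Fatou for the reverse inequality; everything else is a direct Fourier computation.
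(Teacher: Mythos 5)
Your proof is correct, and it takes a genuinely different route from the paper's. The paper argues by symmetrizing the double integral: expanding $|u(x)-u(y)|^2$ and using the antisymmetry under $x \leftrightarrow y$, it rewrites $\int_{S^1}|d_{1/2}u|^2\,dx$ as $2\int_{S^1} u(x)\,\mathrm{P.V.}\!\int_{S^1}\frac{u(x)-u(y)}{|x-y|^2}\,dy\,dx = \tilde C\int_{S^1} (-\Delta)^{1/2}u\cdot u\,dx$, then uses self-adjointness of the fractional Laplacian to conclude $= \tilde C\|(-\Delta)^{1/4}u\|_{L^2}^2$. You instead expand $u$ into Fourier modes, use translation invariance to show the off-diagonal cross terms vanish, and compute the diagonal constants $c_k$ explicitly via the Fejér kernel identity, arriving at $c_k = 4\pi^2|k|$. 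The paper's argument is shorter and more structural (it only needs the principal-value definition of $(-\Delta)^{1/2}$ and symmetry), whereas yours is more computational but yields the sharp constant explicitly and sidesteps any appeal to the P.V. formulation; it is also arguably closer in spirit to the Fejér-kernel computations the paper itself carries out later in Section 3.3.1. Both approaches go through; the only small point to note in yours is that $u$ is $\R^n$-valued, but since both sides decouple over components this is harmless.
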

	
	\begin{proof}
		Let us observe the following for smooth functions $u$:
		\begin{align}
			\int_{S^1} | d_{1/2} u |^2 dx	&= \int_{S^1} \int_{S^1} \frac{| u(x) - u(y) |^2}{| x-y |^2} dydx \notag \\
									&= \int_{S^{1}} P.V. \int_{S^1} \frac{(u(x) - u(y))}{| x-y |^2} (u(x) - u(y)) dy dx \notag \\
									&= \int_{S^{1}} 2 P.V. \int_{S^1} \frac{u(x) - u(y)}{|x-y|^2} dy \cdot u(x) dx \notag \\
									&= \tilde{C} \int_{S^{1}} (-\Delta)^{1/2}u(x) \cdot u(x) dx \notag \\
									&= \tilde{C} \int_{S^{1}} | (-\Delta)^{1/4} u(x) |^2 dx \notag \\
									&= \tilde{C} \| (-\Delta)^{1/4} u \|_{L^2(S^1)}^2
		\end{align}
		for some $\tilde{C} > 0$, see also the definition of the fractional Lapacian in section 2. Here, $P.V.$ stands for principal value. For complete rigor, one has to take the integral on a subset of $S^1 \times S^1$ omitting the diagonal and letting the neighbourhood become arbitrarily small to deduce the second equality, to ensure the principal value can be taken and the fractional Laplacian emerges. The statement for general $u$ follows now by approximation.
	\end{proof}

	Finally, let us motivate the decay assumption on solutions of the fractional gradient flow in Theorem \ref{uniqueness}:
	$$\| (-\Delta)^{1/4} u(t) \|_{L^{2}(S^1)}\leq \| (-\Delta)^{1/4} u_{0} \|_{L^{2}(S^1)}, \quad \forall t \in \R_{+}$$
	It should be noted that this is a "classical" assumption when working with gradient flows, nevertheless we present the idea: To do this, let us assume that $u$ is a smooth solution of the fractional gradient flow. Then, we may test against $u_{t}$ and find:
	\begin{equation}
		\int_{0}^{T} \int_{S^1} | u_t |^2 + (-\Delta)^{1/2}u \cdot u_t dx dt = \int_{0}^{T} \int_{S^1} u | d_{1/2} u |^2 \cdot u_t dx dt = 0,
	\end{equation}
	where the last equality follows by observing that $u$ assumes values in a sphere, hence the derivative in $t$-direction will be tangential to the sphere and, as a result, orthogonal to $u$, implying:
	$$u \cdot u_{t} = 0$$
	In addition, we have:
	\begin{align}
		\int_{0}^{T} \int_{S^1} (-\Delta)^{1/2}u \cdot u_t dx dt	&= \int_{0}^{T} \int_{S^1} (-\Delta)^{1/4}u \cdot (-\Delta)^{1/4} u_t dx dt \notag \\
												&= \int_{0}^{T}\frac{1}{2} \frac{d}{dt} \left( \int_{S^1} | (-\Delta)^{1/4}u |^2 dx \right) dt \notag \\
												&= \frac{1}{2} \| (-\Delta)^{1/4} u(T) \|_{L^2(S^1)}^{2} - \frac{1}{2} \| (-\Delta)^{1/4} u(0) \|_{L^2(S^1)}^{2} \notag. \\
												&= \frac{1}{2} \| (-\Delta)^{1/4} u(T) \|_{L^2(S^1)}^{2} - \frac{1}{2} \| (-\Delta)^{1/4} u_0 \|_{L^2(S^1)}^{2}
	\end{align}
	Consequently, this computation shows that in the case of regular solutions:
	\begin{equation}
		\frac{1}{2} \| (-\Delta)^{1/4} u(T) \|_{L^2(S^1)}^{2} \leq \frac{1}{2} \| (-\Delta)^{1/4} u(T) \|_{L^2(S^1)}^{2} + \int_{0}^{T} \int_{S^1} | u_t |^2 dx dt = \frac{1}{2} \| (-\Delta)^{1/4} u_0 \|_{L^2(S^1)}^{2}
	\end{equation}
	This yields the desired boundedness of energy (in fact monotone decay of energy) and thus motivates the assumption we had in Theorem \ref{uniqueness}. We formulate this in the following slightly imprecise:
	
	\begin{lem}
	\label{monotone1/2energydecayinglemma}
		Let $u$ be a sufficiently regular solution of the $1/2$-harmonic gradient flow as previously defined with $u(0, \cdot) = u_0$. Then the following holds for all $T \geq 0$:
		$$\frac{1}{2} \| (-\Delta)^{1/4} u(T) \|_{L^2(S^1)}^{2} \leq \frac{1}{2} \| (-\Delta)^{1/4} u_0 \|_{L^2(S^1)}^{2}$$
		In fact, the energy $T \mapsto \| (-\Delta)^{1/4} u(T) \|_{L^2(S^1)}$ monotonically decreases in $T$.
	\end{lem}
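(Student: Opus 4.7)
The plan is to formalise the computation already sketched in the paragraph preceding the lemma. The key idea is to test the PDE
$$u_t + (-\Delta)^{1/2} u = u \, | d_{1/2} u |^2$$
against $u_t$ itself and integrate over $[T_1, T_2] \times S^1$ for arbitrary $0 \leq T_1 \leq T_2$. The gain comes from two observations: the right-hand side is killed by the spherical constraint, and the second term on the left is a perfect time derivative of the $1/2$-energy.

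More precisely, I would first exploit that $|u|^2 \equiv 1$ almost everywhere (since $u$ takes values in $S^{n-1}$). Differentiating in $t$ gives $u \cdot u_t = 0$ a.e., so for sufficiently regular $u$ the pairing
$$\int_{T_1}^{T_2} \int_{S^1} u \, | d_{1/2} u |^2 \cdot u_t \, dx \, dt = \int_{T_1}^{T_2} \int_{S^1} (u \cdot u_t) \, | d_{1/2} u |^2 \, dx \, dt = 0.$$
For regular $u$ the pointwise identity $u \cdot u_t = 0$ can be obtained from the Leibniz rule applied to $|u|^2$; for the purposes of this lemma, where the regularity is deliberately left vague, it is enough to treat $u$ as smooth, which is exactly the situation the lemma is adapted to.

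Next, I would rewrite the second left-hand term by fractional integration by parts and self-adjointness of $(-\Delta)^{1/4}$:
$$\int_{S^1} (-\Delta)^{1/2} u \cdot u_t \, dx = \int_{S^1} (-\Delta)^{1/4} u \cdot (-\Delta)^{1/4} u_t \, dx = \frac{1}{2} \frac{d}{dt} \| (-\Delta)^{1/4} u \|_{L^2(S^1)}^2.$$
Combining the two ingredients and integrating from $T_1$ to $T_2$ via the fundamental theorem of calculus yields the identity
$$\tfrac{1}{2} \| (-\Delta)^{1/4} u(T_2) \|_{L^2(S^1)}^2 + \int_{T_1}^{T_2} \| u_t \|_{L^2(S^1)}^2 \, dt = \tfrac{1}{2} \| (-\Delta)^{1/4} u(T_1) \|_{L^2(S^1)}^2.$$
Since the integral term is nonnegative, the map $T \mapsto \| (-\Delta)^{1/4} u(T) \|_{L^2(S^1)}$ is monotone non-increasing, and taking $T_1 = 0$, $T_2 = T$ gives the stated bound by $\| (-\Delta)^{1/4} u_0 \|_{L^2(S^1)}$.

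The only subtle point, and the reason the author hedges with \textbf{sufficiently regular} and \emph{slightly imprecise}, is justifying the two displayed identities above at the level of regularity actually available. For a classical (e.g. $C^\infty$) solution, every step is immediate: $u_t$ is a legitimate test function, the time derivative can be pulled through the $L^2$ inner product, and the fractional integration by parts is standard. For an energy-class solution one would need a mollification/density argument together with the regularity information that $u_t \in L^2(\R_+; L^2(S^1))$ and $(-\Delta)^{1/4} u \in L^\infty(\R_+; L^2(S^1))$; since the lemma is explicitly stated only for regular solutions, this extra work is not required here, and the proof reduces to the three-line calculation above.
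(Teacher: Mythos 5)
Your proposal is correct and is essentially identical to the computation the paper presents in the paragraph immediately preceding the lemma: test the equation against $u_t$, use $u\cdot u_t = 0$ from the sphere constraint to kill the right-hand side, and recognize $\int_{S^1} (-\Delta)^{1/2}u\cdot u_t\,dx$ as the time derivative of the $1/2$-energy. The only cosmetic difference is that you integrate over a general interval $[T_1,T_2]$ to make the monotonicity explicit, whereas the paper takes $T_1 = 0$ and then states monotonicity as a consequence.
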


	\subsubsection{Improved Regularity of the Solution}
	
	We would like to show how we may obtain the required improvement in regularity for energy-class solutions, i.e. solutions which do a-priori not satisfy a $L^2$-local bound on the first derivative in space-direction, to the fractional gradient flow \eqref{gradflowuv} in a similar manner as in \cite{riv}. The key idea is that we may fix some time $t$ and consider the corresponding equation for fixed time to obtain improved regularity. Namely, we will obtain the following result:
	
	\begin{thm}
	\label{uniqueinenergyclass}
		Let $u: \R_{+} \times S^1 \to S^{n-1} \subset \R^n$ be a solution of the weak fractional harmonic gradient flow \eqref{gradflowuv} with initial datum $u_0 \in H^{1/2}(S^1)$ and satisfying the following regularity assumptions:
		$$u \in L^{\infty}(\R_{+}; H^{1/2}(S^1)); \quad u_t \in L^{2}(\R_{+}; L^{2}(S^{1}))$$
		Then there exists $\varepsilon > 0$ such that among all such $u$ satisfying the smallness condition:
		$$\| ( - \Delta)^{1/4} u(t) \|_{L^2(S^1)} \leq \varepsilon, \quad \forall t \in \R_{+},$$
		the solution to the fractional harmonic gradient flow \eqref{gradflowuv} with initial datum $u_0$ is unique.
	\end{thm}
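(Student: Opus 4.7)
The plan is to reduce Theorem \ref{uniqueinenergyclass} to the higher-regularity uniqueness statement of Theorem \ref{uniqueness} by showing that any energy-class solution with $\|(-\Delta)^{1/4}u(t)\|_{L^2}\le\varepsilon$ uniformly in $t$ actually lies in $L^2_{loc}(\R_{+};H^1(S^1))$. Once this is done, Theorem \ref{uniqueness} applies directly --- its energy-decay hypothesis is implied by, and in fact weaker than, the uniform smallness, as the remark following that theorem observes --- and yields uniqueness. The argument proceeds in two moves: first a fixed-time a priori bound $u(t)\in H^1(S^1)$ for a.e.\ $t>0$, then a fractional Ladyzhenskaya-type absorption turning this pointwise regularity into the desired $L^2$-in-time control.

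The fixed-time bound is the a priori regularity proposition stated in the introduction, which exploits the antisymmetric, $1/2$-divergence-free structure available in the sphere case. Concretely, the constraint $|u|\equiv 1$ produces the nonlocal tangency identity $(u(x)+u(y))\cdot d_{1/2}u(x,y)\equiv 0$, which allows one to rewrite the nonlinearity componentwise as $u^i|d_{1/2}u|^2=\sum_j F^{ij}\cdot d_{1/2}u^j$ with an antisymmetric kernel $F^{ij}(x,y)\in L^2_{od}(S^1\times S^1)$ satisfying $\div_{1/2}F^{ij}=0$. The fractional Wente Lemma \ref{fractionalwentelemmaons1insec2}, combined with a Da Lio--Rivi\`ere type bootstrap in the spirit of \cite{dalioriv2}, then forces $u(t)\in H^1(S^1)$ at a.e.\ $t$. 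Granting this, take $L^2$-norms in
\begin{equation*}
(-\Delta)^{1/2}u(t)=u(t)|d_{1/2}u(t)|^2-u_t(t)
\end{equation*}
and use $|u|\equiv 1$ to get $\|(-\Delta)^{1/2}u(t)\|_{L^2}\le\|u_t(t)\|_{L^2}+\||d_{1/2}u(t)|\|_{L^4(S^1)}^2$. Theorem \ref{schiwangthm1.4} applied on $S^1$ with $s=1/2$, $p=4$, $q=2$ (the condition $p>q/(1+sq)=1$ is satisfied) identifies $\||d_{1/2}u|\|_{L^4(S^1)}$ with $\|(-\Delta)^{1/4}u\|_{L^4(S^1)}$, and Lemma \ref{ladys1} applied to $(-\Delta)^{1/4}u(t)$ together with the uniform smallness yields
\begin{equation*}
\||d_{1/2}u(t)|\|_{L^4}^2\lesssim\|(-\Delta)^{1/4}u(t)\|_{L^2}\cdot\|(-\Delta)^{1/4}u(t)\|_{H^{1/2}}\lesssim\varepsilon^2+\varepsilon\|(-\Delta)^{1/2}u(t)\|_{L^2}.
\end{equation*}
For $\varepsilon$ small enough the $\varepsilon\|(-\Delta)^{1/2}u(t)\|_{L^2}$ term is absorbed on the left, producing $\|(-\Delta)^{1/2}u(t)\|_{L^2}^2\lesssim\|u_t(t)\|_{L^2}^2+\varepsilon^4$. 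Since $u_t\in L^2(\R_{+};L^2(S^1))$, integrating on any finite interval $[0,T]$ delivers $u\in L^2_{loc}(\R_{+};H^1(S^1))$, and Theorem \ref{uniqueness} finishes the proof.

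The main obstacle is the preliminary fixed-time $H^1$ regularity: the nonlocal construction of the antisymmetric, $1/2$-divergence-free kernel $F^{ij}$ from $u|d_{1/2}u|^2$ and the clean invocation of Lemma \ref{fractionalwentelemmaons1insec2} needed to establish it. The algebraic rewriting of $u|d_{1/2}u|^2$ as $F\cdot d_{1/2}u$ requires a careful symmetric/antisymmetric decomposition of the double-integral kernels, and verifying $\div_{1/2}F^{ij}=0$ distributionally is where the special sphere structure pays off (no change of gauge, in contrast to \cite{riv2} for general targets). By comparison, the Ladyzhenskaya absorption above is routine, using only the $S^1$-version of Theorem \ref{schiwangthm1.4} (deferred to Appendix B) and Lemma \ref{ladys1}.
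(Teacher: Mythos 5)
Your proposal mirrors the paper's own two-step architecture for this theorem: (i) establish the fixed-time regularity $u(t)\in H^1(S^1)$ for a.e.\ $t$ via an antisymmetric, $1/2$-divergence-free potential and the fractional Wente Lemma \ref{fractionalwentelemmaons1insec2}, then (ii) upgrade to $u\in L^2_{loc}(\R_{+};H^1(S^1))$ by the Ladyzhenskaya-type absorption and invoke Theorem \ref{uniqueness}. This is exactly what the paper does: step (ii) is Proposition \ref{uniqueinenergyclassunderh1assumption}, and step (i) is Lemma \ref{reglemma}; your reading of the remark after Theorem \ref{uniqueness} (uniform smallness subsumes the energy-decay hypothesis) is also correct.

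One imprecision in your step (i) is worth flagging. The tangency identity does \emph{not} reduce the nonlinearity to the pure form $u^i|d_{1/2}u|^2=\sum_j\Omega_{ij}\cdot d_{1/2}u^j$ with $\Omega$ antisymmetric and $\div_{1/2}\Omega=0$. Adding and subtracting $u^k(x)\,d_{1/2}u^i(x,y)$ and then using $(u(x)+u(y))\cdot d_{1/2}u(x,y)=0$ produces
\begin{equation*}
u^i|d_{1/2}u|^2(x) \;=\; \Omega_{ik}\cdot d_{1/2}u^k(x) \;+\; T^i(u)(x), \qquad T^i(u)(x) \;=\; c\int_{S^1} d_{1/2}u^i(x,y)\,|d_{1/4}u(x,y)|^2\,\frac{dy}{|x-y|},
\end{equation*}
and the residual $T(u)$ --- a three-term-commutator-type contribution --- is \emph{not} reachable by the Wente lemma: writing it as $G\cdot d_{1/2}u^i$ with scalar kernel $G(x,y)=c\,|d_{1/4}u(x,y)|^2$, one sees that $G$ has no reason to be $1/2$-divergence-free. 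The paper handles $T$ separately by the estimates \eqref{estimateforTpg2}, \eqref{estimateforTpe2}, \eqref{estimateforTsmooth} fed into a Neumann-series perturbation of the solution operator $\tau$ (the Rivi\`ere-style invertibility argument for $p=2$ and $p>2$). Your appeal to a ``Da Lio--Rivi\`ere bootstrap'' points at the right machinery, but the explicit decomposition you wrote down would not close as stated; the $T(u)$ term must be carried through. The Ladyzhenskaya absorption and the reduction to Theorem \ref{uniqueness} in your second step are correct and coincide with the paper.
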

	
	If we assume that the energy is bounded by some sufficiently small $\varepsilon > 0$, then it is sufficient to show that $u(t) \in H^{1}(S^1)$ for almost every $t \in \R_{+}$. In fact, the following holds:
	
	\begin{prop}
	\label{uniqueinenergyclassunderh1assumption}
		Let $u: \R_{+} \times S^1 \to S^{n-1} \subset \R^n$ be a solution of the weak fractional harmonic gradient flow \eqref{gradflowuv} with initial datum $u_0 \in H^{1/2}(S^1)$ and satisfying the following regularity assumptions:
		$$u \in L^{\infty}(\R_{+}; H^{1/2}(S^1)); \quad u_t \in L^{2}(\R_{+}; L^{2}(S^{1})); \quad u(t) \in H^{1}(S^1) \text{ for a.e. } t \in \R_{+}$$
		Then there exists $\varepsilon > 0$ such that among all such $u$ satisfying the smallness condition:
		$$\| ( - \Delta)^{1/4} u(t) \|_{L^2(S^1)} \leq \varepsilon, \quad \forall t \in \R_{+},$$
		the solution to the fractional harmonic gradient flow \eqref{gradflowuv} with initial datum $u_0$ is unique.
	\end{prop}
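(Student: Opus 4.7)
The plan is to upgrade the assumed pointwise-in-time hypothesis $u(t)\in H^1(S^1)$ into a locally integrable spatial bound $u\in L^2_{\mathrm{loc}}(\R_+;H^1(S^1))$, after which Theorem \ref{uniqueness} yields uniqueness at once. The engine is that, once $\varepsilon$ is small, the equation itself controls $\|(-\Delta)^{1/2}u(t)\|_{L^2}$ by $\|u_t(t)\|_{L^2}$ at each fixed time.

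First, I would fix $t\in\R_+$ for which $u(t)\in H^1(S^1)$ and read \eqref{gradflowuv} strongly at this $t$. Rearranging and using $|u(t,\cdot)|=1$ almost everywhere,
$$\|(-\Delta)^{1/2} u(t)\|_{L^2(S^1)} \;\leq\; \|u_t(t)\|_{L^2(S^1)} + \bigl\||d_{1/2} u(t)|^2\bigr\|_{L^2(S^1)}.$$
Next, I would control the quadratic term by the same Ladyzhenskaya-type chain used to derive \eqref{estd1/2u}: combining the equivalence $\dot{W}^{1/2,(4,2)}(S^1)=\dot{F}^{1/2}_{4,2}(S^1)$ from Theorem \ref{schiwangthm1.4} (in the form established on $S^1$ in Appendix B) with Lemma \ref{ladys1} applied to $(-\Delta)^{1/4}u(t)\in H^{1/2}(S^1)$,
$$\bigl\||d_{1/2} u(t)|^2\bigr\|_{L^2(S^1)}^2 \;=\; \int_{S^1}|d_{1/2} u(t)|^4\,dx \;\lesssim\; \|(-\Delta)^{1/4} u(t)\|_{L^2}^2\Bigl(\|(-\Delta)^{1/4} u(t)\|_{L^2}^2 + \|(-\Delta)^{1/2} u(t)\|_{L^2}^2\Bigr).$$
Inserting the smallness hypothesis $\|(-\Delta)^{1/4} u(t)\|_{L^2}\leq\varepsilon$ and choosing $\varepsilon>0$ small enough that the resulting $C\varepsilon^2\|(-\Delta)^{1/2}u(t)\|_{L^2}^2$ can be absorbed into the left-hand side of the squared form of the previous display, I would obtain
$$\|(-\Delta)^{1/2} u(t)\|_{L^2(S^1)}^2 \;\lesssim\; \|u_t(t)\|_{L^2(S^1)}^2 + \varepsilon^4 \quad\text{for a.e. } t\in\R_+.$$

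Integrating this inequality over any finite $[0,T]$ against the hypothesis $u_t\in L^2(\R_+;L^2(S^1))$ produces $u\in L^2([0,T];H^1(S^1))$ for every $T<\infty$, hence $u\in L^2_{\mathrm{loc}}(\R_+;H^1(S^1))$. The same reasoning applies verbatim to any second solution $v$ of the proposition's class, placing both $u$ and $v$ inside the regularity framework of Theorem \ref{uniqueness}; the uniform bound $\sup_{t\in\R_+}\|(-\Delta)^{1/4} u(t)\|_{L^2}\leq\varepsilon$ replaces the monotone energy decay assumption there, as explicitly noted in the remark immediately following its statement. Theorem \ref{uniqueness} then yields $u=v$. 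The step I expect to demand the most care is the absorption: the choice of $\varepsilon$ must dominate the product of the constants coming from the Triebel-Lizorkin identification on $S^1$ and from Lemma \ref{ladys1}, but once Appendix B is in place there is no conceptual obstacle.
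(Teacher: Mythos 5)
Your proof is correct and follows essentially the same route as the paper's: fix a good time $t$, read the equation strongly, bound the quadratic term $\||d_{1/2}u(t)|^2\|_{L^2}$ via the Ladyzhenskaya/Triebel--Lizorkin chain, absorb it into the left side using the smallness of $\|(-\Delta)^{1/4}u(t)\|_{L^2}$, integrate in $t$, and invoke Theorem \ref{uniqueness}. The only cosmetic difference is that the paper estimates $\|u(t)\|_{H^1}$ directly through the Riesz transform rather than $\|(-\Delta)^{1/2}u(t)\|_{L^2}$, which amounts to the same thing since $\|u(t)\|_{L^2}$ is trivially bounded by $|u|=1$; your observation that the uniform energy bound is an admissible substitute for monotone decay in Theorem \ref{uniqueness} is exactly the remark the paper makes after that theorem.
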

	
	\begin{proof}
	To verify this, let us observe that if $u(t) \in H^1(S^1)$ for almost every $t \in \mathbb{R}_{+}$, we may deduce for a fixed time $t$:
	$$(-\Delta)^{1/2} u(t) = u(t) | d_{1/2} u(t) |^{2} - \partial_{t} u(t)$$
	Hence, by standard elliptic estimates for the fractional Laplacian or simply observing that with $\mathcal{R}$ being the Riesz transform, we have:
	\begin{equation}
		\nabla u(t) = \mathcal{R} \left( u(t) | d_{1/2} u(t) |^{2} - \partial_{t} u(t) \right)
	\end{equation}
	Keeping in mind that $\mathcal{R}$ is a continuous linear operator on $L^{2}(S^1)$, we are led to the following estimate:
	\begin{align}
	\label{estforh1}
		\| u(t) \|_{H^{1}(S^1)}^2 	&\leq C \left( \| u(t) \|_{L^2}^2 + \| | d_{1/2} u(t) |^{2} \|_{L^2}^2 + \| \partial_{t} u(t) \|_{L^2}^2 \right) \notag \\
							&\leq C \left( 1 + \| | d_{1/2} u(t) |^{2} \|_{L^2}^2 + \| \partial_{t} u(t) \|_{L^2}^2 \right),
	\end{align}
	where we used $u(t) \in S^{n-1}$ almost everywhere for almost every time $t$. It is clear that regarding local $L^2$-integrability with respect to time, it thus remains to study the following contribution:
	$$\| | d_{1/2} u(t) |^{2} \|_{L^2}^2$$
	Using the same ideas as in the proof of \eqref{estd1/2u} for the uniqueness statement Theorem \ref{uniqueness} (which are proved in Appendix A or rely on the second part of Theorem \ref{schiwangthm1.4} for $S^1$, see also Appendix B), we may estimate this term by:
	$$\| | d_{1/2} u(t) |^{2} \|_{L^2}^2 \leq C' \| u(t) \|_{H^{1/2}}^{2} \| u(t) \|_{H^{1}}^{2}$$
	By applying this inequality to $u - \hat{u}(0)$ instead of $u$, we may replace the $H^{1/2}$- and $H^{1}$-norms by the corresponding seminorms:
	$$\| | d_{1/2} u(t) |^{2} \|_{L^2}^2 \leq C' \| u(t) \|_{\dot{H}^{1/2}}^{2} \| u(t) \|_{\dot{H}^{1}}^{2}$$
	We emphasise that adding a constant to $u$ does not affect the LHS of the estimate above.
	Therefore, we have the energy term appearing:
	$$\| | d_{1/2} u(t) |^{2} \|_{L^2}^2 \leq C' \| (-\Delta)^{1/4}u(t) \|_{L^{2}}^{2} \| u(t) \|_{H^{1}}^{2} \leq C' \varepsilon \cdot \| u(t) \|_{H^{1}}^{2},$$
	where $\varepsilon > 0$ is an a priori energy estimate as in \cite{riv} and we may still choose $\varepsilon > 0$ appropriately. Indeed, if $\varepsilon > 0$ is sufficiently small, for example $\varepsilon \leq 1/(2C C')$, we may absorb this term in the left hand side of \eqref{estforh1} to arrive at:
	\begin{equation}
		(1 - CC' \varepsilon ) \cdot \| u(t) \|_{H^{1}(S^1)}^2 \leq \tilde{C} \left( 1 + \| \partial_{t} u(t) \|_{L^2}^2 \right) \Rightarrow \| u(t) \|_{H^1} \leq \frac{C}{1 - C' C \varepsilon} \left( 1 + \| \partial_{t} u(t) \|_{L^2} \right),
	\end{equation}
	which thus yields an estimate for the $H^1$-norm. We observe that hence, by the integrability properties of $\partial_{t} u$ and the constant function (which rely on the compactness of $S^1$):
	\begin{equation}
	\label{reglemmaestimatewithint}
		u \in L^{2}_{loc}(\R_{+}; H^{1}(S^1))
	\end{equation}
	Thus, we may apply the previous uniqueness statement in Theorem \ref{uniqueness} even if we merely know:
	$$u \in L^{\infty}(\R_{+}; H^{1/2}(S^1)); \quad u_t \in L^{2}(\R_{+}; L^{2}(S^{1})); \quad \| ( - \Delta)^{1/4} u(0) \|_{L^2(S^1)} \leq \varepsilon,$$
	with $\varepsilon > 0$ sufficiently small as above and assuming the energy decrease holds, provided we get increased regularity for $u(t)$.
	\end{proof} 
	
	In particular, if we assume that the $1/2$-energy is non-increasing in time, as seen to be true for smooth solutions to the fractional harmonic gradient flow in Lemma \ref{monotone1/2energydecayinglemma}, the smallness condition could be rephrased as:
	$$\| ( - \Delta)^{1/4} u_0 \|_{L^2(S^1)} \leq \varepsilon$$
	
	Consequently, all that remains is to deduce $H^1$-regularity for a.e. fixed time to apply Proposition \ref{uniqueinenergyclassunderh1assumption} and deduce Theorem \ref{uniqueinenergyclass}. The following Lemma in the spirit of \cite{riv} takes care of this by investigating the regularity for a fixed time $t \in \mathbb{R}_{+}$:
	
	\begin{lem}
	\label{reglemma}
		Let $f \in L^{2}(S^1)$ and assume that $u \in H^{1/2}(S^1)$ solves the following equation:
		\begin{equation}
		\label{eqforimprovedreg}
			(-\Delta)^{1/2} u = u | d_{1/2} u |^{2} + f.
		\end{equation}
		Then, we have the following improved regularity property:
		$$u \in H^{1}(S^1).$$
	\end{lem}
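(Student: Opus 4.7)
The plan is to follow the spirit of the regularity argument of Rivi\`ere \cite{riv,riv2} adapted to the non-local setting: expose an antisymmetric, $1/2$-divergence-free structure inside the nonlinear term $u|d_{1/2}u|^2$, gain integrability via the fractional Wente estimate Lemma \ref{fractionalwentelemmaons1insec2}, and then bootstrap until the right hand side of the equation lies in $L^2(S^1)$.

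The first step is the antisymmetric reformulation. Since $|u|^2 \equiv 1$, the fractional Leibniz rule yields the key orthogonality $(u(x)+u(y))\cdot d_{1/2}u(x,y) \equiv 0$. Motivated by this, one sets
$$\Omega^{ij}(x,y) := \tfrac{1}{2}\bigl[(u^i(x)+u^i(y))\,d_{1/2}u^j(x,y) - (u^j(x)+u^j(y))\,d_{1/2}u^i(x,y)\bigr],$$
which is antisymmetric both in the indices $(i,j)$ and in the variables $(x,y)$, belongs to $L^2_{od}(S^1 \times S^1)$, and, thanks to the orthogonality above, is $1/2$-divergence-free in the sense required by Lemma \ref{fractionalwentelemmaons1insec2}. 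A short direct computation using only $|u|=1$ and the fractional Leibniz rule identifies $u^i|d_{1/2}u|^2(x)$ with $\Omega^{ij}\cdot d_{1/2}u^j(x)$ up to a cubic remainder that is pointwise controlled by $|d_{1/2}u|^2$ and may be absorbed.

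Next I would apply Lemma \ref{fractionalwentelemmaons1insec2} to the pair $(\Omega^{ij}, u^j)$, concluding that $\Omega^{ij} \cdot d_{1/2}u^j \in H^{-1/2}(S^1)$, with the finer Hardy-space $\mathcal{H}^1$-control obtained by transferring Lemma \ref{fractionalwentelemma} from $\mathbb{R}$ to $S^1$ via the stereographic/extension techniques recalled in Section 2. Combined with $f \in L^2 \hookrightarrow H^{-1/2}$, this places $(-\Delta)^{1/2}u$ in a space strictly better than $H^{-1/2}$. Through the $\mathcal{H}^1$--$BMO$ duality together with the identifications of Theorem \ref{schiwangthm1.4} between $\dot{F}^{1/2}_{p,2}(S^1)$ and $\dot{W}^{1/2,(p,2)}(S^1)$, this improvement translates into $u \in \dot{F}^{1/2}_{p,2}(S^1) = \dot{W}^{1/2,(p,2)}(S^1)$ for some $p>2$, and hence $|d_{1/2}u| \in L^p(S^1)$.

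Finally I would bootstrap. From $|d_{1/2}u| \in L^p$ with $p>2$ and $|u|=1$, the equation places $(-\Delta)^{1/2}u$ in $L^{\min(p/2,2)}(S^1)$, so that $u \in W^{1,\min(p/2,2)}(S^1)$. The one-dimensional Sobolev embedding on $S^1$ then promotes $|d_{1/2}u|$ to $L^q$ with $q = 2\min(p/2,2)/(2-\min(p/2,2))$, which is strictly greater than $p$ as long as $p<4$; iterating finitely many times one reaches $|d_{1/2}u|^2 \in L^2$, hence $(-\Delta)^{1/2}u \in L^2$, and therefore $u \in H^1(S^1)$. The main difficulty I expect is Stage~2: rigorously verifying the $1/2$-divergence-freeness of $\Omega$ on $S^1$ and converting the Hardy-space output of Lemma \ref{fractionalwentelemmaons1insec2} into a genuine Triebel-Lizorkin gain $\dot{F}^{1/2}_{2,2} \to \dot{F}^{1/2}_{p,2}$ with some $p>2$; in the non-local setting the bookkeeping between $L^2_{od}$-type spaces and the Triebel-Lizorkin scale is more subtle than in the local Rivi\`ere argument and leans heavily on the identifications proven in Appendix B.
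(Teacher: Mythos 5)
Your high-level plan (antisymmetric potential, fractional Wente estimate, promotion to $\dot F^{1/2}_{p,2}$ with $p>2$, then conclude) matches the paper's, but two steps as written contain genuine gaps. The first is the claim that the ``cubic remainder \ldots may be absorbed.'' With your symmetrized $\Omega$, the orthogonality $(u(x)+u(y))\cdot d_{1/2}u(x,y)=0$ gives exactly
\begin{equation*}
u^i(x)\,|d_{1/2}u|^2(x)-\Omega^{ij}\cdot d_{1/2}u^j(x)=\frac{1}{2}\int_{S^1} d_{1/2}u^i(x,y)\,|d_{1/4}u(x,y)|^2\,\frac{dy}{|x-y|},
\end{equation*}
which is precisely the trilinear term $T(u)$ isolated in the paper. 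It shares the same pointwise bound by $|d_{1/2}u|^2$ and the same a priori $L^1$ membership as the main term, so there is no smallness to exploit and no sense in which it can be dropped; the paper handles it by proving dedicated trilinear estimates for $T(u,v,w)$ on the $\dot F^{1/2}_{p,2}$ scale (\eqref{estimateforTpg2}--\eqref{estimateforTsmooth}) and, after replacing $u$ by a smooth $\tilde u$, feeding $T(v,\tilde u-u,\tilde u-u)$ into a perturbation argument.

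The second gap is the passage from the Wente output to ``$u\in\dot F^{1/2}_{p,2}(S^1)$ for some $p>2$.'' Lemma \ref{fractionalwentelemmaons1insec2} places $\Omega\cdot d_{1/2}u$ in $H^{-1/2}(S^1)$, but $(-\Delta)^{-1/2}$ of an $H^{-1/2}$ source is only $H^{1/2}$, which is where you started; the $\mathcal H^1$--$BMO$ duality does not, by itself, raise the Lebesgue exponent in the $\dot F^{1/2}_{p,2}$ scale. The paper's actual mechanism is indirect: it introduces the perturbed solution operator $\tau=\mathrm{Id}+(-\Delta)^{-1/2}\big[(\Omega_0-\Omega)\cdot d_{1/2}(\cdot)+T(\cdot,\tilde u-u,\tilde u-u)\big]$, proves $\tau$ is invertible on $\dot F^{1/2}_{p,2}(S^1)$ both for $p=2$ (using the Wente estimate) and for $p>2$ (using H\"older, Sobolev embeddings, and smoothness of $\Omega_0,\tilde u$), checks that the fixed right-hand side $\Omega_0\cdot d_{1/2}u+T(u,u,\tilde u)+T(u,\tilde u,u-\tilde u)+f$ lies in $L^r$ for every $r<2$ (hence in $\dot F^{-1/2}_{p,2}$ for any $p<\infty$), and then identifies the unique $\dot F^{1/2}_{4,2}$-solution with $u-\hat u(0)$ by uniqueness at $p=2$. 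This yields $|d_{1/2}u|\in L^4$ directly, so $(-\Delta)^{1/2}u\in L^2$ in a single step; your iterative bootstrap is correct in form but has no valid starting point once the Wente step is seen to give no improvement in $p$.
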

	
	The key point in the proof will be the appearance of an anti-symmetric potential $\Omega$ satisfying $\div_{1/2} \Omega = 0$ to which we can apply the non-local Wente-type inequality in Lemma \ref{fractionalwentelemma} or \eqref{fracwenteons1} in Appendix B. If we apply the result in Lemma \ref{reglemma} to $f = \partial_{t} u(t)$ and $u = u(t)$, we may deduce $u(t) \in H^{1}(S^1)$ for almost every $t$, given a sufficiently small bound on the $1/2$-energy at a given time $t$. Thus we may derive Theorem \ref{uniqueinenergyclass} by combining the statements in Proposition \ref{uniqueinenergyclassunderh1assumption} and Lemma \ref{reglemma}.
	
	In addition, let us observe that for given $f \in L^2(S^1)$, this seems to be an optimal result, as any solution $(-\Delta)^{1/2} u = f$ would satisfy $u \in H^{1}(S^1)$, but no higher regularity can be deduced in general.
	
	\begin{proof}
		As in \cite{mazoschi}, we know that there exists a map $\Omega \in L^{2}_{od}(S^1 \times S^1;\R^{n \times n})$ depending on $u$, such that $\Omega^{T} = - \Omega$ and $\div_{1/2} \Omega = 0$, such that we derive from \eqref{eqforimprovedreg}:
		\begin{equation}
		\label{eq01}
			(-\Delta)^{1/2} u = \Omega \cdot d_{1/2} u + T(u) + f,
		\end{equation}
		where $T(u)$ is as in \cite{mazoschi}. In fact, we have by using the components $u = (u^{1}, \ldots, u^{n})$ and Einstein's summation convention:
		\begin{align}
			u^{i}(x) d_{1/2} u^{k}(x,y) d_{1/2}u^{k}(x,y)	&= u^{i}(x) d_{1/2} u^{k}(x,y) d_{1/2}u^{k}(x,y) - u^{k}(x) d_{1/2} u^{i}(x,y) d_{1/2} u^{k}(x,y) \notag \\
											&+ u^{k}(x) d_{1/2} u^{i}(x,y) d_{1/2} u^{k}(x,y) \notag \\
											&=: \Omega_{ik}(x,y) d_{1/2} u^{k}(x,y) + u^{k}(x) d_{1/2} u^{i}(x,y) d_{1/2} u^{k}(x,y) \notag \\
											&= \Omega_{ik}(x,y) d_{1/2} u^{k}(x,y) + \frac{1}{2} d_{1/2} u^{i}(x,y) | d_{1/4} u^{k}(x,y) |^2 \notag \\
											&=: \Omega_{ik}(x,y) d_{1/2} u^{k}(x,y) + T^{i}(u)
		\end{align}
		Thus, the following formula for every $i = 1, \ldots n$ holds:
		$$T^{i}(u) := \sum_{k=1}^{n} \int_{S^1} d_{1/2} u^{i}(x,y) | d_{1/4} u^{k}(x,y) |^2 \frac{dy}{| x-y |}, \quad T(u) = (T^{1}(u), \ldots, T^{n}(u)),$$
		and moreover:
		$$\Omega_{ik}(x,y) := u^{i}(x) d_{1/2} u^{k}(x,y) - u^{k}(x) d_{1/2} u^{i}(x,y), \quad \forall i,k \in \{ 1, \ldots n \}$$
		We introduce the following notion $T(u,v,w) := (T^{1}(u,v,w), \ldots, T^{n}(u,v,w))$:
		\begin{equation}
			T^{i}(u,v,w) := \sum_{k=1}^{n} \int_{S^1} d_{1/2} u^{i}(x,y)  d_{1/4} v^{k}(x,y)  d_{1/4}w^{k}(x,y) \frac{dy}{| x-y |}, \quad \forall i \in \{ 1, \ldots, n \},
		\end{equation}
		and clearly $T(u,u,u) = T(u)$. We have the following estimates, refining the ones already found in \cite{mazoschi}:\\
		
		Assume that $p > 2$ as well as $u \in \dot{F}^{1/2}_{p,2}(S^1)$ and $v,w \in \dot{H}^{1/2}(S^1)$. Then we have by using H\"older's inequality
		\begin{align}
		\label{estimateforTpg2}
			\| T(u,v,w) \|_{L^{\frac{2p}{p+2}}(S^1)}	&\leq \left( \int_{S^1} \mathcal{D}_{1/2, 2}(u) \mathcal{D}_{1/4, 4}(v) \mathcal{D}_{1/4,4}(w) dx \right)^{\frac{p+2}{2p}} \notag \\
											&\lesssim \| u \|_{\dot{W}^{1/2,(p,2)}} \| v \|_{\dot{W}^{1/4,(4,4)}} \| w \|_{\dot{W}^{1/4, (4,4)}} \notag \\
											&\lesssim \| u \|_{\dot{F}^{1/2}_{p,2}} \| v \|_{\dot{F}^{1/4}_{4,4}} \| w \|_{\dot{F}^{1/4}_{4,4}} \notag \\
											&\lesssim \| u \|_{\dot{F}^{1/2}_{p,2}} \| v \|_{\dot{F}^{1/4}_{4,2}} \| w \|_{\dot{F}^{1/4}_{4,2}} \notag \\
											&\lesssim \| u \|_{\dot{F}^{1/2}_{p,2}} \| v \|_{\dot{F}^{1/2}_{2,2}} \| w \|_{\dot{F}^{1/2}_{2,2}} \notag \\
											&= \| u \|_{\dot{F}^{1/2}_{p,2}} \| v \|_{\dot{H}^{1/2}} \| w \|_{\dot{H}^{1/2}},
		\end{align}
		where we used the second part of Theorem \ref{schiwangthm1.4} for the circle $S^1$, see also Appendix B. Furthermore, standard embeddings for Triebel-Lizorkin spaces were used in the estimates above. One should notice that:
		$$4 > \frac{1 \cdot 4}{1+\frac{1}{4} 4} = 2,$$
		meaning that the second part of Theorem \ref{schiwangthm1.4} applies to $\cdot{F}^{1/4}_{4,4}(S^1)$. This also implies thanks to the Sobolev-type embedding:
		$$\dot{F}^{1/2}_{\frac{p}{p-1},2}(S^1) \hookrightarrow L^{\frac{2p}{p-2}}(S^1), \quad \forall p > 2,$$
		that we have an estimate of the following form by \eqref{estimateforTpg2} and using duality of Triebel-Lizorkin spaces:
		\begin{equation}
		\label{estimateforTpg2dual}
			\| T(u,v,w) \|_{\dot{F}^{-1/2}_{p,2}(S^1)} \lesssim \| u \|_{\dot{F}^{1/2}_{p,2}(S^1)} \| v \|_{\dot{H}^{1/2}(S^1)} \| w \|_{\dot{H}^{1/2}(S^1)}
		\end{equation}
		Moreover, if $u,v,w \in \dot{H}^{1/2}(S^1)$, we also know by first switching $x,y$ and then using H\"older's inequality and Sobolev-type embeddings:
		\begin{align}
			\int_{S^1} 		&\int_{S^1} \varphi^{i}(x) d_{1/2} u^{i}(x,y)  d_{1/4} v^{k}(x,y)  d_{1/4}w^{k}(x,y) \frac{dy dx}{| x-y |}	 \notag \\
						&= \int_{S^1} \int_{S^1} (\varphi^{i}(x) - \varphi^{i}(y)) d_{1/2} u^{i}(x,y)  d_{1/4} v^{k}(x,y)  d_{1/4}w^{k}(x,y) \frac{dy dx}{| x-y |} \notag \\
						&\lesssim \int_{S^1} \mathcal{D}_{1/2,2}(\varphi) \mathcal{D}_{1/6,6}(u) \mathcal{D}_{1/6,6}(v) \mathcal{D}_{1/6,6}(w) dx \notag \\
						&\lesssim \| \varphi \|_{\dot{W}^{1/2,(2,2)}} \| u \|_{\dot{W}^{1/6,(6,6)}} \| v \|_{\dot{W}^{1/6,(6,6)}} \| w \|_{\dot{W}^{1/6,(6,6)}} \notag \\
						&\lesssim \| \varphi \|_{\dot{F}^{1/2}_{2,2}} \| u \|_{\dot{F}^{1/6}_{6,6}} \| v \|_{\dot{F}^{1/6}_{6,6}} \| w \|_{\dot{F}^{1/6}_{6,6}} \notag \\
						&\lesssim \| \varphi \|_{\dot{H}^{1/2}} \| u \|_{\dot{H}^{1/2}} \| v \|_{\dot{H}^{1/2}} \| w \|_{\dot{H}^{1/2}},
		\end{align}
		again using Theorem \ref{schiwangthm1.4} on the circle as well as:
		$$6 > \frac{1 \cdot 6}{1 + \frac{1}{6} 6} = 3,$$
		justifying the application in this case. This immediately yields:
		\begin{equation}
		\label{estimateforTpe2}
			\| T(u,v,w) \|_{\dot{H}^{-1/2}(S^1)} \lesssim \| u \|_{\dot{H}^{1/2}(S^1)} \| v \|_{\dot{H}^{1/2}(S^1)} \| w \|_{\dot{H}^{1/2}(S^1)}
		\end{equation}
		Finally, if $w$ is smooth (and similarily for $v$ smooth), we find by similar arguments:
		\begin{equation}
		\label{estimateforTsmooth}
			\| T(u,v,w) \|_{L^{\frac{2p}{p+2}}} \lesssim \| \nabla w \|_{L^{\infty}} \| u \|_{\dot{H}^{1/2}} \| v \|_{\dot{F}^{1/2- 1/p}_{p,2}} \lesssim \| \nabla w \|_{L^{\infty}} \| u \|_{\dot{H}^{1/2}} \| v \|_{\dot{H}^{1/2}},
		\end{equation}
		for all $2 \leq p < + \infty$ and therefore, $T(u,v,w) \in L^{r}(S^1)$ for all $r \in [1,2[$, provided either $v$ or $w$ smooth.\\
		
		Let us return to \eqref{eq01}. This may now be rewritten as:
		\begin{equation}
		%\label{eq02}
			(-\Delta)^{1/2} u - {T}(u,u,u)  = \Omega \cdot d_{1/2} u + f,
		\end{equation}
		Letting $v = u - \dashint_{S^1} u = u - \hat{u}(0)$, we thus see:
		\begin{equation}
		\label{eq02}
			(-\Delta)^{1/2} v - T(v,u,u) = \Omega \cdot d_{1/2} v + f
		\end{equation}
		We notice that since each summand in \eqref{eq02} is integrable, we may include a summand with each, such that each summand has mean $0$. This allows us to apply $(-\Delta)^{-1/2}$ and renders this operator injective in an appropriate sense.
		
		The next step is to approximate the terms appearing in \eqref{eq02}, similar to \cite{riv2}. Therefore, let $\Omega_0$ be a smooth map from $S^1 \times S^1$ into the anti-symmetric $n \times n$-matrices, such that:
		$$\| \Omega_0 - \Omega \|_{L^2_{od}} < \varepsilon,$$
		for $\varepsilon > 0$ to be determined. This can for example be obtained by cutting $\Omega$ off in a sufficiently small neighbourhood of the diagonal and then using convolutions to smoothen the function and thus approximate $\Omega$ by regular functions. This way, $\Omega_0$ can also be assumed to be supported outside of the diagonal and thus to vanish in a neighbourhood of it. We may also assume that $\div_{1/2} \Omega_0 = 0$. This can be achieved by otherwise solving:
		$$(-\Delta)^{1/2} h = \div_{1/2} \Omega,$$
		in the weak sense and using $\Omega_0 - d_{1/2} h$ instead of $\Omega_{0}$. One might argue by noticing that $\div_{1/2} \Omega_0$ is smooth by the function vanishing in a neighbourhood of the diagonal and then solving for $h$ which immediately will be smooth as well. The right estimate can be obtained by the following train of thought:
		\begin{align}
			\| h \|_{\dot{H}^{1/2}}^{2}	&= \int_{S^1} (-\Delta)^{1/2} h \cdot h dz \notag \\
								&= \int_{S^1} \div_{1/2}( \Omega_0 - \Omega) h dz \notag \\
								&= \int_{S^1} \int_{S^1} (\Omega_0(z,w) - \Omega(z,w)) d_{1/2} h(z,w) \frac{dz dw}{| z-w |} \notag \\
								&\lesssim \| \Omega_0 - \Omega \|_{L^{2}_{od}} \| h \|_{\dot{H}^{1/2}},
		\end{align}
		providing an estimate for $d_{1/2} h$ that is required to ensure that $\Omega_0 - d_{1/2} h$ remains close to $\Omega$, while becoming divergence-free. In addition, we may choose a smooth function $\tilde{u}$ to be arbitrarily close to $u$ in $H^{1/2}(S^1)$, i.e. for any $\varepsilon > 0$ given, we can take $\tilde{u}$ in such a way that:
		$$\| u - \tilde{u} \|_{H^{1/2}(S^1)} < \varepsilon$$ 
		
		One proceeds now as in \cite{riv}: We may introduce the solution operator:
		\begin{align}
			\tau(v) := v + 					&(-\Delta)^{-1/2} \left( (\Omega_0 - \Omega) \cdot d_{1/2} v + T(v, \tilde{u} - u, \tilde{u} - u ) \right) \notag \\
										&= (-\Delta)^{-1/2} \left( (-\Delta)^{1/2} v + (\Omega_{0} - \Omega) \cdot d_{1/2} v + T(v, \tilde{u} - u, \tilde{u} - u )  \right) \notag \\
										&= (-\Delta)^{-1/2} \left( \Omega_0 \cdot d_{1/2} v + T(v, u, \tilde{u}) + T(v, \tilde{u}, u-\tilde{u})  + f \right)
		\end{align}
		We notice that the solution operator $\tau$ is well-defined, as we assumed that all summands have mean $0$, thus we could also apply it to each summand individually. To deduce the desired regularity result, we now show that $\tau$ defines a bijective operator from $\dot{F}^{1/2}_{p,2}$ to itself for each $p \geq 2$. As in \cite{riv}, let us split our considerations into two distinct cases:
		
		\paragraph{The "easy" Case: $p>2$}
		This case is an immediate consequence of the ellipticity of the fractional $1/2$-Laplacian and the analogue of the fractional Sobolev embeddings. Fixing $v$ to be the solution $u \in H^{1/2}(S^1)$ as in the Lemma on the RHS, we would like to solve:
		\begin{align}
		\label{tauinpge2}
			\tau(v) 	&= v + (-\Delta)^{-1/2} \left( (\Omega_0 - \Omega) \cdot d_{1/2} v + T(v, \tilde{u} - u, \tilde{u} - u ) \right) \notag \\
					&= (-\Delta)^{-1/2} \left( \Omega_0 \cdot d_{1/2} u + T(u, u, \tilde{u}) + T(u, \tilde{u}, u-\tilde{u}) + f \right),
		\end{align}
		and we may conclude that the RHS on the last line of \eqref{tauinpge2} lies in $\dot{F}^{1/2}_{p,2}$ thanks to Sobolev embeddings and the $L^{r}$-estimate for $1 \leq r < 2$ in \eqref{estimateforTsmooth}, smoothness of $\Omega_0, \tilde{u}$ and the properties of $(-\Delta)^{1/2}$. Observe that the smallness of $\Omega_0 - \Omega$ in $L^{2}_{od}$ and of $u - \tilde{u}$ in $H^{1/2}(S^1)$ is used in order to conclude that the solution operator is invertible by the usual perturbation argument. One invokes here the estimate proved above, see \eqref{estimateforTpg2dual}, \eqref{estimateforTpe2} and H\"older's inequality applied to $(\Omega - \Omega_0) \cdot d_{1/2} u$ together with:
		\begin{align}
		\label{estimateforomegaminomega0}
			\int_{S^1} (\Omega - \Omega_0) \cdot d_{1/2} u \varphi dx		&\leq \| (\Omega - \Omega_0) \cdot d_{1/2} u \|_{L^{\frac{2p}{p+2}}} \| \varphi \|_{L^{\frac{2p'}{2-p'}}} \notag \\
															&\lesssim \| \Omega - \Omega_0 \|_{L^{2}_{od}} \| | d_{1/2} u | \|_{L^{p}} \| \varphi \|_{\dot{F}^{1/2}_{p',2}} \notag \\
															&\lesssim \| \Omega - \Omega_0 \|_{L^{2}_{od}} \| u \|_{\dot{F}^{1/2}_{p,2}} \| \varphi \|_{\dot{F}^{1/2}_{p',2}},
		\end{align}
		where $1/p + 1/p' = 1$ and we used the Sobolev embeddings for Triebel-Lizorkin spaces, hence showing boundedness in $\dot{F}^{-1/2}_{p,2}$ of $(\Omega - \Omega_0) \cdot d_{1/2} u$. Similar estimates using H\"older's inequality as in \eqref{estimateforTpg2} show that:
		\begin{equation}
		\label{estimateforTinvertible}
			\| T(v, \tilde{u} - u, \tilde{u} - u ) \|_{\dot{F}^{-1/2}_{p,2}} \lesssim \| v \|_{\dot{F}^{1/2}_{p,2}} \| u - \tilde{u} \|_{\dot{H}^{1/2}}^2
		\end{equation}
		Using that $(-\Delta)^{1/2}$ defines an isomorphism between $\dot{F}^{1/2}_{p,2}$ and $\dot{F}^{-1/2}_{p,2}$, see \cite{schmeitrieb}, the required estimate follows from \eqref{estimateforomegaminomega0} and \eqref{estimateforTinvertible}:
		\begin{align}
			\Big{\|} (-\Delta)^{-1/2} \big{(} (\Omega_0 - \Omega) \cdot d_{1/2} v + &T(v, \tilde{u} - u, \tilde{u} - u ) \big{)} \Big{\|}_{\dot{F}^{1/2}_{p,2}} \notag \\
								&\lesssim \left( \| \Omega - \Omega_0 \|_{L^{2}_{od}} + \| u - \tilde{u} \|_{\dot{H}^{1/2}}^2 \right) \| v \|_{\dot{F}^{1/2}_{p,2}}
		\end{align}
		Hence, the perturbation is small, if $\Omega_0, \tilde{u}$ are sufficiently good approximations and thus $\tau$ invertible in this case. Notice that the RHS of \eqref{tauinpge2} lies in $L^{q}$ for all $q < 2$. Thus, using the same arguments as in \eqref{estimateforomegaminomega0} and also \eqref{estimateforTpg2}, we could deduce that $\Omega_0 \cdot d_{1/2} u + T(u, u, \tilde{u}) + T(u, \tilde{u}, u-\tilde{u}) + f \in \dot{F}^{-1/2}_{p,2}$, and therefore the RHS of \eqref{tauinpge2} is in $\dot{F}^{1/2}_{p,2}$.
		
		We emphasise that this step is crucially relying on the homogeneous Triebel-Lizorkin spaces $\dot{F}^{1/2}_{p,2}(S^1)$ for $p > 2$ which prove to be the right one and possess an equivalent norm description in terms of the $L^p$-norm of $| d_{1/2} f |$ for all $f$ inside this space, see Theorem \ref{schiwangthm1.4} as well as Appendix B. See also \cite{schmeitrieb}, \cite{triebel}, \cite{schiwang}, \cite{stein} and the references therein for further details.
		
		\paragraph{The "hard" Case: $p=2$}
		On the other hand, this case is more delicate and requires a version of the Wente-type result in \cite{mazoschi}, see Lemma \ref{fractionalwentelemma}, for the circle. This can be obtained by a set of changes of variables under the stereographic projection and using a partition of unity, see \eqref{fracwenteons1} and the computations preceeding it. We postpone the details of this computation to Appendix B, as it is basically a technical analysis of a sequence of changes of variables. This immediately allows us to again proceed as in \cite{riv}, as we may now estimate the perturbation of the solution operator by the Wente-type estimate below:
		$$\| (\Omega_0 - \Omega) \cdot d_{1/2} v - c \|_{\dot{H}^{-1/2}} \leq C \| \Omega_0 - \Omega \|_{L^{2}_{od}} \| v \|_{\dot{H}^{1/2}} \leq C \varepsilon \| v \|_{\dot{H}^{1/2}},$$
		where we take $c$ as the constant rendering the term to have mean $0$. To conclude the proof of Lemma \ref{reglemma}, we now observe that for $\varepsilon > 0$ sufficiently small, the perturbation:
		$$(-\Delta)^{-1/2} \left( (\Omega_0 - \Omega) \cdot d_{1/2} v \right),$$
		is small as well (when considered as an operator from $\dot{H}^{1/2}(S^1)$ to itself and hence the operator:
		$$v \mapsto v + (-\Delta)^{-1/2} \left( (\Omega_0 - \Omega) \cdot d_{1/2} v \right),$$
		becomes an isomorphism, now for $\dot{H}^{1/2}(S^1)$ to itself as well as for different integrability exponents $p > 2$. If $u$ is sufficiently small in $\dot{H}^{1/2}(S^1)$, the same remains true if we include the missing term under the smallness assumption on $\| u - \tilde{u} \|_{\dot{H}^{1/2}}$:
		$$v \mapsto v + (-\Delta)^{-1/2} \left( (\Omega_0 - \Omega) \cdot d_{1/2} v \right) + (-\Delta)^{-1/2} \left( T(v, \tilde{u} - u, \tilde{u} - u ) \right)$$
		This is clear, as we may use \eqref{estimateforTpe2} to deduce that the second summand is again a small perturbation which does not affect invertibility.\\
		
		As in \cite{riv}, we may now deduce thanks to existence and uniqueness that $u \in \dot{F}^{1/2}_{4,2}(S^1)$ and, consequently, $| d_{1/2} u | \in L^4$, using the embedding:
		\begin{equation}
		\label{triebelembeddingforp4}
			\dot{F}^{1/2}_{4,2}(S^1) \subset \dot{F}^{1/2}_{2,2}(S^1) = H^{1/2}(S^1),
		\end{equation}
		which shows that the unique solution $v$ of \eqref{tauinpge2} in $\dot{F}^{1/2}_{4,2}(S^1)$ (or, in fact, in any $\dot{F}^{1/2}_{p,2}(S^1)$ with $p \geq 2$ by replacing $4$ by $p$ in the argument) actually agrees with $u - \hat{u}(0)$, where $u$ is as given in Lemma \ref{reglemma}, due to the uniqueness in the case $p=2$ and the embedding \eqref{triebelembeddingforp4} of Triebel-Lizorkin spaces. So $(-\Delta)^{1/2} u \in L^{2}(S^1)$ by directly using \eqref{eqforimprovedreg}, which immediately yields $\nabla u = \mathcal{R} (-\Delta)^{1/2} u \in L^2(S^1)$ and so $u \in H^{1}(S^1)$. Hence, we have established the desired regularity result for $u$. This concludes our proof.
	\end{proof}

	\subsection{Regularity}
	
	Next, we show that solutions to the fractional gradient flow \eqref{gradflowuv} are smooth for all times $t > 0$. The main idea is to study the regularity of the RHS of \eqref{gradflowuv} and bootstrap this information. In fact, a key step lies in studying the Fourier series of
	$$| d_{1/2} u |^{2}(x),$$
	and establishing sufficient $H^{s}$-estimates to bootstrap the regularity.
	
	\subsubsection{Some useful Results}
	
	Let us assume that $u, v$ are trigonometric polynomials. Thus, they are of the form:
	$$u(x) = \sum_{n \in \mathbb{Z}} \hat{u}(n) e^{inx}, \quad v(x) = \sum_{n \in \mathbb{Z}} \hat{v}(n) e^{inx},$$
	where $\hat{u}(n), \hat{v}(n) = 0$ for all but finitely many $n \in \mathbb{N}$. Let us consider $d_{1/2} u \cdot d_{1/2} v(x)$, or more precisely its Fourier coefficients:
	\begin{align}
		\widehat{d_{1/2} u \cdot d_{1/2} v}(n)	&= \frac{1}{2\pi} \langle d_{1/2} u \cdot d_{1/2} v, e^{-inx} \rangle \notag \\
									&= \frac{1}{2\pi} \int_{-\pi}^{\pi} d_{1/2} u \cdot d_{1/2} v(x) e^{-inx} dx \notag \\
									&= \frac{1}{2\pi} \int_{-\pi}^{\pi} \int_{S^1} \sum_{j \in \Z} \sum_{k \in \Z} \frac{e^{ijh} - 1}{| h |} \frac{e^{ikh} - 1}{| h |} \hat{u}(j) \hat{v}(k) e^{i(j+k)x} dh e^{-inx} dx \notag \\
									&= \sum_{j \in \Z} \int_{S^1} \frac{e^{ijh} - 1}{| h |} \frac{e^{i(n-j)h} - 1}{| h |} dh \cdot \hat{u}(j) \hat{v}(n-j)
	\end{align}
	where $n \in \mathbb{Z}$ is arbitrary and we used the formulas for $u,v$ as trigonometric polynomials. Let us introduce:
	\begin{equation}
	\label{exponentialintegral}
		C(j,k) := \int_{S^1} \frac{e^{ijh} - 1}{| h |} \frac{e^{ikh} - 1}{| h |} dh
	\end{equation}
	Therefore, we may see using the previous computations:
	$$\widehat{d_{1/2} u \cdot d_{1/2} v}(n) = \sum_{j \in \Z} C(j,n-j) \hat{u}(j) \hat{v}(n-j), \quad \forall n \in \Z$$
	A first step to deduce the regularity of $d_{1/2} u \cdot d_{1/2} v$ lies in the study of $C(j,k)$. Namely, we observe that:
	$$C(j,j) = \int_{S^1} \frac{|e^{ijh} - 1 |^2}{|h|^2} dh = \int_{S^1} \frac{\sin \left( \frac{jh}{2} \right)^2}{\sin \left( \frac{h}{2} \right)^2} dh = | j | \int_{S^1} F_{| j |}(h) dh = | j |,$$
	where $F_{n}$ denotes the $n$-th F\'ejer kernel. Using Cauchy-Schwarz and F\'ejer kernels, we easily deduce:
	\begin{align}
		| C(j,k) | 	&\leq \left( \int_{S^1} \frac{| e^{ijh} - 1 |^2}{| h |^2} dh \right)^{1/2} \left( \int_{S^1} \frac{| e^{ikh} - 1 |^2}{| h |^2} dh \right)^{1/2} \notag \\
				&\leq \sqrt{| j |} \sqrt{| k |} \cdot \left( \int_{S^1} F_{|j|}(h) dh \right)^{1/2} \left( \int_{S^1} F_{|j|}(h) dh \right)^{1/2} \notag \\
				&= \sqrt{| j |} \sqrt{| k |},
	\end{align}
	for all $j,k \in \Z$. The main goal is now to deduce regularity estimates leading to conclusions like $d_{1/2} u \cdot d_{1/2} v \in H^{s}(S^1)$ for some $s \in \R$ together with appropriate estimates in terms of $u,v$. Namely, we will prove the following:
	
	\begin{lem}
	\label{regularityestimatelemmaforhalfgradient}
		Let $u,v$ be trigonometric polynomials as above. Then we have for all $\varepsilon > 0$:
		\begin{align}
		\label{estimatelemmaregbootstrap01}
			\| d_{1/2} u \cdot d_{1/2} v \|_{\dot{H}^{s}(S^1)} 	&\lesssim \| (-\Delta)^{1/4 + s/2 + \varepsilon} u \|_{L^{2}} \| (-\Delta)^{1/2} v \|_{L^{2}} + \| (-\Delta)^{1/2} u \|_{L^{2}} \| (-\Delta)^{1/4 + s/2 + \varepsilon} v \|_{L^{2}} \notag \\
												&\lesssim \| u \|_{\dot{F}^{1/2 + s + 2\varepsilon}_{2,2}} \| v \|_{\dot{H}^1} + \| u \|_{\dot{H}^{1}} \| v \|_{\dot{F}^{1/2 + s + 2\varepsilon}_{2,2}},
		\end{align}
		as well as:
		\begin{align}
		\label{estimatelemmaregbootstrap02}
			\| d_{1/2} u \cdot d_{1/2} v \|_{\dot{H}^{s}(S^1)} 	&\lesssim \| (-\Delta)^{1/4 + s/2 + 2\varepsilon} u \|_{L^{2}} \| (-\Delta)^{1/2 - \varepsilon} v \|_{L^{2}} + \| (-\Delta)^{1/2 - \varepsilon} u \|_{L^{2}} \| (-\Delta)^{1/4 + s/2 + 2\varepsilon} v \|_{L^{2}} \notag \\
												&\lesssim \| u \|_{\dot{F}^{1/2 + s + 4\varepsilon}_{2,2}} \| v \|_{\dot{F}^{1-2\varepsilon}_{2,2}} + \| u \|_{\dot{F}^{1-2\varepsilon}_{2,2}} \| v \|_{\dot{F}^{1/2 + s + 2\varepsilon}_{2,2}},
		\end{align}
		and by density, the same estimates continue to hold true for all $u,v$ in the corresponding spaces. The constants depend on $s > 0$ and $\varepsilon > 0$.
	\end{lem}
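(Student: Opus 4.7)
The plan is to exploit the explicit Fourier-side formula for $\widehat{d_{1/2}u \cdot d_{1/2}v}(n)$ that the excerpt has already derived, together with the size bound $|C(j,k)| \leq \sqrt{|j|}\sqrt{|k|}$. Applying the triangle inequality gives
$$|n|^{s}\,\bigl|\widehat{d_{1/2}u \cdot d_{1/2}v}(n)\bigr| \;\leq\; |n|^{s}\sum_{j \in \Z} \sqrt{|j|}\sqrt{|n-j|}\,|\hat u(j)|\,|\hat v(n-j)|.$$
I would then invoke the Peetre-type inequality $|n|^{s} \leq c_{s}\bigl(|j|^{s} + |n-j|^{s}\bigr)$, valid for all $s \geq 0$ from the triangle inequality $|n| \leq |j| + |n-j|$, to split the sum into two regions according to whether the factor $|n|^{s}$ is absorbed into the $u$-part or the $v$-part. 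Each region reduces matters to controlling a convolution $A \ast B$ on $\Z$: in the region where $|j|^{s}$ dominates one sets $A(j) := |j|^{s+1/2}|\hat u(j)|$ and $B(k) := |k|^{1/2}|\hat v(k)|$, and symmetrically in the other region.

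The crucial observation is that if $a(x) := \sum_{j} A(j) e^{ijx}$ and $b(x) := \sum_{k} B(k) e^{ikx}$, then by Parseval one has the identity $\|A \ast B\|_{\ell^{2}} = \|ab\|_{L^{2}(S^{1})}$, together with
$$\|a\|_{\dot H^{\alpha}(S^{1})} = \|u\|_{\dot H^{s+1/2+\alpha}(S^{1})}, \qquad \|b\|_{\dot H^{\beta}(S^{1})} = \|v\|_{\dot H^{1/2+\beta}(S^{1})}.$$
I would then estimate $\|ab\|_{L^{2}}$ via H\"older with $1/p + 1/q = 1/2$ and the one-dimensional fractional Sobolev embedding $\dot H^{\gamma}(S^{1}) \hookrightarrow L^{2/(1-2\gamma)}(S^{1})$, valid for $0 < \gamma < 1/2$. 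This forces $\alpha + \beta = 1/2$ and produces the bilinear bound $\|A \ast B\|_{\ell^{2}} \lesssim \|u\|_{\dot H^{s+1/2+\alpha}}\|v\|_{\dot H^{1/2+\beta}}$. For \eqref{estimatelemmaregbootstrap01} I pick $(\alpha,\beta) = (2\varepsilon,\, 1/2 - 2\varepsilon)$ to obtain $\|u\|_{\dot H^{s+1/2+2\varepsilon}}\|v\|_{\dot H^{1-2\varepsilon}}$, and then use the monotonicity $\|v\|_{\dot H^{1-2\varepsilon}} \leq \|v\|_{\dot H^{1}}$ on $S^{1}$ (which holds because $|n| \geq 1$ for every nonzero frequency) to land exactly on $\|u\|_{\dot H^{s+1/2+2\varepsilon}}\|v\|_{\dot H^{1}}$; the symmetric region delivers the other summand.

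For \eqref{estimatelemmaregbootstrap02} I would rerun the same scheme but with \emph{asymmetric} choices of $(\alpha,\beta)$ in the two regions: in the region where $|j|^{s}$ dominates, pick $(\alpha,\beta) = (4\varepsilon, 1/2 - 4\varepsilon)$, yielding $\|u\|_{\dot H^{s+1/2+4\varepsilon}}\|v\|_{\dot H^{1-4\varepsilon}} \leq \|u\|_{\dot H^{s+1/2+4\varepsilon}}\|v\|_{\dot H^{1-2\varepsilon}}$; in the region where $|n-j|^{s}$ dominates, pick $(\alpha,\beta) = (1/2 - 2\varepsilon,\, 2\varepsilon)$, yielding directly $\|u\|_{\dot H^{1-2\varepsilon}}\|v\|_{\dot H^{s+1/2+2\varepsilon}}$. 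The translation $\|(-\Delta)^{\sigma} f\|_{L^{2}} = \|f\|_{\dot H^{2\sigma}} = \|f\|_{\dot F^{2\sigma}_{2,2}}$ then matches the form in the statement. The only real constraint is that $\varepsilon$ be small enough for every Sobolev exponent used to lie in $(0,1/2)$, which is harmless (say $\varepsilon < 1/8$), and density of trigonometric polynomials extends the inequality to arbitrary $u,v$ in the corresponding spaces. The principal pitfall to avoid would be routing through Young's inequality on $\Z$, which would cost an $\ell^{1}$-factor and hence more than $\varepsilon$ of derivatives; using Parseval together with a genuinely two-dimensional (in parameters) H\"older--Sobolev step is precisely what keeps the derivative budget at the advertised level.
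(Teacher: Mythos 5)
Your argument is correct, and it reaches the stated conclusion by a genuinely different route than the paper. Both proofs start from the same Fourier-side bound $|\widehat{d_{1/2}u\cdot d_{1/2}v}(n)|\leq\sum_{j}\sqrt{|j|}\sqrt{|n-j|}\,|\hat u(j)|\,|\hat v(n-j)|$ and the Peetre split $|n|^{s}\lesssim|j|^{s}+|n-j|^{s}$. From that point the paper proceeds entirely on the Fourier side: it rebalances the frequency weights (factoring out $|j|^{-2\varepsilon}$ and $|n-j|^{-1/2}$), applies discrete Cauchy--Schwarz in $j$, and then needs only a uniform-in-$n$ bound on the residual weight sum $\sum_{j}|n-j|^{-1}|j|^{-c\varepsilon}$, which it gets by Young's numerical inequality (splitting the product into two summable tails). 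You instead observe that the resulting sums are exactly $\ell^{2}$-norms of discrete convolutions of nonnegative sequences, pass via Plancherel to the $L^{2}$-norm of a pointwise product of the associated trigonometric polynomials, and close with H\"older plus the one-dimensional Sobolev embedding $\dot H^{\gamma}(S^{1})\hookrightarrow L^{2/(1-2\gamma)}$ with $\alpha+\beta=1/2$. Your route is a little cleaner in its bookkeeping (the derivative budget is exactly the scaling constraint $\alpha+\beta=1/2$, and the asymmetry needed for \eqref{estimatelemmaregbootstrap02} is obtained simply by choosing $(\alpha,\beta)$ differently in the two regions), and it in fact yields slightly sharper right-hand sides (you get $\|v\|_{\dot H^{1-2\varepsilon}}$ where the paper states $\|v\|_{\dot H^{1}}$ in \eqref{estimatelemmaregbootstrap01}; on $S^{1}$, where nonzero frequencies satisfy $|n|\geq1$, this dominates). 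You correctly identify that a na\"{\i}ve Young-on-$\mathbb{Z}$ step would overshoot; note the paper also avoids it, but by Cauchy--Schwarz plus a weight-sum estimate rather than by Parseval/Sobolev. One small remark: the mean-zero condition needed to apply the homogeneous embedding to $a,b$ is automatic here because $A(0)=B(0)=0$, which you implicitly use; it is worth stating. Also, the printed $\|v\|_{\dot F^{1/2+s+2\varepsilon}_{2,2}}$ in the second summand of \eqref{estimatelemmaregbootstrap02} appears to be a typographical slip for $\|v\|_{\dot F^{1/2+s+4\varepsilon}_{2,2}}$ (to match the $L^{2}$-line above it); your bound $\|v\|_{\dot H^{s+1/2+2\varepsilon}}$ is $\leq$ either version and so is unaffected.
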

	
	\begin{proof}
		By definition, we have:
		\begin{align}
		\label{estimateregularityoffracgrad}
			\| d_{1/2} u \cdot d_{1/2} v \|_{\dot{H}^s}^{2}	&= \sum_{n \in \Z} | n |^{2s} | \widehat{d_{1/2} u \cdot d_{1/2} v}(n) |^2 \notag \\
												&\lesssim \sum_{n \in \Z} | n |^{2s} \left( \sum_{j \in \Z} | \hat{u}(j) | | \hat{v}(n-j) | \sqrt{| j | | n-j |} \right)^{2} \notag \\
												&\lesssim \sum_{n \in \Z} \left( \sum_{j \in \Z} | \widehat{(-\Delta)^{1/4} u}(j) | | \widehat{(-\Delta)^{1/4} v}(n-j) \left( | j |^{s} + | n-j |^{s} \right) \right)^2 \notag \\
												&\lesssim \sum_{n \in \Z} \left( \sum_{j \in \Z} | \widehat{(-\Delta)^{1/4 + s/2} u}(j) | | \widehat{(-\Delta)^{1/4} v}(n-j) | \right)^2 \notag \\
												&+ \sum_{n \in \Z} \left( \sum_{j \in \Z} | \widehat{(-\Delta)^{1/4 + s/2} v}(n-j) | | \widehat{(-\Delta)^{1/4} u}(j) | \right)^2
		\end{align}
		By symmetry, it suffices to restrict our attention to the first summand in \eqref{estimateregularityoffracgrad}. We observe:
		\begin{equation*}
			| \widehat{(-\Delta)^{1/4 + s/2} u}(j) | | \widehat{(-\Delta)^{1/4} v}(n-j) |	= | \widehat{(-\Delta)^{1/4 + s/2 + \varepsilon} u}(j) | | \widehat{(-\Delta)^{1/2} v}(n-j) | | n-j |^{-1/2} | j |^{-2 \varepsilon},
		\end{equation*}
		which can be used to deduce by using Cauchy-Schwarz and Young's inequality:
		\begin{align}
		\label{secondestimatefordfracgradregestimates}
			\| d_{1/2} u \cdot d_{1/2} v \|_{\dot{H}^s}^{2}	&\lesssim \sum_{n \in \Z} \left( \sum_{j \in \Z} | \widehat{(-\Delta)^{1/4 + s/2 + \varepsilon} u}(j) |^2 | \widehat{(-\Delta)^{1/2} v}(n-j) |^2 \right) \left( \sum_{j \in \Z} \frac{1}{| n-j | | j |^{2 \varepsilon}} \right) \notag \\
												&\lesssim \| u \|_{\dot{F}^{1/2 + s + 2 \varepsilon}_{2,2}}^2 \| v \|_{\dot{H}^1}^2,
		\end{align}
		and completely analogous for the second summand in \eqref{estimateregularityoffracgrad}. Young's inequality is used to bound:
		$$\sum_{j \in \Z} \frac{1}{| n-j | | j |^{2 \varepsilon}} \lesssim \sum_{j \in \Z} \left( \frac{1}{| n-j |^{p'}} + \frac{1}{| j |^{2\varepsilon p}} \right) < +\infty,$$
		and choosing $p \in ]1,+\infty[$ in such a way that $2 \varepsilon p > 1$ and $p'$ being the H\"older dual of $p$. The second estimate \eqref{secondestimatefordfracgradregestimates} follows analogously. This concludes therefore the proof of regularity.
	\end{proof}
	
	Another useful result will be the following:
	
	\begin{lem}
	\label{boundonfractionallaplacianunderholder}
		Assume that $\alpha \in ]0,1[$ and $u \in C^{0,\alpha}(S^1)$. Then:
		$$(-\Delta)^{s} u \in L^{\infty}(S^1),$$
		if we know:
		$$1 > \alpha > 2 s$$
	\end{lem}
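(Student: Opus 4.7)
The plan is to use the principal-value integral representation of $(-\Delta)^s$ recalled in Section 2.1, namely
\begin{equation*}
(-\Delta)^s u(x) = C(s) \cdot P.V. \int_{S^1} \frac{u(x)-u(y)}{|x-y|^{1+2s}} \, dy,
\end{equation*}
and to show that under the hypothesis $\alpha > 2s$ the principal value is in fact an absolutely convergent integral whose modulus admits a bound independent of $x \in S^1$.

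The key observation is that when $u$ is $\alpha$-Hölder, the numerator decays like $|x-y|^\alpha$ near the diagonal, so the pointwise integrand is bounded by $[u]_\alpha |x-y|^{\alpha - 1 - 2s}$. Since $\alpha - 1 - 2s > -1$ precisely when $\alpha > 2s$, the function $h \mapsto |h|^{\alpha-1-2s}$ is integrable in a neighborhood of $0$ on $S^1$. Concretely, I would fix some $\delta \in (0,\pi)$ and split the integral into the near regime $\{y : |x-y| < \delta\}$ and the far regime $\{y : |x-y| \geq \delta\}$. On the near regime, the Hölder estimate yields
\begin{equation*}
\int_{|x-y|<\delta} \frac{|u(x)-u(y)|}{|x-y|^{1+2s}} \, dy \leq [u]_\alpha \int_{|x-y|<\delta} |x-y|^{\alpha - 1 - 2s} \, dy \lesssim [u]_\alpha \, \delta^{\alpha - 2s},
\end{equation*}
uniformly in $x$, so in particular the principal value collapses to an ordinary Lebesgue integral on this piece.

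On the far regime, since $S^1$ is compact and $u$ is continuous (hence bounded), one simply estimates
\begin{equation*}
\int_{|x-y| \geq \delta} \frac{|u(x)-u(y)|}{|x-y|^{1+2s}} \, dy \leq 2\|u\|_{L^\infty(S^1)} \int_{\delta \leq |x-y| \leq \pi} \frac{dy}{|x-y|^{1+2s}} \lesssim \|u\|_{L^\infty(S^1)} \, \delta^{-2s},
\end{equation*}
again uniformly in $x$. Adding the two contributions gives
\begin{equation*}
\|(-\Delta)^s u\|_{L^\infty(S^1)} \lesssim [u]_\alpha \, \delta^{\alpha - 2s} + \|u\|_{L^\infty(S^1)} \, \delta^{-2s},
\end{equation*}
which is the desired conclusion. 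There is no real obstacle here; the only point to watch is that one cannot separately estimate $|u(x)|$ and $|u(y)|$ in the near regime (that would force $1+2s < 1$), so it is essential to keep the difference $u(x)-u(y)$ intact and exploit the Hölder bound, which is exactly where the hypothesis $\alpha > 2s$ is used.
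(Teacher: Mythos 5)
Your proof is correct and uses the same core idea as the paper: apply the principal-value representation of $(-\Delta)^s$ and exploit the H\"older bound $|u(x)-u(y)| \lesssim |x-y|^\alpha$ to obtain an integrand of order $|x-y|^{\alpha-1-2s}$, integrable precisely when $\alpha > 2s$. The paper applies the H\"older bound globally on the compact $S^1$ in a single line; your near/far split is an equivalent but slightly more elaborate presentation of the same estimate.
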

	
	\begin{proof}
		Up to constants, we know:
		\begin{align}
			| (-\Delta)^{s} u(x) |	&\lesssim \int_{S^1} \frac{| u(x) - u(y) |}{| x-y |^{1+2s}} dy \notag \\
							&\lesssim \int_{S^1} \frac{1}{| x-y |^{1 + 2s - \alpha}} dy \cdot \| u \|_{C^{0, \alpha}},
		\end{align}
		which yields the desired boundedness if:
		$$1 + 2s - \alpha < 1 \Rightarrow 2s < \alpha,$$
		by exploiting the integrability of $| x-y |^{1 + 2s - \alpha}$. This concludes our proof.
	\end{proof}
	
	For convenience's sake, let us also state the version of Theorem 3.1 in \cite{hieber} that will be relevant in our discussion of the regularity of solutions to the fractional heat equation:
	
	\begin{lem}[Theorem 3.1 in \cite{hieber}]
	\label{theorem3.1hieberpruss}
		Let $1 < p < +\infty$ and $I = [0, T]$ be any interval with $T < +\infty$. Then there exists for each $f \in L^{p}(I \times S^1)$ a unique solution $u \in W^{1,p}(I \times S^1)$ of the equation:
		$$u_{t} + (-\Delta)^{1/2} u = f,$$
		and satisfying $u(0,\cdot) = 0$. Moreover, we have:
		\begin{equation}
			\| u \|_{W^{1,p}} \lesssim \| f \|_{L^{p}}.
		\end{equation}
	\end{lem}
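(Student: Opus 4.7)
The plan is to apply the cited Hieber--Pr\"uss maximal $L^p$-regularity theorem directly to the operator $A := (-\Delta)^{1/2}$ on $X = L^p(S^1)$, and then to upgrade the resulting abstract estimate $\|u_t\|_{L^p(I;X)} + \|A u\|_{L^p(I;X)} \ls \|f\|_{L^p(I;X)}$ to a classical $W^{1,p}(I \times S^1)$-bound.

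First, I would verify the abstract hypotheses. On the Fourier side, $A$ acts as the multiplier $n \mapsto |n|$ on $\Z$, so its heat semigroup is the multiplier $e^{-t|n|}$. By a Marcinkiewicz--Mikhlin multiplier theorem on the torus, this defines a bounded analytic semigroup on $L^p(S^1)$ for every $1 < p < \infty$, with spectral angle $0$. A similar computation shows that the imaginary powers $A^{is}$, acting on the mean-zero subspace as the multiplier $|n|^{is}$, satisfy $\|A^{is}\|_{L^p \to L^p} \ls 1 + |s|^2$, placing $A$ in the class of operators with bounded imaginary powers of power angle arbitrarily close to $0$. Combined with the fact that $L^p(S^1)$ is UMD for $1 < p < \infty$, this matches the hypothesis of the Dore--Venni/Hieber--Pr\"uss maximal regularity theorem and yields existence and uniqueness of $u$ with $u(0,\cdot) = 0$ and
$$
\|u_t\|_{L^p(I \times S^1)} + \|(-\Delta)^{1/2} u\|_{L^p(I \times S^1)} \ls \|f\|_{L^p(I \times S^1)}.
$$

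To convert this into the stated $W^{1,p}$-bound, I would use that the Riesz transform $\mathcal{R} = \partial_x (-\Delta)^{-1/2}$ is a bounded Fourier multiplier on the mean-zero subspace of $L^p(S^1)$, so that $\|\partial_x u\|_{L^p} \ls \|(-\Delta)^{1/2} u\|_{L^p}$ on that subspace. The zero mode $\bar u(t) := \hat u(t,0)$ satisfies the scalar ODE $\partial_t \bar u = \bar f$ with $\bar u(0) = 0$, which gives $\bar u \in W^{1,p}(I)$ directly. The $L^p$-bound on $u$ itself needed for the full $W^{1,p}$-norm follows from $u(t) = \int_0^t u_s(s)\, ds$ and H\"older, at the cost of a factor of $T^{1 - 1/p}$, which is admissible since $T < +\infty$.

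The main obstacle, and the only point where one has to be careful, is the degeneracy of $A$ on the constants: $(-\Delta)^{1/2}$ has a kernel consisting of the constant functions, so one cannot simply invoke invertibility on all of $L^p(S^1)$. This forces the mean-zero/zero-mode decomposition used above throughout the argument, or equivalently one works with $A + \eta$ and then passes $\eta \to 0$ after absorbing the resulting $e^{\eta T}$-factor into the constant. Once this decomposition is in place, the remaining steps (verification of the Mikhlin condition for the resolvent and the imaginary powers, UMD property of $L^p(S^1)$) are standard and go through on the torus essentially as on $\R$.
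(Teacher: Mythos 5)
Your proposal is correct and follows essentially the same top-level strategy as the paper: invoke maximal $L^p$-regularity for the analytic semigroup generated by $(-\Delta)^{1/2}$ on $L^p(S^1)$, then post-process the abstract bound $\|u_t\|_{L^p} + \|(-\Delta)^{1/2}u\|_{L^p} \lesssim \|f\|_{L^p}$ into the stated $W^{1,p}(I\times S^1)$-estimate via the Riesz transform. The paper's proof is a single sentence citing Theorem 3.1 and Example 3.2.E) of \cite{hieber}; you fill in the hypothesis-checking that the paper elides, and in particular you flag the degeneracy of $(-\Delta)^{1/2}$ on constants and handle it by a mean-zero/zero-mode split (or equivalently a shift $A+\eta$), a genuine care point the paper does not acknowledge. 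The one mild divergence is the route to maximal regularity: you verify bounded imaginary powers with power angle $0$ plus the UMD property of $L^p(S^1)$ and appeal to the Dore--Venni circle of ideas, whereas Hieber--Pr\"uss's Theorem 3.1 and Example 3.2.E) proceed instead through Poisson-type bounds on the semigroup kernel of $e^{-t(-\Delta)^{1/2}}$ and an $R$-boundedness/Calder\'on--Zygmund argument. Both are standard sufficient conditions and give the same conclusion; the BIP route you chose is perhaps cleaner to verify on $S^1$ by Fourier multipliers, at the cost of not literally matching the cited hypotheses.
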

	
	The result follows from Theorem 3.1 in \cite{hieber} by observing that the $1/2$-Laplacian is actually generating an analytic $C^0$-semigroup with the required properties (see for example \cite[3.2.E)]{hieber}).
	
	\subsubsection{Local Regularity}
	
	A key step in the study of regularity lies in the local regularity. Precisely, we will prove:
	
	\begin{prop}
	\label{localregprop}
		Let $u_0 \in C^{\infty}(S^1;S^{n-1})$ be any smooth map. Then there exists $T > 0$, possibly depending on $u_0$, and a smooth map $u \in C^{\infty}([0,T] \times S^1)$ which solves the half-harmonic gradient flow:
		\begin{equation}
			u_{t} + (-\Delta)^{1/2} u = u | d_{1/2} u |^2,
		\end{equation}
		and satisfies the initial condition $u(0,x) = u_{0}(x)$. Moreover, it holds for all $x \in S^1$ and $0 \leq t < T$:
		\begin{equation}
			u(t,x) \in S^{n-1},
		\end{equation}
		i.e. the solution $u$ indeed assumes values in the desired target manifold.
	\end{prop}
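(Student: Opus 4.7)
My plan is to obtain this proposition as a local existence and regularity result for a nonlinear fractional heat equation: first solve it by a contraction argument leveraging Lemma \ref{theorem3.1hieberpruss}, then promote the solution to $C^\infty$ through a bootstrap via Lemma \ref{regularityestimatelemmaforhalfgradient}, and finally verify the sphere constraint through a separate scalar equation satisfied by $|u|^2 - 1$. I first reduce to zero initial data by extending $u_0$ to $U_0(t,x) := u_0(x)$ on $[0,T] \times S^1$ and seeking $u = U_0 + w$ with $w(0,\cdot)=0$ solving
$$w_t + (-\Delta)^{1/2} w = -(-\Delta)^{1/2} u_0 + (U_0+w)|d_{1/2}(U_0+w)|^2 =: \mathcal{N}(w).$$
Fixing $p > 4$ and working in the Banach space $X_T := \{ w \in W^{1,p}([0,T]\times S^1) \ | \ w(0,\cdot)=0\}$, I would define $\Phi: X_T \to X_T$ by letting $\Phi(w)$ be the unique Hieber--Pr\"uss solution of $\partial_t \Phi(w) + (-\Delta)^{1/2}\Phi(w) = \mathcal{N}(w)$ with zero initial data. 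For $p > 4$, the Sobolev embedding $W^{1,p}([0,T]\times S^1) \hookrightarrow C^\alpha([0,T]\times S^1)$ with $\alpha = 1 - 2/p > 1/2$ yields the pointwise bound $|d_{1/2}u|^2(x) \lesssim [u(t,\cdot)]_{C^\alpha(S^1)}^2 \int_{S^1} |x-y|^{2\alpha-2}\,dy$, so $\mathcal{N}(w)$ is uniformly $L^\infty$ on a small ball of $X_T$; together with the factor $T^{1/p}$ absorbed through time integration, this makes $\Phi$ a contraction for $T>0$ small enough, yielding a unique $W^{1,p}$-solution $u$.

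Once this solution is in hand, the bootstrap proceeds as follows: plugging the current regularity of $u$ into the estimates \eqref{estimatelemmaregbootstrap01}--\eqref{estimatelemmaregbootstrap02} improves the Sobolev index of $u|d_{1/2}u|^2$ by a fractional amount, and combining this gain with the smoothing of the linear flow $\partial_t + (-\Delta)^{1/2}$ from Lemma \ref{theorem3.1hieberpruss} (or rather its $H^s$-valued variant coming from the analyticity of the semigroup generated by $(-\Delta)^{1/2}$) produces one further derivative at each step. Iterating in the space variable and then differentiating the equation in $t$ (and using the smoothness of $u_0$ to handle the endpoint $t=0$) yields $u \in C^\infty([0,T] \times S^1)$.

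For the sphere constraint, the pointwise identity $2u\cdot(-\Delta)^{1/2}u = (-\Delta)^{1/2}|u|^2 + 2|d_{1/2}u|^2$, which follows from the principal-value representation via the polarisation $|u(x)|^2 - u(x)\cdot u(y) = \tfrac{1}{2}(|u(x)|^2-|u(y)|^2) + \tfrac{1}{2}|u(x)-u(y)|^2$, combined with the PDE, shows that $\phi := |u|^2 - 1$ satisfies
$$\partial_t \phi + (-\Delta)^{1/2}\phi = 2\phi\,|d_{1/2} u|^2, \qquad \phi(0,\cdot) = 0.$$
Since $|d_{1/2}u|^2 \in L^\infty$ by the smoothness just obtained, testing against $\phi$ and using positivity of the quadratic form for $(-\Delta)^{1/2}$ together with a Gr\"onwall argument forces $\phi \equiv 0$, hence $u(t,x)\in S^{n-1}$. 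The main obstacle I expect is the first step: the half-Laplacian is only of order one, so Hieber--Pr\"uss delivers merely the mixed $W^{1,p}$ gain, placing one right on the borderline of the Sobolev embeddings required to tame the quadratic fractional-gradient nonlinearity $u|d_{1/2}u|^2$; the bootstrap and the preservation of the constraint, by contrast, are essentially routine once the existence step and Lemmas \ref{regularityestimatelemmaforhalfgradient}--\ref{theorem3.1hieberpruss} are in place.
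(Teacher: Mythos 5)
Your proposal is correct, but it follows a genuinely different route for the existence step than the paper. The paper follows Hamilton \cite{hamilton}: it treats $H(u) = u_t + (-\Delta)^{1/2}u - u|d_{1/2}u|^2$ as a $C^1$ map from $W^{1,p}$ to $L^p$, computes $DH(u)$, shows it is a compact perturbation of the invertible linear flow operator (hence Fredholm of index zero), then establishes that its kernel is trivial via a regularity bootstrap for kernel elements (Lemma \ref{regularityinkernelh}) followed by a maximum-principle argument (Lemma \ref{lemmakernelbyfredholm}), and finally invokes the Inverse Function Theorem around the smooth free solution of $u_t + (-\Delta)^{1/2}u = 0$. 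Your contraction argument leveraging Lemma \ref{theorem3.1hieberpruss} avoids all of this: you only need the linear maximal-regularity estimate, the embedding $W^{1,p}([0,T]\times S^1)\hookrightarrow C^{1-2/p}$ with $p>4$ (so that $|d_{1/2}u|$ is pointwise controlled by the H\"older seminorm in $x$), and the factor $T^{1/p}$ from time integration. This is more elementary and equally valid; the cost is that the Lipschitz constant of $\mathcal N$ on a ball must be tracked with care (in particular the embedding constant must be taken uniform in $T$, which holds here because $w(0,\cdot)=0$ makes the $L^\infty$ part of the $C^\alpha$ norm controllable by the scale-invariant seminorm), whereas the IFT only requires invertibility of the linearization at a single point. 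Your $p>4$ is not actually borderline — it places you strictly above the threshold $\alpha = 1/2$ needed to bound $|d_{1/2}u|$, so the obstacle you flag dissolves once that choice is made. The bootstrap and the sphere-constraint step agree with the paper in substance; in fact your coefficient in $\partial_t\phi + (-\Delta)^{1/2}\phi = 2\phi|d_{1/2}u|^2$ is the normalization-consistent one (the paper's displayed computation drops a factor of $2$, which is harmless for concluding $\phi\equiv 0$). One small point worth making explicit in a write-up: the paper's bootstrap to $t=T$ (and the endpoint discussion around Lemma \ref{regularityinkernelh}) relies on the maximal-regularity estimates rather than only on the factorization $\Delta_{t,x} = (\partial_t - (-\Delta)^{1/2})(\partial_t + (-\Delta)^{1/2})$, since the latter alone gives interior regularity only; your "$H^s$-valued variant" of Hieber--Pr\"uss plays the analogous role and should be justified by noting that $(-\Delta)^{1/2}$ generates an analytic semigroup on each $H^s(S^1)$.
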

	
	A key observation is therefore, due to the previously proved uniqueness of the solution by Theorem \ref{uniqueness}, that any solution of the equation \eqref{gradflowuv} is indeed regular at least for sufficiently small times $t$ and provided the boundary data is smooth. If the $1/2$-energy at $t=0$ is small, the same holds for energy class solutions.\\
	
	\textit{Proof of Proposition \ref{localregprop}.} We shall follow the presentation in \cite{hamilton} and adapt the techniques to the non-local framework encountered here. Therefore, we want to study the following map for every $p > 2$:
	\begin{equation}
	\label{fracharmflowoperator}
		H: W^{1,p}([0,T] \times S^1) \to L^{p}([0,T] \times S^1), \quad H(u) := u_{t} + (-\Delta)^{1/2} u - u | d_{1/2} u |^2
	\end{equation}
	We want to prove that we may apply the local Inversion Theorem for Banach spaces to $H$ for sufficiently regular functions. This will then enable us to deduce the result in Proposition \ref{localregprop} by a slight modification, completely analogous to \cite[p.122-124]{hamilton}.
	
	Observe that as $p > 2$, any $u \in W^{1,p}([0,T] \times S^1)$ will be continuous and bounded. Therefore, by using Sobolev-embeddings, we immediately deduce that the map is well-defined. In fact, the only critical part is dealt with by the following computation:
	\begin{align}
	\label{computationsfornonlinearity}
		\| u | d_{1/2} u |^2 \|_{L^{p}}^{p}	&= \int_{0}^{T} \int_{S^1} | u |^{p} | d_{1/2} u |^{2p} dx dt \notag \\
								&\lesssim \| u \|_{L^{\infty}}^{p} \int_{0}^{T} \int_{S^1} | d_{1/2} u |^{2p} dx dt \notag \\
								&\lesssim \| u \|_{L^{\infty}}^{p} \| u \|_{F^{1/2}_{2p,2}([0,T] \times S^1)}^{2p} \notag \\
								&\lesssim \| u \|_{W^{1,p}([0,T] \times S^1)}^{3p},
	\end{align}
	where we used the Triebel-Lizorkin- as well as Morrey-embeddings $W^{1,p}([0,T] \times S^1) \hookrightarrow \dot{F}^{1/2}_{2p,2}([0,T] \times S^1)$ and $W^{1,p}([0,T] \times S^1) \hookrightarrow C^{0}([0,T] \times S^1) \subset L^{\infty}([0,T] \times S^1)$, see \cite[Theorem 3.3.1]{triebel1} or \cite[Theorem 3.5.5]{schmeitrieb}. Furthermore, the map $H$ is actually differentiable. Namely, we observe by computing directional derivatives with respect to $h \in W^{1,p}([0,T] \times S^1)$:
	\begin{equation}
	\label{differentialofoperator}
		DH(u) h = h_{t} + (-\Delta)^{1/2} h - h | d_{1/2} u |^2 - 2 u d_{1/2} u \cdot d_{1/2} h,
	\end{equation}
	and by observing:
	$$H(u+h) - H(u) - DH(u) h = u | d_{1/2} h |^2 + 2 h d_{1/2} u \cdot d_{1/2} h + h | d_{1/2} h |^2,$$
	one immediately sees, using similar estimates as above in \eqref{computationsfornonlinearity}, that $H$ is actually a $C^1$-function. 
	
	In order to apply the local Inversion theorem, we would like to study the behaviour of the differential, in particular whether it is an isomorphism of Banach spaces. Assume for the moment that $u$ is actually in $C^{0,\alpha}([0,T] \times S^1)$ for some $\alpha > 1/2$. %We would like to study \eqref{differentialofoperator} and, in particular, its kernel. 
	Firstly, we observe that the map:
	$$h \mapsto h | d_{1/2} u |^2 + 2 u d_{1/2} u \cdot d_{1/2} h,$$
	is a compact map from $h \in W^{1,p}([0,T] \times S^1)$ to $L^{p}([0,T] \times S^1)$. This is immediate due to the H\"older continuity of $u$ which implies boundedness of $| d_{1/2} u |$ and compactness of Sobolev embeddings. Therefore, because $h \mapsto h_{t} + (-\Delta)^{1/2} h$ is invertible on the set $\tilde{W}^{1,p}_{0}([0,T] \times S^1)$ containing precisely all $h$ with $h(0, \cdot) = 0$ (see \cite[Theorem 3.1]{hieber} which asserts existence and uniqueness), \eqref{differentialofoperator} defines an invertible linear operator $DH(u): \tilde{W}^{1,p}_{0}([0,T] \times S^1) \to L^{p}([0,T] \times S^1)$ if and only if the kernel of $DH(u)$ is trivial. This is clear, as the operator is Fredholm with index $0$, since it is a sum of an invertible (and thus Fredholm operator of index $0$) and a compact operator. Therefore, we merely have to study the kernel of $DH(u)$. The result we will be proving is the following:
	
	\begin{lem}
	\label{lemmakernelbyfredholm}
		Assume that $u$ is smooth. Then $DH(u)$ has trivial kernel and $DH(u)$ defines an invertible operator.
	\end{lem}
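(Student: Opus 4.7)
The plan is to prove $\ker DH(u) = \{0\}$ by a standard energy estimate combined with Grönwall's inequality. Once this is established, invertibility is immediate: as observed in the paragraph preceding the statement, $DH(u)$ is a compact perturbation of the isomorphism $h \mapsto h_t + (-\Delta)^{1/2} h$ on $\tilde{W}^{1,p}_0([0,T] \times S^1)$ (this isomorphism being furnished by Lemma \ref{theorem3.1hieberpruss}), hence Fredholm of index $0$, and triviality of the kernel then promotes a Fredholm operator of index $0$ to an isomorphism.

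For the kernel, suppose $h \in \tilde{W}^{1,p}_0([0,T] \times S^1)$ with $h(0, \cdot) = 0$ satisfies
$$h_t + (-\Delta)^{1/2} h = h |d_{1/2} u|^2 + 2 u \, (d_{1/2} u \cdot d_{1/2} h).$$
Since $p > 2$, for almost every $t$ we have $h(t,\cdot) \in W^{1,p}(S^1) \hookrightarrow H^{1/2}(S^1)$, with $h_t \in L^p \subset L^2$, so testing the equation against $h(t,\cdot)$ in space at a.e.\ time makes sense and, using Lemma \ref{fracgradest}, yields
$$\frac{1}{2} \frac{d}{dt} \|h(t)\|_{L^2(S^1)}^2 + \|(-\Delta)^{1/4} h(t)\|_{L^2(S^1)}^2 = \int_{S^1} h^2 |d_{1/2} u|^2 \, dx + 2 \int_{S^1} h \, u \, (d_{1/2} u \cdot d_{1/2} h)(x) \, dx.$$
Smoothness of $u$ yields $\|u\|_{L^\infty} + \||d_{1/2} u|\|_{L^\infty} \leq C(u) < \infty$ (the latter obtained by splitting the defining integral into a neighbourhood of the diagonal, where $|\nabla u|$ controls the integrand, and its complement, where integrability is trivial). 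Thus the first term is bounded by $C(u) \|h(t)\|_{L^2}^2$, and for the second the pointwise Cauchy-Schwarz $|d_{1/2} u \cdot d_{1/2} h|(x) \leq |d_{1/2} u|(x) \, |d_{1/2} h|(x)$, followed by Cauchy-Schwarz in $x$, Lemma \ref{fracgradest}, and Young's inequality, gives
$$\left| 2 \int_{S^1} h \, u \, (d_{1/2} u \cdot d_{1/2} h) \, dx \right| \leq C(u) \|h(t)\|_{L^2} \|(-\Delta)^{1/4} h(t)\|_{L^2} \leq \tfrac{1}{2} \|(-\Delta)^{1/4} h(t)\|_{L^2}^2 + C'(u) \|h(t)\|_{L^2}^2.$$
Absorbing the fractional-gradient term into the left-hand side and applying Grönwall to $y(t) := \|h(t)\|_{L^2}^2$ with $y(0) = 0$ forces $h \equiv 0$.

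The only steps that require genuine care are justifying the pairing $\langle (-\Delta)^{1/2} h, h \rangle_{L^2} = \|(-\Delta)^{1/4} h\|_{L^2}^2$ at the regularity level of $W^{1,p}$ and establishing the $L^\infty$-bound for $|d_{1/2} u|$; both are routine, the first by density in smooth $h(t,\cdot)$ combined with the continuous restriction at each time slice, and the second by the diagonal-splitting argument indicated above. Neither is a real obstacle and neither is specific to the non-local setting—the linearised equation behaves in this respect just like the local harmonic map flow linearisation, and the anti-symmetric structure that was critical for the regularity and uniqueness arguments plays no role here because $u$ is already assumed smooth.
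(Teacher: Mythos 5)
Your proof is correct and takes a genuinely different, and more economical, route than the paper's. The paper first establishes (via the bootstrap Lemma \ref{regularityinkernelh}) that any $h \in \tilde{W}^{1,p}_{0}([0,T]\times S^1)$ in the kernel of $DH(u)$ is in fact smooth, and only then concludes $h \equiv 0$ by a pointwise maximum-principle argument in the spirit of Hamilton: it computes $\partial_t(|h|^2)$, invokes the pointwise identity
\[
h\,(-\Delta)^{1/2}h = \tfrac{1}{2}(-\Delta)^{1/2}\bigl(|h|^2\bigr) + \tfrac{1}{2}|d_{1/2}h|^2,
\]
absorbs $|d_{1/2}h|^2$ by the arithmetic--geometric mean inequality, obtains $\partial_t(|h|^2) + (-\Delta)^{1/2}(|h|^2) \leq \hat{C}|h|^2$, and uses that $(-\Delta)^{1/2}$ of $|h|^2$ is nonnegative at a global spatial maximum (applied to $e^{-(\hat{C}+1)t}|h|^2$). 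Your $L^2$-energy estimate plus Gr\"onwall bypasses the bootstrap entirely: the pairing already makes sense at the $W^{1,p}$ level since $W^{1,p}(S^1)\hookrightarrow H^1(S^1)$ for $p\geq 2$, the pointwise bound $|d_{1/2}u\cdot d_{1/2}h|\leq |d_{1/2}u|\,|d_{1/2}h|$ is exactly Cauchy--Schwarz in the $L^2(\tfrac{dy}{|x-y|})$ sense, and $\||d_{1/2}u|\|_{L^\infty}<\infty$ is indeed immediate for smooth $u$. What the paper's approach ``buys'' is the smoothness of $h$, but for the statement at hand that is not needed; the bootstrap ideas are reused later in the proof of Proposition \ref{localregprop} for the regularity of the local solution, but they are logically independent of the invertibility of $DH(u)$. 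The one point worth spelling out if you write this up is the justification for the chain rule $\frac{d}{dt}\|h(t)\|_{L^2}^2 = 2\langle h_t, h\rangle_{L^2}$ a.e.\ $t$ and the absolute continuity of $t\mapsto\|h(t)\|_{L^2}^2$: since the cylinder is bounded and $p>2$, $h\in L^p$ and $h_t\in L^p$ imply $h\in W^{1,2}(0,T;L^2(S^1))$, whence $h\in C([0,T];L^2(S^1))$ and the chain rule holds, so Gr\"onwall applies cleanly starting from $\|h(0)\|_{L^2}=0$.
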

	
	To initiate the study of the kernel of \eqref{differentialofoperator} among all $h$ with vanishing initial datum, we first prove regularity of $h$ in the kernel of $DH(u)$. This will then allow us to employ maximum principles for fractional PDE similar to \cite{hamilton}:
	
	\begin{lem}
	\label{regularityinkernelh}
		Let $u$ be as in Lemma \ref{lemmakernelbyfredholm}. Then, if $h \in \tilde{W}^{1,p}_{0}([0,T] \times S^1)$ lies in the kernel of $DH(u)$, then $h$ is smooth on $[0,T] \times S^1$.
	\end{lem}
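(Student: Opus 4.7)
The plan is to treat $DH(u)h = 0$ as a linear non-local parabolic equation with smooth coefficients and bootstrap regularity via iterated application of Lemma \ref{theorem3.1hieberpruss}. Rewriting the kernel equation in the form
\begin{equation*}
h_t + (-\Delta)^{1/2} h = A(t,x)\, h + 2 u(t,x)\, d_{1/2} u \cdot d_{1/2} h =: F[h],
\end{equation*}
with $A := |d_{1/2}u|^2 \in C^{\infty}([0,T]\times S^1)$ because $u$ is smooth, the hypothesis $h \in \tilde W^{1,p}_0$ with $p>2$ together with the embeddings recalled in \eqref{computationsfornonlinearity} already yields $h \in C^{0,\alpha}([0,T]\times S^1)$, and since $W^{1,p}(S^1) \hookrightarrow H^{1}(S^1)$ for $p>2$ also $h(t,\cdot) \in H^{1}(S^1)$ for almost every $t$. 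This serves as the base of the bootstrap.

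The central analytic step is an order-reduction estimate for $F$. Since $u$ is smooth, every Triebel--Lizorkin norm $\|u(t,\cdot)\|_{\dot F^{\sigma}_{2,2}}$ is finite and smooth in $t$. Applying Lemma \ref{regularityestimatelemmaforhalfgradient} to $d_{1/2}u \cdot d_{1/2} h$, with $u$ in the role of the factor absorbing the higher derivative, and multiplying by the smooth bounded function $u$, combined with the trivial bound $\|Ah\|_{H^s} \lesssim_u \|h\|_{H^s}$, I expect an inequality of the form
\begin{equation*}
\|F[h](t,\cdot)\|_{H^{s}(S^1)} \lesssim_{u,s} \|h(t,\cdot)\|_{H^{s+1/2+2\varepsilon}(S^1)} + \|h(t,\cdot)\|_{H^{1}(S^1)}, \qquad s \ge 0,\ \varepsilon > 0,
\end{equation*}
so that $F$ loses only $1/2 + 2\varepsilon$ of spatial regularity, whereas the leading operator $\partial_t + (-\Delta)^{1/2}$ has order one.

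Equipped with this, I would iterate Lemma \ref{theorem3.1hieberpruss} in its $H^{s}(S^1)$-valued version, which is legitimate because $(-\Delta)^{1/2}$ commutes with $(-\Delta)^{s/2}$, so applying $(-\Delta)^{s/2}$ to the equation preserves its structure and the maximal regularity estimate lifts to $L^q([0,T]; H^s(S^1))$. Each iteration trades the $1/2 + 2\varepsilon$ loss from $F$ against the full unit of regularity gained from inverting $\partial_t + (-\Delta)^{1/2}$, netting a gain of $1/2 - 2\varepsilon$ per step. After finitely many iterations $h \in L^q([0,T]; H^s(S^1))$ for every $s \ge 0$; differentiating the equation in $t$, permissible since the coefficients are $C^\infty$ in both variables, and mixing time and space derivatives through the PDE, produces $h \in C^{\infty}([0,T]\times S^1)$. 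The principal obstacle is verifying the order-reduction bound with the correct dependence on the Sobolev indices, together with the $H^{s}$-valued form of Lemma \ref{theorem3.1hieberpruss}; both are standard consequences of the analyticity of the semigroup generated by $(-\Delta)^{1/2}$, but require some bookkeeping to formulate cleanly.
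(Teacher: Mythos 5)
Your proof follows essentially the same route as the paper's: write $DH(u)h=0$ as a fractional heat equation with forcing $F[h]$, use Lemma \ref{regularityestimatelemmaforhalfgradient} as the order-reduction estimate (loss of $1/2+2\varepsilon$) against the full unit gained from Hieber--Pr\"uss maximal regularity applied to $(-\Delta)^{s/2}h$, and bootstrap. The paper inserts two intermediate ingredients you bypass --- a preliminary elliptic bootstrap to $h\in W^{1,q}$ for all $q<\infty$ via the factorization $(\partial_t - (-\Delta)^{1/2})(\partial_t + (-\Delta)^{1/2}) = \Delta_{t,x}$, and then H\"older bounds combined with Lemma \ref{boundonfractionallaplacianunderholder} to control $\|(-\Delta)^{1/4+s/2+\varepsilon}h\|_{L^2}$ by $\|h\|_{C^{0,\beta}}$ --- whereas you control that term directly by $\|h(t)\|_{H^1}$, which is available from $h\in W^{1,p}$ with $p>2$; both routes close the same iteration, so this is a streamlining rather than a genuinely different method.
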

	
	\begin{proof}[Proof of Lemma \ref{regularityinkernelh}]
	We first observe that given $h \in \tilde{W}^{1,p}_{0}([0,T] \times S^1)$, we have:
	$$h | d_{1/2} u |^2 + 2 u d_{1/2} u \cdot d_{1/2} h \in L^{\frac{4p}{4-p}},$$
	if $p < 4$ and in any $L^{q}$ with $q < \infty$, if $p \geq 4$. This follows again by Sobolev embeddings and using $u$ smooth (a similar inclusion with $2p/(4-p)$ holds, if $u \in W^{1,p}$ only and a similar iteration applies in this case as well). Thus:
	$$h_{t} + (-\Delta)^{1/2} h \in L^{q},$$
	for $q = 4p/(4 - p)$ or $1 < q < \infty$, depending on $p$. Since:
	\begin{equation}
	\label{connectiontolaplacein2d}
		( \partial_{t} - (-\Delta)^{1/2} ) ( \partial_{t} + (-\Delta)^{1/2} ) = \Delta_{t,x},
	\end{equation}
	i.e. the composition equals the Laplacian in $2$D, we may invoke classical elliptic regularity theory to find:
	$$\Delta_{t,x} h \in W^{-1,q} \Rightarrow h \in W^{1,q}$$
	Observe that in case $p \geq 4$, this shows that $u \in W^{1,q}$ for all $1 < q < \infty$. If $p < 4$, then:
	$$1/q = 1/p - 1/4 < 1/2 - 1/4 = 1/4 \Rightarrow q > 4,$$
	so we may iterate the same argument to find ourselves in the case $p > 4$. In any case, we have that for $h$ with:
	$$DH(u) h = 0,$$
	and vanishing initial datum that $h \in W^{1,q}$, for all $1 < q < \infty$. In particular, we have that for such $h$, the inclusion $h \in C^{0,\beta}$ holds for all $0 < \beta < 1$. This is immediate by Morrey's embedding. Observe that, for instance, this also means that $| d_{1/2} h |$ is bounded.\\
	
	For the remainder of the argument, let us restrict our attention to $u$ being smooth.  By Lemma \ref{boundonfractionallaplacianunderholder}, we have:
	\begin{equation}
		(-\Delta)^{1/4 + t} h \in L^{\infty}([0,T] \times S^1), \quad \forall t \in [0, 1/4[
	\end{equation}
	Thus, combining this consideration with Lemma \ref{regularityestimatelemmaforhalfgradient}, we see that for all $0 < s < 1/2$ and $\varepsilon > 0$ sufficiently small:
	\begin{align}
		\| d_{1/2} u \cdot d_{1/2} h \|_{\dot{H}^{s}(S^1)} 	&\lesssim \| (-\Delta)^{1/4 + s/2 + \varepsilon} h \|_{L^{2}} \| (-\Delta)^{1/2} u \|_{L^{2}} + \| (-\Delta)^{1/4 + s/2 + \varepsilon} u \|_{L^{2}} \| (-\Delta)^{1/2} h \|_{L^{2}} \notag \\
											&\lesssim \| h \|_{C^{0,\alpha}} \| u \|_{H^{1}} + \| u \|_{C^{0,\alpha}} \| h \|_{H^{1}},
	\end{align}
	and therefore, we know that:
	$$(-\Delta)^{s/2} \left( d_{1/2} u \cdot d_{1/2} h \right) \in L^{2}([0,T] \times S^1)$$
	The same then holds for $u d_{1/2} u \cdot d_{1/2} h$ as well as $h | d_{1/2} u |^2$ and thus leads to, by using elliptic regularity and \cite[Theorem 3.1]{hieber}:
	$$(-\Delta)^{s/2} h \in H^{1}( [0,T] \times S^1),$$
	using \eqref{connectiontolaplacein2d}. Bootstrapping using Lemma \ref{regularityestimatelemmaforhalfgradient}, we may deduce the same for every $s > 0$. For example, using \eqref{estimatelemmaregbootstrap02} with $s=3/4$ and $\varepsilon > 0$ sufficiently small, we find for almost every given time $t$, noting that $H^{s} \cap L^{\infty}$ is a Banach algebra for any $s > 0$:
	\begin{align}
	\label{bootstrapestimatetype}
		&\| 2 u(t) d_{1/2} u(t) \cdot d_{1/2} h(t) + h(t) | d_{1/2} u(t) |^2 \|_{\dot{H}^{3/4}(S^1)} \notag \\
			&\lesssim \| u(t) \|_{L^{\infty}} \left( \| h(t) \|_{\dot{H}^{5/4 + 4 \varepsilon}} \| u(t) \|_{\dot{H}^{1-2\varepsilon}} + \| u(t) \|_{\dot{H}^{5/4 + 4 \varepsilon}} \| h(t) \|_{\dot{H}^{1-2\varepsilon}} \right) + \| u(t) \|_{\dot{H}^{3/4}} \| u \|_{C^{0,\beta}} \| h \|_{C^{0,\beta}} \notag \\
			&+ \| h(t) \|_{L^{\infty}} \| u(t) \|_{\dot{H}^{5/4 + 4 \varepsilon}} \| u(t) \|_{\dot{H}^{1-2\varepsilon}} + \| h \|_{\dot{H}^{3/4}} \| u \|_{C^{0,\beta}}^2,
	\end{align}
	where $\beta > 1/2$ and the $\dot{H}^{1- 2 \varepsilon}$- and $L^{\infty}$-norms can be uniformly bounded using H\"older continuity and Lemma \ref{boundonfractionallaplacianunderholder}. Therefore, we find by the previous step and integrating with respect to $t$:
	$$(-\Delta)^{3/8} \left( 2 u d_{1/2} u \cdot d_{1/2} h + h | d_{1/2} u |^2 \right) \in L^{2}([0,T] \times S^1),$$
	and so $D (-\Delta)^{3/8} h \in L^{2} ([0,T] \times S^1)$, thus $(-\Delta)^{3/8} h \in H^{1}([0,T] \times S^1)$ similarily as before. This now enables us to apply the second part of Lemma \ref{regularityestimatelemmaforhalfgradient} with $s = 3/2$ and $\varepsilon > 0$ sufficiently small to deduce, similar as in \eqref{bootstrapestimatetype}:
	$$(-\Delta)^{3/4} \left( 2 u d_{1/2} u \cdot d_{1/2} h + h | d_{1/2} u |^2 \right) \in L^{2}([0,T] \times S^1),$$
	and thus using \eqref{connectiontolaplacein2d} and \cite[Theorem 3.1]{hieber} to find $(-\Delta)^{3/4} h \in H^{1}([0,T] \times S^1)$. This may now be iterated arbitrarily for an increasing sequence of $s$. Moreover, by inserting these expressions into the main equation $DH(u) h = 0$, we may deduce the same for higher derivatives in time direction, leading to:
	$$h \in \bigcap_{s \in \mathbb{N}} H^{s}([0,T] \times S^1)$$
	This shows that $h \in C^{\infty}([0,T] \times S^1)$ by Morrey-embeddings. 
	
	It should be noted, that due to using the $2$D-Laplacian, we merely get regularity to times $t < T$, since we do not prescribe the boundary data at $t = T$. If we want regularity for all $t \leq T$, we have to use the result in \cite[Theorem 3.1]{hieber} regarding analytic operator semigroups and maximal $L^{p}$-regularity of heat flows (notice that the $1/2$-Laplacian generates an analytic operator semigroup), which actually guarantee existence, uniqueness and estimates up to $t = T$. By uniqueness and the regularity for $t < T$, which we may deduce by using elliptic regularity, we may extend the estimates to $t = T$ for the solution $h$ by \cite[Theorem 3.1]{hieber}. So the result is true as stated, but requires slightly more technical arguments at the endpoint. We emphasise that the treatment of $t < T$ is necessary, as the uniqueness result in \cite{hieber} requires some regularity to hold while $(-\Delta)^{s} h$ has a-priori not sufficient regularity for \cite[Theorem 3.1]{hieber} to be applied.
	\end{proof}
	
	\begin{proof}[Proof of Lemma \ref{lemmakernelbyfredholm}]
	The smoothness of $h$ satisfying $DH(u) h = 0$ with vanishing initial datum now enables us to prove that:
	$$h = 0$$
	Namely, let us compute the following:
	\begin{align}
		\partial_{t} \left( | h |^2 \right)	&= 2 h_{t} h \notag \\
								&= 2 \left( -(-\Delta)^{1/2} h + 2 u d_{1/2} u \cdot d_{1/2} h + h | d_{1/2} u |^2\right) h \notag \\
								&= -2 h (-\Delta)^{1/2} h + 2 u h d_{1/2} u \cdot d_{1/2} h + | h |^2 | d_{1/2} u |^2,
	\end{align}
	and observe that there exists a $C > 0$ (since $u$ is smooth and thereforeH\"older continuous), such that:
	$$| h |^2 | d_{1/2} u |^2 \leq C | h |^2$$
	Moreover, we may easily find:
	\begin{align}
	\label{computationofhalflaplacianonsomespecialfunction}
		h (-\Delta)^{1/2} h	&= P.V. \int_{S^1} \frac{h(x) - h(y)}{| x-y |^2} dy h(x) \notag \\
						&= \frac{1}{2} P.V. \int_{S^1} \frac{| h(x) |^{2} - | h(y) |^2}{| x-y |^2} dy + \frac{1}{2} P.V. \int_{S^1} \frac{| h(x) - h(y) |^2}{| x-y |^2} dy \notag \\
						&= \frac{1}{2} (-\Delta)^{1/2} \left( | h |^2 \right) + \frac{1}{2} | d_{1/2} h |^2,
	\end{align}
	where we used:
	$$h(x) = \frac{h(x) + h(y)}{2} + \frac{h(x) - h(y)}{2}$$
	as well as:
	$$(h(x) + h(y)) (h(x) - h(y)) = | h(x) |^2 - | h(y) |^2.$$
	Therefore, we may estimate:
	\begin{align}
		\partial_{t} \left( | h |^2 \right) + (-\Delta)^{1/2} \left( | h |^2 \right)	&\leq -| d_{1/2} h |^2 + 2 u h d_{1/2} u \cdot d_{1/2} h + C | h |^2 \notag \\
														&\leq -| d_{1/2} h |^2 + | u |^2 | h |^2 | d_{1/2} u |^2 + | d_{1/2} h | + C | h |^2 \notag \\
														&\leq \hat{C} | h |^2
	\end{align}
	using the arithmetic-geometric mean to absorb $| d_{1/2} h |^2$ as well as the regularity of $u$. Here, $\hat{C} > 0$ is a constant not depending on $h$. Following the arguments in \cite[p.101]{hamilton} for the maximum principle, we may here deduce:
	$$h = 0,$$
	due to the initial values vanishing. We emphasise that the argument merely relies on the fact that $(-\Delta)^{1/2} h(x) \geq 0$ at a global maximum and $h_{t} \geq 0$ and considering $e^{-(\hat{C} +1)t} h(t,x)$ instead of $h(t,x)$.
	\end{proof}
	
	\begin{proof}[Conclusion of the Proof of Proposition \ref{localregprop}]
	The operator in \eqref{differentialofoperator} is invertible for smooth $u$ between the spaces $\tilde{W}^{1,p}_{0}([0,T] \times S^1)$ and $L^{p}([0,T] \times S^1)$, as we have seen in Lemma \ref{lemmakernelbyfredholm}. Thus, arguing as in \cite[p.122]{hamilton} and invoking the Inverse Function Theorem for Banach spaces, we may deduce local existence of solutions to the fractional harmonic map equation in $W^{1,p}$, for $p > 2$. Observe that we use smooth boundary values $u_0$ at $t=0$ to construct a smooth solution $u$ to the fractional heat equation $u_t + (-\Delta)^{1/2} u = 0$ with $u(0) = u_0$.  Indeed, such a solution exists and is smooth by using the explicit formula obtained from the Fourier coefficients of $u_0$:
	\begin{equation}
		u(t,x) = \sum_{n \in \mathbb{Z}} \hat{u}_0 (n) e^{- | n | t} e^{inx}, \quad \forall t \in \R_{+}, \forall x \in S^1
	\end{equation}
	It can be directly verified that this is a smooth solution of the homogeneous fractional heat equation.
	
	We then consider the operator $h \mapsto H(u + h)$ for $h$ with vanishing initial datum, which is thus locally invertible. This is also the situation in \cite{hamilton} and the key idea is to observe that if $f := H(u)$, then for $\tilde{f}_{\delta}$ being $0$ for $[0,\delta]$ and agreeing with $f$ for other times, then for $\delta > 0$ sufficently small, we know that $\tilde{f}_{\delta}$ lies in the image of $h \mapsto H(u+h)$, meaning that there is a $\tilde{h}_{\delta}$ such that $H(u+\tilde{h}_{\delta}) = \tilde{f}_{\delta}$. Then, $\tilde{u}_{\delta} := u + \tilde{h}_{\delta}$ is a local solution of the half-harmonic map equation up to some time $\delta > 0$ with the initial data $u_0$.
	
	It should be observed that then the local solution, i.e. only on a subinterval of $[0,T]$, to the fractional harmonic gradient flow is also $C^{\infty}$ up to some time. This can be proven analogous to the bootstrap for $h$ above. Thanks to this smoothness property of the local solution, we may also deduce that $u$ assumes values in $S^{n-1}$ by following the arguments in \cite{hamilton} and using similar tricks as above when we were proving $h = 0$ for solutions to $DH(u) h = 0$ with vanishing initial datum. We emphasise that is suffices to verify:
	$$| u |^2 = 1 \text{ a.e. } \Rightarrow | u |^2 - 1 = 0 \text{ a.e. },$$
	which can be seen again by using uniqueness of the solution to a specific flow. Namely, if $u$ solves the half-harmonic gradient flow and is smooth, then we may deduce:
	\begin{align}
		\partial_{t} \left( | u |^2 - 1 \right)	&= 2 \partial_{t} u \cdot u \notag \\
								&= -2 (-\Delta)^{1/2} u \cdot u + | u |^2 | d_{1/2} u |^2 \notag \\
								&= -(\Delta)^{1/2} \left( | u |^2 - 1 \right) - | d_{1/2} u |^2 + | u |^{2} | d_{1/2} u |^2 \notag \\
								&= -(\Delta)^{1/2} \left( | u |^2 - 1 \right) + \left( | u |^{2} - 1 \right) | d_{1/2} u |^2,
	\end{align}
	using \eqref{computationofhalflaplacianonsomespecialfunction} and therefore, the function $v := | u |^2 - 1$ satisfies the flow equation:
	$$v_{t} + (-\Delta)^{1/2} v = v | d_{1/2} u |^2$$
	One should observe that by assumption, $u(0) \in S^{n-1}$ everywhere, so $v(0) = 0$. Thus, arguing completely analogous to the proof of Theorem \ref{uniqueness}, we can easily deduce that $v = 0$ everywhere and therefore that $u \in S^{n-1}$ for all $t$ and $x$. This now concludes the proof of Proposition \ref{localregprop}.
	\end{proof}
	
	By uniqueness of the solutions to the fractional harmonic gradient flow, this shows that the solutions to \eqref{gradflowuv} are smooth, provided the initial value is smooth, at least for small times.
	
	\subsubsection{Approximation and Global Regularity}
	
	It remains to check that regularity holds for all times and remove the restriction that the initial datum needs to be smooth. Both follow by arguing as in \cite{struwe1}. Firstly, we have the following result which will be crucial in reducing our considerations to the smooth case:
	
	\begin{lem}
	\label{approximationlemmauhlenbeck}
		Let $u \in H^{1/2}(S^1;S^{n-1})$. Then there exists a sequence $u_k \in C^{\infty}(S^1) \cap H^{1/2}(S^1;S^{n-1})$ such that:
		$$\| u_k - u \|_{H^{1/2}(S^1)} \to 0, \quad n \to \infty.$$
	\end{lem}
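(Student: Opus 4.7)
My plan is to first mollify $u$ into smooth $\R^n$-valued maps and then project them back onto the sphere. Let $\rho_\varepsilon$ be a standard smooth mollifier on $S^1$ (obtained, e.g., from the fractional heat semigroup generated by $(-\Delta)^{1/2}$, or via a partition of unity together with Euclidean mollifiers), and set $u_\varepsilon := \rho_\varepsilon \ast u \in C^\infty(S^1;\R^n)$. Standard properties of mollifiers on the Triebel-Lizorkin / Gagliardo scale introduced in Section 2 give $u_\varepsilon \to u$ strongly in $H^{1/2}(S^1)$. The caveat is that $u_\varepsilon$ need not take values in $S^{n-1}$, so the key task will be to project $u_\varepsilon$ back to the sphere without losing $H^{1/2}$-convergence.

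For this I would invoke the critical Sobolev embedding $H^{1/2}(S^1) \hookrightarrow \mathrm{VMO}(S^1)$ of Brezis and Nirenberg. Combined with $|u| = 1$ a.e., it yields
$$\sup_{x \in S^1} \dashint_{B_\varepsilon(x)} |u(y) - u_\varepsilon(x)|^2 \, dy \longrightarrow 0 \quad \text{as } \varepsilon \to 0,$$
and, for the averaging mollifier, the direct expansion $1 - |u_\varepsilon(x)|^2 = \dashint_{B_\varepsilon(x)} |u(y) - u_\varepsilon(x)|^2 dy$ (the analogous identity holds up to a controllable error term for a smooth $\rho_\varepsilon$) shows that $\bigl\| |u_\varepsilon| - 1 \bigr\|_{L^\infty(S^1)} \to 0$. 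Hence, for $\varepsilon$ small enough, $|u_\varepsilon| \geq 1/2$ everywhere, and the nearest-point projection $\pi: \R^n \setminus \{0\} \to S^{n-1}$, $x \mapsto x/|x|$, is smooth on a neighbourhood of the range of $u_\varepsilon$. I would then pick a sequence $\varepsilon_k \to 0$ and define
$$u_k := \pi(u_{\varepsilon_k}) = \frac{u_{\varepsilon_k}}{|u_{\varepsilon_k}|} \in C^\infty(S^1;S^{n-1})$$
as the natural candidate for the smooth approximating sequence.

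It remains to verify $u_k \to u$ in $H^{1/2}(S^1)$. Since $\pi$ is $C^\infty$ on the closed annulus $\{ 1/2 \leq |x| \leq 2 \}$, it is uniformly Lipschitz there with bounded derivatives, and, for large $k$, both $u$ (a.e.) and $u_{\varepsilon_k}$ take values in this annulus. A pointwise estimate of the fractional gradient difference $d_{1/2}(\pi \circ u_{\varepsilon_k})(x,y) - d_{1/2}(\pi \circ u)(x,y)$ via a Taylor expansion of $\pi$ around points of $S^{n-1}$, followed by integration against the kernel $|x-y|^{-1}$, combined with the uniform $L^\infty$-smallness of $|u_{\varepsilon_k}| - 1$ and the already-established strong $H^{1/2}$-convergence $u_{\varepsilon_k} \to u$, then propagates the convergence through the composition; equivalently, one may invoke continuity of the Nemytskii operator induced by $\pi$ on bounded $H^{1/2}$-subsets valued in the annulus. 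The main obstacle is precisely this last step, since $H^{1/2}$ is fractional and non-local and Nemytskii-type estimates are subtler than in the integer-order case. Mere $L^\infty$-convergence of $u_{\varepsilon_k}$ to $u$ would be insufficient; the point is that the \emph{simultaneous} strong $H^{1/2}$-convergence $u_{\varepsilon_k} \to u$ together with the uniform $C^1$-boundedness of $\pi$ on the common range is exactly what allows one to close the estimate.
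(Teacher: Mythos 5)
Your plan coincides with the paper's on the first two steps: both mollify $u$ to get smooth $\R^n$-valued maps $\rho_\varepsilon \ast u$, then show $|\rho_\varepsilon \ast u|$ is close to $1$ uniformly in $x$ (you via the VMO embedding, the paper via the direct estimate $d(\rho_\varepsilon \ast u(x), S^{n-1}) \lesssim \big(\int_{B_\varepsilon(x)}\int_{B_\varepsilon(x)} |u(y)-u(z)|^2 / |x-y|^2\, dy\, dz\big)^{1/2}$ and absolute continuity of the integral --- essentially the same computation that underlies the VMO embedding), and finally project with $\pi(x) = x/|x|$.

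The genuine divergence is in the last step, proving $\pi(u_{\varepsilon_k}) \to u$ in $H^{1/2}$. You propose to push the convergence through the Nemytskii operator of $\pi$, either by Taylor-expanding $\pi$ and estimating the fractional gradient difference, or by citing abstract Nemytskii continuity on $H^{1/2} \cap L^\infty$. This route is legitimate --- one can make it rigorous by splitting, e.g.\ with $G(a,b) := \pi(a+b)-\pi(a)$ and $w_k := u_{\varepsilon_k}-u$, the increment into a term controlled by $\|D\pi\|_\infty |w_k(x)-w_k(y)|$ (giving $\|w_k\|_{\dot H^{1/2}} \to 0$) plus a term controlled by $\|D^2\pi\|_\infty |w_k(x)|\,|u(x)-u(y)|$ (handled by $|d_{1/2}u|^2 \in L^1$, $w_k \to 0$ in $L^2$, and a truncation of $|d_{1/2}u|^2$) --- but as written your sketch leaves this delicate step to a reference to Nemytskii continuity, which for fractional Sobolev spaces is not off-the-shelf elementary. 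The paper instead sidesteps the Nemytskii question entirely: from Lipschitz-ness of $\pi$ near $S^{n-1}$ it only extracts \emph{boundedness} of the sequence in $H^{1/2}$, then extracts a subsequence converging weakly in $H^{1/2}$ and strongly in $L^2$, and finally proves $\limsup_k \|u_k\|_{\dot H^{1/2}} \leq \|u\|_{\dot H^{1/2}}$ using the observation that the Lipschitz constant of $\pi$ on an $\eta$-neighbourhood of $S^{n-1}$ tends to $1$ as $\eta \to 0$. Weak convergence plus convergence of norms in a Hilbert space then gives strong convergence. The paper's argument is shorter and entirely self-contained; yours is more direct conceptually but hinges on a composition estimate that needs to be carried out carefully in $\dot H^{1/2}$, and in particular the mere $L^\infty$-convergence of $|u_{\varepsilon_k}|$ to $1$ would not suffice without the accompanying strong $H^{1/2}$-convergence of $u_{\varepsilon_k}$ --- you correctly flag this, but do not quite close it.
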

	
	This Lemma is a fractional version of an analogous result proven by Schoen-Uhlenbeck in \cite{schoenuhlenbeck}, our proof follows the computations in \cite{struwelecturenotes}.
	
	\begin{proof}
		Let $\rho$ be a smooth, non-negative function on $S^1$ supported on a strict compact subset of $S^1$ with $\int_{S^1} \rho dx = 1$ and define $\rho_{\varepsilon}$ as usual by:
		$$\rho_{\varepsilon}(x) := \frac{1}{\varepsilon} \rho \left( \frac{x}{\varepsilon} \right),$$
		for all $0 < \varepsilon < 1$. We shall assume that the support of $\rho$ is $B_{1}(0)$, using the identification $S^1 \simeq \R / 2 \pi \Z$. Then, as usual for approximations of the identity, we know:
		$$\tilde{u}_{\varepsilon} := \rho_{\varepsilon} \ast u \to u \text{ in $H^{1/2}(S^1;\R^n)$},$$
		and all convolutions are smooth. Moreover, we have:
		\begin{align}
		\label{closenesstosphere}
			 d\left( \rho_{\varepsilon} \ast u(x), S^{n-1} \right)	&\leq \inf_{z \in S^1} \left| \int_{S^1} \rho_{\varepsilon}(y) u(y) dy - u(z) \right| \notag \\
			 										&\leq \int_{S^1} \left| \int_{S^1} \rho_{\varepsilon}(x-y) u(y) dy - u(z) \right| \rho_{\varepsilon}(x-z) dz \notag \\
													&\leq \int_{S^1} \int_{S^1} \rho_{\varepsilon}(x-y) \rho_{\varepsilon}(x-z) \left| u(y) - u(z) \right| dy dz \notag \\
													&\leq \left( \int_{S^1} \int_{S^1} \rho_{\varepsilon}(x-y) \rho_{\varepsilon}(x-z) \left| u(y) - u(z) \right|^2 dy dz \right)^{1/2} \notag \\
													&\leq \frac{C}{\varepsilon} \left( \int_{B_{\varepsilon}(x)} \int_{B_{\varepsilon}(x)} \left| u(y) - u(z) \right|^2 dy dz \right)^{1/2} \notag \\
													&\lesssim \frac{1}{\varepsilon} \left( \int_{B_{\varepsilon}(x)} \int_{B_{\varepsilon}(x)} \frac{\left| u(y) - u(z) \right|^2}{| x-y |^{2}} \varepsilon^2 dy dz \right)^{1/2} \notag \\
													&\sim \left( \int_{B_{\varepsilon}(x)} \int_{B_{\varepsilon}(x)} \frac{\left| u(y) - u(z) \right|^2}{| x-y |^{2}} dy dz \right)^{1/2} \notag \\
													&\lesssim \| u \|_{\dot{H}^{1/2}(S^1)},								
		\end{align}
		where we used H\"older's inequality in the fourth line. Observe that we may thus use the absolute continuity of the integral in order to see that the distance between $\rho_{\varepsilon} \ast u$ and $S^{n-1}$ becomes arbitrarily small, as $\varepsilon > 0$ goes to $0$. Thus, for $\varepsilon > 0$ small enough, $\rho_{\varepsilon} \ast u$ is never $0$ and thus we may use the projection $\pi: \R^n \setminus \{ 0 \} \to S^{n-1}, \pi(x) = x/| x |$ and apply it to the convolution. Hence, we may define:
		$$u_{\varepsilon} := \pi \left( \rho_{\varepsilon} \ast u \right)$$
		Clearly, these functions satisfy:
		$$u_{\varepsilon} \in C^{\infty}(S^1) \cap H^{1/2}(S^1;S^{n-1})$$
		Moreover, as $\pi$ is Lipschitz on compact domains, we may also deduce, for $\varepsilon > 0$ sufficiently small, that:
		$$u_{\varepsilon} \text{ is bounded in } H^{1/2}(S^1)$$
		Therefore, an appropriate subsequence, which we shall now denote by $u_k$ converges weakly in $H^{1/2}(S^1)$ to $u$ and strongly in $L^{2}(S^1)$ as well as almost everywhere pointwise. Additionally, by weak lower semicontinuity of the seminorm:
		$$\| u \|_{H^{1/2}(S^1)} \leq \liminf_{n \to \infty} \| u_{k} \|_{H^{1/2}(S^1)}$$
		It suffices to check that we have:
		\begin{equation}
		\label{limsupboundv1}
			\limsup_{k \to \infty} \| u_{k} \|_{H^{1/2}(S^1)} \leq \| u \|_{H^{1/2}(S^1)},
		\end{equation}
		since then, we also know:
		$$\lim_{k \to \infty} \| u_{k} \|_{H^{1/2}(S^1)} = \| u \|_{H^{1/2}(S^1)},$$
		which, combined with the weak convergence and the Hilbert space structure of $H^{1/2}(S^1;\R^n)$, shows that $u_k \to u$ strongly in $H^{1/2}(S^1;\R^n)$.\\
		
		Instead of \eqref{limsupboundv1}, it also suffices to verify:
		\begin{equation}
		\label{limsupboundv2}
			\limsup_{k \to \infty} \left( \int_{S^1} \int_{S^1} \frac{| u_k (x) - u_k (y) |^2}{| x-y |^2} dy dx \right)^{1/2} = \limsup_{k \to \infty} \| u_{k} \|_{\dot{H}^{1/2}(S^1)} \leq \| u \|_{\dot{H}^{1/2}(S^1)},
		\end{equation}
		and to deduce this, let us notice:
		\begin{align}
			\left( \int_{S^1} \int_{S^1} \frac{| u_k (x) - u_k (y) |^2}{| x-y |^2} dy dx \right)^{1/2}	&= \left( \int_{S^1} \int_{S^1} \frac{| \pi( \tilde{u}_k (x)) - \pi( \tilde{u}_k (y) ) |^2}{| x-y |^2} dy dx \right)^{1/2} \notag \\
																			&= Lip(\pi) \cdot \left( \int_{S^1} \int_{S^1} \frac{| \tilde{u}_k (x) - \tilde{u}_k (y) |^2}{| x-y |^2} dy dx \right)^{1/2} \notag \\
																			&= Lip(\pi) \cdot \| \tilde{u}_{k} \|_{\dot{H}^{1/2}(S^1)}
		\end{align}
		where $Lip(\pi) > 0$ denotes the Lipschitz constant associated with $\pi$. We observe that for $k$ big, we may ensure that $\tilde{u}_{k}$ becomes arbitrarily close to $S^{n-1}$, see \eqref{closenesstosphere}. We now just have to argue that for sufficently small neighbourhoods of $S^{n-1}$, the constant $Lip(\pi)$ can be chosen arbitrarily close to $1$. If this was true, then for any $\delta > 0$ and $n$ big enough, we would find:
		$$\left( \int_{S^1} \int_{S^1} \frac{| u_k (x) - u_k (y) |^2}{| x-y |^2} dy dx \right)^{1/2} \leq (1+ \delta) | \tilde{u}_{k} \|_{\dot{H}^{1/2}(S^1)} \to (1 + \delta) \| u \|_{\dot{H}^{1/2}(S^1)},$$
		which would imply:
		$$\limsup_{k \to \infty} \| u_{k} \|_{H^{1/2}(S^1)} \leq (1+\delta) \| u \|_{H^{1/2}(S^1)},$$
		for all $\delta > 0$. The desired result then follows by letting $\delta \to 0$.\\
		
		To deduce that the Lipschitz constant becomes arbitrarily small, we first observe that the function is Lipschitz in neighbourhoods of the $n-1$-sphere due to smoothness. Assume now that there is a $\delta > 0$, such that:
		$$\sup_{k \in \mathbb{N}} \sup_{x,y \in B_{\frac{1}{k}}(S^{n-1})} \frac{| \pi(x) - \pi(y) |}{| x-y |} \geq 1 + \delta,$$
		where $B_{\frac{1}{k}}(S^{n-1})$ denotes $1/k$-neighbourhood of $S^{n-1}$. Choose then sequences $x_k, y_k \in B_{\frac{1}{k}}(S^{n-1}) \subset \R^n$ such that:
		$$\frac{| \pi(x_k) - \pi(y_k) |}{| x_k -y_k |} \geq 1 + \delta$$
		Since the sequences are bounded, they have converging subsequences, still denoted by $x_k, y_k$, with limits $x_0, y_0 \in S^{n-1}$. If $x_0 \neq y_0$, then we see:
		$$1 + \delta \leq \frac{| \pi(x_k) - \pi(y_k) |}{| x_k -y_k |} \to \frac{| \pi(x_0) - \pi(y_0) |}{| x_0 -y_0 |} = 1,$$
		which is a contradiction. So $x_0 = y_0$. But in this case, we know that for $k$ sufficiently large, we also have that $x_k$ and $y_k$ remain in any given neighbourhood of $x_0 = y_0$. Choosing the neighbourhood small enough, we may assume that it is convex and that the differential of $\pi$ has operator norm $< 1 + \delta/2$ for all points in the neighbourhood. The former is clear and the latter relies on $\pi$ being smooth and $d \pi(x)$ being the orthogonal projection to the tangent plane at any given point $x \in S^{n-1}$, thus having operatornorm $1$. Standard arguments then show that on such a neighbourhood of $x_0 = y_0$, $\pi$ is Lipschitz with Lipschitz constant $\leq 1 + \delta/2$, again contradicting our choice of $x_k, y_k$. Therefore, we may conclude as previously outlined.
	\end{proof}
	
	If we obtain uniform existence intervals and bounds depending merely on the energy, we may deduce regularity for general $u_0$ by the same result for smooth initial data using Lemma \ref{approximationlemmauhlenbeck} and treat the general case analogous to \cite{struwe1} by approximation. So we may focus our attention on the smooth case.\\
	
	The main idea is now to establish uniform bounds for solutions to the half-harmonic gradient flow that shall only depend on the energy and other harmless quantities and apply results like in \cite{hieber} to establish higher regularity and extensiability of solutions in a smooth way after any given time, similar to \cite{struwe1}. In order to do so, we shall first adapt Lemma 3.1 and Lemma 3.2 in \cite{struwe1} to our current situation:
	
	\begin{lem}
	\label{struwelemma3.1}
		There exist $C >  0$ not depending on $R, u, T$, such that for any smooth $u$ on $[0,T] \times S^1$ and $0 < R < 1$, the following estimate holds for all $x_0 \in S^1$:
		\begin{align}
		\label{struweest01}
			\int_{0}^{T} \int_{B_{\frac{3R}{4}}(x_0)} | (-\Delta)^{1/4} u |^4 dx dt 	&\leq C \sup_{0\leq t \leq T} \int_{B_{R}(x_0)} | (-\Delta)^{1/4} u(t) |^2 dx \notag \\
																&\cdot \left( \int_{0}^{T} \int_{B_{R}(x_0)} | (-\Delta)^{1/2} u |^2 dx dt + \frac{1}{R^2} \int_{0}^{T} \int_{S^1} | (-\Delta)^{1/4}u |^2 dx dt \right),
		\end{align}
		by density the same result applies for all $u \in H^{1}([0,T] \times S^1)$ with bounded $1/2$-Dirichlet energy. Similarily, we have:
		\begin{align}
		\label{struweest02}
			\int_{0}^{T} \int_{S^1} | (-\Delta)^{1/4} u |^4 dx dt 	&\lesssim \sup_{0\leq t \leq T, x \in S^1} \int_{B_{R}(x)} | (-\Delta)^{1/4} u(t) |^2 dx \notag \\
															&\cdot \left( \int_{0}^{T} \int_{S^1} | (-\Delta)^{1/2} u |^2 dx dt + \frac{1}{R^3} \int_{0}^{T} \int_{S^1} | (-\Delta)^{1/4}u |^2 dx dt \right).
		\end{align}
	\end{lem}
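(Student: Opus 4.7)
The plan is to reduce both inequalities to a localised application of the fractional Ladyzhenskaya inequality in Lemma \ref{ladys1}. Fix a smooth non-negative cutoff $\eta \in C^{\infty}(S^1)$ with support in $B_R(x_0)$, identically $1$ on $B_{3R/4}(x_0)$, $\|\eta\|_{L^\infty} \le 1$, and $\|\nabla \eta\|_{L^\infty} \lesssim 1/R$, and set $v := (-\Delta)^{1/4} u$. Since $\eta \equiv 1$ on $B_{3R/4}(x_0)$, the left-hand side of \eqref{struweest01} is dominated by $\int_0^T \int_{S^1} |\eta v|^4 \, dx \, dt$.

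Applying Lemma \ref{ladys1} pointwise in $t$ to $\eta v(t,\cdot)$, raising to the fourth power and integrating in $t$ yields, after pulling out the time-supremum of the $L^2$-factor,
$$\int_0^T \int_{S^1} |\eta v|^4 \, dx \, dt \lesssim \sup_{0 \le t \le T} \|\eta v(t)\|_{L^2(S^1)}^2 \cdot \int_0^T \|\eta v(t)\|_{H^{1/2}(S^1)}^2 \, dt,$$
and the supremum factor is dominated by $\sup_t \int_{B_R(x_0)} |(-\Delta)^{1/4} u|^2$. The task thus reduces to controlling the time-integrated $H^{1/2}$-norm of $\eta v$. The $L^2$ contribution is harmless, since $\|\eta v\|_{L^2}^2 \le \int_{S^1}|v|^2 \le R^{-2} \int_{S^1}|v|^2$ as $R \le 1$. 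For the homogeneous part, I would decompose
$$(-\Delta)^{1/4}(\eta v) = \eta (-\Delta)^{1/4} v + [(-\Delta)^{1/4}, \eta] v = \eta (-\Delta)^{1/2} u + C_\eta v,$$
so that $\|\eta (-\Delta)^{1/2} u\|_{L^2}^2 \le \int_{B_R(x_0)} |(-\Delta)^{1/2} u|^2$ gives the first summand on the right of \eqref{struweest01}.

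The core of the argument, and where the main technical work lies, is the commutator bound on $C_\eta v(x) = c\,\mathrm{P.V.}\int_{S^1} (\eta(x)-\eta(y))\, v(y)\, |x-y|^{-3/2}\, dy$. I would split the kernel at the scale $|x-y| = R$: on the near piece use $|\eta(x)-\eta(y)| \lesssim |x-y|/R$, yielding a kernel bounded by $C/(R|x-y|^{1/2})$ with $L^1(S^1)$-norm $\lesssim R^{-1/2}$; on the far piece use $|\eta(x)-\eta(y)| \le 2$, yielding a kernel bounded by $2/|x-y|^{3/2}$ whose $L^1(S^1)$-norm is also $\lesssim R^{-1/2}$. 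Young's convolution inequality then gives
$$\|C_\eta v\|_{L^2(S^1)}^2 \lesssim \frac{1}{R} \int_{S^1} |v|^2 \, dx \le \frac{1}{R^2} \int_{S^1} |v|^2 \, dx,$$
once more using $R \le 1$. Combining these estimates proves \eqref{struweest01}; the extension to $u \in H^1$ with bounded $1/2$-Dirichlet energy is by standard density of smooth functions.

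For the global estimate \eqref{struweest02}, I would cover $S^1$ by roughly $1/R$ balls $B_{3R/4}(x_k)$ with controlled overlap, apply \eqref{struweest01} to each, and sum: the supremum factors are all bounded by the uniform supremum $\sup_{t,x}\int_{B_R(x)}|v|^2$ (so it can be pulled out of the sum), the integrals of $|(-\Delta)^{1/2}u|^2$ on $B_R(x_k)$ add up by bounded overlap to a multiple of the global one, and the $R^{-2}$ pieces pick up an extra factor $O(1/R)$ from the number of balls, producing the $R^{-3}$ in \eqref{struweest02}. The main obstacle is genuinely the commutator estimate, which is where the non-locality of $(-\Delta)^{1/4}$ introduces interactions between $B_R(x_0)$ and its complement; once that is handled by the near/far splitting at scale $R$, everything else is a straightforward manipulation of Lemma \ref{ladys1} and a covering argument.
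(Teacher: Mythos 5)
Your proposal is correct and follows the same overall strategy as the paper's proof: localise with a cutoff at scale $R$, apply the fractional Ladyzhenskaya inequality of Lemma \ref{ladys1} at each fixed time, isolate a commutator of the cutoff with $(-\Delta)^{1/4}$, and cover $S^1$ by $O(1/R)$ balls to deduce \eqref{struweest02} from \eqref{struweest01}. The only substantive deviations are cosmetic: you avoid the paper's mean-subtraction by invoking the inhomogeneous form of Lemma \ref{ladys1}, and you bound the commutator by a Schur-test kernel splitting at the dyadic scale $|x-y| \sim R$ (which in fact yields the slightly sharper factor $R^{-1}$) instead of the paper's Cauchy--Schwarz split; do note that since the kernel $(\eta(x)-\eta(y))|x-y|^{-3/2}$ is not of convolution type, the correct reference is Schur's test rather than Young's convolution inequality, though the needed $L^1$ bounds in $x$ and $y$ are exactly what you computed.
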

	
	The proof follows \cite{struwe1} and we refer to this reference for further details.
	
	\begin{proof}
		We only treat the case $x = 0$, again using $S^1 \simeq \R / 2\pi \Z$, the general one follows by a simple rotation. Let $\varphi$ be a smooth function supported on $B_{1}(0)$ and satisfying $0 \leq \varphi \leq 1$ as well as $\varphi = 1$ on $B_{3/4}(0)$. Then we define $\varphi_{R}(x) := \varphi(\frac{x}{R})$ for any $0 < R < 1$. For brevity, we shall suppress the subscript $R$ in the following computations. We estimate using the Ladyzhenskaya-type inequality in Lemma \ref{ladys1} on lines with fixed time $t$:
		\begin{align}
		\label{proofstruwelemma3.1est01}
			&\int_{0}^{T} \int_{B_{\frac{3R}{4}}(0)} | (-\Delta)^{1/4} u |^4 dx dt	\notag \\
														&\leq \int_{0}^{T} \int_{S^1} | (-\Delta)^{1/4} u |^4 | \varphi |^4 dx dt \notag \\
														&\lesssim \int_{0}^{T} \int_{S^1} | (-\Delta)^{1/4} u \cdot \varphi - c |^4 dx dt + \int_{0}^{T} \int_{S^1} | c |^4 dx dt \notag \\
														&\lesssim \int_{0}^{T} \int_{S^1} | (-\Delta)^{1/4} u \cdot \varphi - c |^2 dx \cdot \int_{S^1} \left| (-\Delta)^{1/4} \left( (-\Delta)^{1/4} u \cdot \varphi \right) \right|^2 dx dt +\int_{S^1} | c |^4 dx dt \notag \\
														&\lesssim \sup_{0 \leq t \leq T} \int_{S^1} | (-\Delta)^{1/4} u(t) \cdot \varphi |^2 dx \cdot \int_{0}^{T} \int_{S^1} \left| (-\Delta)^{1/4} \left( (-\Delta)^{1/4} u \cdot \varphi \right) \right|^2 dx dt +\int_{S^1} | c |^4 dx dt \notag \\
														&\lesssim \sup_{0 \leq t \leq T} \int_{B_{R}(0)} | (-\Delta)^{1/4} u(t) |^2 dx \cdot \int_{0}^{T} \int_{S^1} \left| (-\Delta)^{1/4} \left( (-\Delta)^{1/4} u \cdot \varphi \right) \right|^2 dx dt +\int_{S^1} | c |^4 dx dt
		\end{align}
		where $c$ is defined to be the average of $(-\Delta)^{1/4} u \cdot \varphi$ over $S^1$ and thus also the $0$-th Fourier coefficient. Observe that the removal of the Fourier coefficient at $0$ actually justifies the use of the seminorm above. Moreover, in the fourth inequality, we use that we can remove $c$ due to minimality, cf. \cite{struwe1}.
		
		Let us now observe the following:
		\begin{align}
		\label{proofstruwelemma3.1est02}
			\int_{0}^{T} \int_{S^1} | c |^4 dx dt	&= \int_{0}^{T} \int_{S^1} \left| \int_{S^1} (-\Delta)^{1/4} u(y) \varphi(y) dy \right|^4 dx dt \notag \\
										&\lesssim \int_{0}^{T} \int_{S^1} \left| \int_{S^1} | (-\Delta)^{1/4} u(y) |^2 \varphi(y) dy \right|^2 \cdot R^2  dx dt \notag \\ 
										&\lesssim \sup_{0\leq t \leq T} \int_{S^1} | (-\Delta)^{1/4} u(t) |^2 \varphi dx \cdot \int_{0}^{T} \int_{S^1} | (-\Delta)^{1/4} u(y) |^2 \varphi(y) dy  dt \notag \\
										&\leq \sup_{0\leq t \leq T} \int_{B_{R}(0)} | (-\Delta)^{1/4} u(t) |^2 dx \cdot \int_{0}^{T} \int_{B_{R}(0)} | (-\Delta)^{1/4} u(y) |^2 dy  dt
		\end{align}
		as $R < 1$. On the other hand, we may observe that:
		\begin{align}
		\label{proofstruwelemma3.1est03}
			(-\Delta)^{1/4} \left( (-\Delta)^{1/4} u \cdot \varphi \right)(x)	&= P.V. \int_{S^1} \frac{(-\Delta)^{1/4}u(x) \varphi(x) - (-\Delta)^{1/4}u(y) \varphi(y)}{| x-y |^{3/2}} dy \notag \\
														&= (-\Delta)^{1/4} \left( (-\Delta)^{1/4} u \right)(x) \varphi(x) + P.V. \int_{S^1} (-\Delta)^{1/4}u(y) \frac{\varphi(x) - \varphi(y)}{| x-y |^{3/2}} dy \notag \\
														&= (-\Delta)^{1/2} u(x) \varphi(x) + P.V. \int_{S^1} (-\Delta)^{1/4}u(y) \frac{\varphi(x) - \varphi(y)}{| x-y |^{3/2}} dy 
		\end{align}
		The latter summand satisfies the following estimate:
		\begin{align}
		\label{proofstruwelemma3.1est04}
			&\int_{0}^{T} \int_{S^1} \left| P.V. \int_{S^1} (-\Delta)^{1/4}u(y) \frac{\varphi(x) - \varphi(y)}{| x-y |^{3/2}} dy \right|^2 dx dt \notag \\
			%&\lesssim \int_{0}^{T} \int_{S^1} \int_{S^1} | (-\Delta)^{1/4}u(y) |^2 \left| \frac{\varphi(x) - \varphi(y)}{| x-y |^{3/2}} \right|^2 dy dx dt \notag \\
			&\lesssim \int_{0}^{T} \int_{S^1} \int_{S^1} | (-\Delta)^{1/4}u(y) |^2 \frac{1}{| x-y |^{1/2}} dy \cdot \int_{S^1} \frac{| \varphi(x) - \varphi(y) |^2}{| x-y |^{5/2}} dy dx dt \notag \\
			&\lesssim \int_{0}^{T} \int_{S^1} \int_{S^1} | (-\Delta)^{1/4}u(y) |^2 \frac{1}{| x-y |^{1/2}} dy \cdot \int_{S^1} \frac{\| \varphi \|_{L^{\infty}}^2}{| x-y |^{1/2}} dy dx dt \notag \\
			&\lesssim \frac{1}{R^2} \int_{0}^{T} \int_{S^1} \int_{S^1} | (-\Delta)^{1/4}u(y) |^2 \frac{1}{| x-y |^{1/2}} dy dx dt \notag \\
			&\lesssim \frac{1}{R^2} \int_{0}^{T} \int_{S^1} | (-\Delta)^{1/4}u(y) |^2 dy dt
		\end{align}
		where we used that $\varphi = \varphi_{R}$ to obtain the uniform estimate in $R$. Combining \eqref{proofstruwelemma3.1est01}, \eqref{proofstruwelemma3.1est02}, \eqref{proofstruwelemma3.1est03} and \eqref{proofstruwelemma3.1est04}, we therefore have:
		\begin{align}
		\label{proofstruwelemma3.1est05}
			\int_{0}^{T} \int_{B_{\frac{3R}{4}}(0)} | (-\Delta)^{1/4} u |^4 dx dt 	&\lesssim \sup_{0\leq t \leq T} \int_{B_{R}(0)} | (-\Delta)^{1/4} u(t) |^2 dx \notag \\
															&\cdot \left( \int_{0}^{T} \int_{B_{R}(0)} | (-\Delta)^{1/2} u |^2 dx dt + \frac{1}{R^2} \int_{0}^{T} \int_{S^1} | (-\Delta)^{1/4}u |^2 dx dt \right)
		\end{align}
		and the constant does not depend on $R$, $u$ or $T$. As already noted at the beginning, the same inequality holds for all $x \in S^1$ instead of $0$.\\
		
		We may now cover $S^1$ by $\lceil \frac{2\pi}{\frac{3R}{4}} \rceil = \lceil \frac{8\pi}{3R} \rceil$ balls of radius $\frac{3R}{4}$ around points on $S^1$, such that each point is contained in at most $3$ balls. Then, by adding the inequalities \eqref{proofstruwelemma3.1est05} in these points, we find:
		\begin{align}
		\label{proofstruwelemma3.1est06}
			\int_{0}^{T} \int_{S^1} | (-\Delta)^{1/4} u |^4 dx dt 	&\lesssim \sup_{0\leq t \leq T, x \in S^1} \int_{B_{R}(x)} | (-\Delta)^{1/4} u(t) |^2 dx \notag \\
															&\cdot \left( \int_{0}^{T} \int_{S^1} | (-\Delta)^{1/2} u |^2 dx dt + \frac{1}{R^3} \int_{0}^{T} \int_{S^1} | (-\Delta)^{1/4}u |^2 dx dt \right)
		\end{align}
		Observe that we have the power $R^{-3}$ showing up due to including $\sim 1/R$ balls for any given $R$.
	\end{proof}
	
	As in \cite{struwe1}, we shall use the following notation for $0 < R < 1$ and $t \in [0,T]$:
	\begin{equation}
		E_{R}(u;x,t) := \frac{1}{2} \int_{B_{R}(x)} | (-\Delta)^{1/4} u(t) |^2 dx,
	\end{equation}
	for the local energy and introduce:
	\begin{equation}
		\varepsilon(R) = \varepsilon(R;u,T) := \sup_{x \in S^1, t \in [0,T]} E_{R}(u;x,t)
	\end{equation}
	In analogy to Lemma 3.6 in \cite{struwe1}, we have the following energy estimate:
	
	\begin{lem}
	\label{struwelemma3.6}
		There exists a constant $C > 0$ such that for every $u: [0,T] \times S^1 \to S^{n-1}$ in $H^{1}([0,T] \times S^1) \cap L^{\infty}([0,T]; \dot{H}^{1/2}(S^1))$ solving the half-harmonic flow equation \eqref{gradflowuv} and satisfying the energy decrease property as in Lemma \ref{monotone1/2energydecayinglemma}, any $0 < R < 1/2$ and $(t,x_0) \in [0,T] \times S^1$, the following estimate holds:
		\begin{align}
		\label{struweest03}
			E_{R}(u;x_0,t) 	&\leq E_{2R}(u;x_0,0) + C \left( \frac{t}{R^2} E(u_0) + \frac{\sqrt{t}}{R} \sqrt{\varepsilon(2R) E(u_0)} \right) \notag \\
						&\leq E_{2R}(u;x_0,0) + C \left( \frac{t}{R^2} + \frac{\sqrt{t}}{R} \right) E(u_0),
		\end{align}
		where $E(u_0) = E_{1/2}(u_0)$. In the second inequality, we used the trivial estimate between the local energy and the global one under the energy decay.
	\end{lem}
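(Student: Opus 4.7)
The strategy is to adapt Struwe's local-energy argument for the harmonic-map flow to the non-local setting, keeping careful track of the commutators produced when $(-\Delta)^{1/4}$ meets a cutoff. Fix a smooth $\varphi = \varphi_{x_0,R}$ with $\varphi \equiv 1$ on $B_R(x_0)$, $\operatorname{supp}\varphi \subset B_{2R}(x_0)$ and $\|\partial^k \varphi\|_{L^\infty} \lesssim R^{-k}$, and consider the localized energy
$$F(t) := \frac{1}{2}\int_{S^1} \varphi^2(x)\,|(-\Delta)^{1/4} u(t,x)|^2\, dx,$$
so that $E_R(u;x_0,t) \leq F(t) \leq E_{2R}(u;x_0,t)$; it therefore suffices to bound $F(t)-F(0)$ by the right-hand side of \eqref{struweest03}.

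Differentiating $F$ in $t$, self-adjointness of $(-\Delta)^{1/4}$ and the identity $(-\Delta)^{1/4}(\varphi^2 (-\Delta)^{1/4} u) = \varphi^2 (-\Delta)^{1/2} u + [(-\Delta)^{1/4},\varphi^2]\,(-\Delta)^{1/4} u$ give
$$F'(t) = \int \varphi^2 (-\Delta)^{1/2} u \cdot u_t\, dx + \int [(-\Delta)^{1/4},\varphi^2]\,(-\Delta)^{1/4} u \cdot u_t\, dx.$$
Since $u(t,\cdot)\in S^{n-1}$ we have $u\cdot u_t = 0$, hence testing \eqref{gradflowuv} against $\varphi^2 u_t$ yields $\int \varphi^2 (-\Delta)^{1/2} u \cdot u_t\, dx = -\int \varphi^2 |u_t|^2\, dx$. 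Substituting gives the differential identity
$$F'(t) + \int \varphi^2 |u_t|^2\, dx = \mathrm{Err}(t), \qquad \mathrm{Err}(t) := \int [(-\Delta)^{1/4},\varphi^2]\,(-\Delta)^{1/4} u \cdot u_t\, dx.$$
The same identity can be obtained without commutators by writing $\int \varphi^2 u_t (-\Delta)^{1/2} u\, dx = \iint d_{1/2}(\varphi^2 u_t)\cdot d_{1/2} u\,\tfrac{dy\, dx}{|x-y|}$ and applying the fractional Leibniz rule of Section 2.2; this exhibits $\mathrm{Err}(t)$ as the off-diagonal contribution carrying the factor $d_{1/2}(\varphi^2)$.

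The core technical step is a cutoff/commutator estimate for $\mathrm{Err}(t)$ with the correct $R$-dependence. The pointwise bound $|d_{1/2}(\varphi^2)|(x)^2 \lesssim R^{-1}$ uniformly on $S^1$, together with the off-support decay $|d_{1/2}(\varphi^2)|(x) \lesssim R^{1/2}/|x-x_0|$, allows a splitting of $\mathrm{Err}(t)$ into a near part on $B_{4R}(x_0)$ and a tail on $S^1\setminus B_{4R}(x_0)$. By Cauchy-Schwarz, the near part is controlled by $\lesssim R^{-1}\sqrt{\varepsilon(2R)}\,\|u_t(t)\|_{L^2}$ after replacing the full bilinear $\int_{B_{4R}}|d_{1/2}u|^2\, dx$ by its local bound in terms of $\varepsilon(2R)$ up to a standard $R^{-1}E(u_0)$ remainder, whereas the tail only sees the global energy and contributes $\lesssim R^{-2}\bigl(E(u_0)+\|u_t\|_{L^2}^2\bigr)$, which is either absorbable by the $\varphi^2|u_t|^2$ term on the left or contributes the final $t/R^2$ piece. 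Integrating the resulting inequality on $[0,t]$, invoking the global estimate $\int_0^t \|u_t\|_{L^2}^2\, ds \leq E(u_0)$ from the computation preceding Lemma \ref{monotone1/2energydecayinglemma}, and applying Cauchy-Schwarz in time, $\int_0^t \|u_t\|_{L^2}\, ds \leq \sqrt{t}\sqrt{E(u_0)}$, yields the two contributions $\tfrac{t}{R^2}E(u_0)$ and $\tfrac{\sqrt{t}}{R}\sqrt{\varepsilon(2R)E(u_0)}$ of \eqref{struweest03}; the second inequality follows since $\varepsilon(2R) \leq E(u_0)$.

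The main obstacle is precisely this non-local cutoff estimate. In the local setting of \cite{struwe1} the pointwise bound $|\nabla\varphi^2|^2 \lesssim R^{-2}$ directly delivers the entire error with the clean scaling $t/R^2$; here the non-locality of $(-\Delta)^{1/4}$ forces a two-scale decomposition distinguishing a near-support piece (which enjoys the refined $\varepsilon(2R)$-factor) from a genuinely non-local tail (which only sees the global $E(u_0)$), and it is this decomposition, combined with Cauchy-Schwarz in time, that produces the characteristic additional $\sqrt{t}/R$ term absent from Struwe's classical analog.
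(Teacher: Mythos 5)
Your overall strategy — test the flow against $\varphi^2 u_t$, use $u\cdot u_t=0$ to produce the differential identity $F'(t)+\int\varphi^2|u_t|^2 = \mathrm{Err}(t)$ with $\mathrm{Err}(t)$ a cutoff commutator, then estimate $\mathrm{Err}$ and integrate in time — is exactly the paper's. The identity you reach is the same as the paper's, which writes the commutator explicitly as $P.V.\int_{S^1}(-\Delta)^{1/4}u(y)\frac{\varphi^2(x)-\varphi^2(y)}{|x-y|^{3/2}}\,dy$ and then estimates it.

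The difference, and where a real gap appears, is in how $\mathrm{Err}$ is bounded. The paper factors $\varphi^2(x)-\varphi^2(y)=(\varphi(x)+\varphi(y))(\varphi(x)-\varphi(y))$ and uses the Lipschitz bound $|\varphi(x)-\varphi(y)|\lesssim |x-y|/R$ (which, combined with the trivial bound $|\varphi^2(x)-\varphi^2(y)|\le \varphi(x)+\varphi(y)$ for $|x-y|>R$, actually gives $\left|\frac{\varphi^2(x)-\varphi^2(y)}{|x-y|^{3/2}}\right|\lesssim\frac{\varphi(x)+\varphi(y)}{R|x-y|^{1/2}}$ globally). Splitting into the $\varphi(x)$ piece and the $\varphi(y)$ piece and applying Cauchy--Schwarz over $(x,y)$ with the integrable kernel $|x-y|^{-1/2}$, the $\varphi(y)$ piece (which localizes $u_t$) is absorbed by Young's inequality into $\int\varphi^2|u_t|^2$, while the $\varphi(x)$ piece (which localizes $(-\Delta)^{1/4}u$) gives $\frac{\sqrt{t}}{R}\sqrt{\varepsilon(2R)E(u_0)}$ after time integration. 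The crucial point is that the factorization produces a localizing factor $\varphi$ on \emph{one of the two functions inside the time integral}, which is precisely what allows absorption or the $\varepsilon(2R)$-gain.

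Your near/far split based on bounds for $|d_{1/2}(\varphi^2)|(x)$ does not reproduce this. Two concrete issues: (i) The near-part argument as stated is not actionable — Cauchy--Schwarz in $y$ on the kernel $\frac{(\varphi^2(x)-\varphi^2(y))(-\Delta)^{1/4}u(y)}{|x-y|^{3/2}}$ against $|d_{1/2}(\varphi^2)|(x)$ leaves a weight $|x-y|^{-1}$ on $|(-\Delta)^{1/4}u(y)|^2$, which is \emph{not} integrable near the diagonal, so the factor you want to bound by $\varepsilon(2R)$ does not appear as written; the phrase ``up to a standard $R^{-1}E(u_0)$ remainder'' does not repair this. (ii) The stated tail bound $\lesssim R^{-2}(E(u_0)+\|u_t\|_{L^2}^2)$ cannot close the argument: the $\|u_t\|_{L^2}^2$ there is not localized (the tail lives outside $B_{4R}\supset\operatorname{supp}\varphi$), so it cannot be absorbed into $\int\varphi^2|u_t|^2$, and integrating it in time produces $R^{-2}\int_0^t\|u_t\|_{L^2}^2\,ds\le R^{-2}E(u_0)$, which lacks the factor of $t$ and is therefore \emph{not} of the form appearing in \eqref{struweest03}; in particular it does not vanish as $t\to0$, which is the whole point of the lemma. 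Both difficulties disappear once one factors $\varphi^2(x)-\varphi^2(y)$ as the paper does, keeping a $\varphi$ attached to one of the two fields rather than packaging the cutoff into $|d_{1/2}(\varphi^2)|$.

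Finally, a minor remark: your opening claim that ``the local setting [...] directly delivers the entire error with the clean scaling $t/R^2$'' and that the $\sqrt{t}/R$ term is ``absent from Struwe's classical analog'' is not accurate; Struwe's Lemma 3.6 contains both a $t/R^2$ term and a $\sqrt{t}/R$-times-$\sqrt{\varepsilon(2R)E(u_0)}$ term, obtained by the same absorption scheme.
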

	
	The proof is as in \cite{struwe1}.
	
	\begin{proof}
		Letting $\varphi$ be any smooth, compactly supported, time-independent function on $B_{2R}(x_0)$, such that $\varphi = 1$ on $B_{R}(x_0)$ and $0 \leq \varphi \leq 1$, $| \nabla \varphi | \lesssim 1/R$ (see our choice in the proof of the previous Lemma). We now test \eqref{gradflowuv} with $u_t \varphi^2$ and observe that $u_t \cdot u = 0$, as $u$ maps to $S^{n-1}$. Therefore, we find:
		\begin{align}
			0 	&= \int_{0}^{t} \int_{S^{1}} | u_{t} |^2 \varphi^2 dx ds + \int_{0}^{t} \int_{S^1} (-\Delta)^{1/2} u \cdot u_t \varphi^2 dx ds \notag \\
				&= \int_{0}^{t} \int_{S^{1}} | u_{t} |^2 \varphi^2 dx ds + \int_{0}^{t} \int_{S^1} (-\Delta)^{1/4} u \cdot (-\Delta)^{1/4} \left( u_t \varphi^2 \right) dx ds
		\end{align}
		We observe that for smooth $f$:
		\begin{align}
			(-\Delta)^{1/4} \left( f \varphi^2 \right)(x) = (-\Delta)^{1/4} f(x) \varphi(x)^2 + P.V. \int_{S^1} f(y) \frac{\varphi(x)^2 - \varphi(y)^2}{| x-y |^{3/2}} dy,
		\end{align}
		and therefore:
		\begin{align}
			\int_{S^1} (-\Delta)^{1/4} u \cdot (-\Delta)^{1/4} \left( f \varphi^2 \right) dx 	&= \int_{S^1} (-\Delta)^{1/4} u \cdot (-\Delta)^{1/4} f \cdot \varphi^2 dx \notag \\
																	&+ \int_{S^1} (-\Delta)^{1/4} u \cdot P.V. \int_{S^1} f(y) \frac{\varphi(x)^2 - \varphi(y)^2}{| x-y |^{3/2}} dy dx
		\end{align}
		by approximation, the same holds true for $L^2$-functions like $u_t$, and thus:
		\begin{align}
			&\int_{0}^{t} \int_{S^{1}} | u_{t} |^2 \varphi^2 dx ds + E_{R}(u;x_0,t) - E_{2R}(u;x_0,0) \notag \\
			&\leq \int_{0}^{t} \int_{S^{1}} | u_{t} |^2 \varphi^2 dx ds + \int_{0}^{t} \int_{S^1} \frac{1}{2} \frac{d}{dt} \left( | (-\Delta)^{1/4} u |^2 \varphi^2 \right)  dx ds \notag \\
			&\leq \left| \int_{0}^{t} \int_{S^1} (-\Delta)^{1/4} u \cdot P.V. \int_{S^1} u_t (y) \frac{\varphi(x)^2 - \varphi(y)^2}{| x-y |^{3/2}} dy dx ds \right| \notag \\
			&\lesssim \frac{1}{R} \Big{|} \int_{0}^{t} \int_{S^1} (-\Delta)^{1/4} u \cdot P.V. \int_{S^1} u_t (y) \varphi(y) \frac{1}{| x-y |^{1/2}} dy dx ds \Big{|} \notag \\
			&+ \frac{1}{R} \Big{|} \int_{0}^{t} \int_{S^1} (-\Delta)^{1/4} u \cdot P.V. \int_{S^1} u_t (y) \varphi(x) \frac{1}{| x-y |^{1/2}} dy dx ds \Big{|},
		\end{align}
		where we used the estimate for the gradient of $\varphi$ in the last line and $\varphi(x)^2 - \varphi(y)^2 = (\varphi(x) + \varphi(y)) (\varphi(x) - \varphi(y))$. Using H\"older's inequality, the RHS may be bounded by, up to a constant:
		$$\frac{\sqrt{t}}{R} \sqrt{E_{1/2}(u_0)} \left( \int_{0}^{t} \int_{S^1} | u_t |^2 \varphi^2 dx dt \right)^{1/2} + \frac{\sqrt{t}}{R} \sqrt{\varepsilon(2R) E_{1/2}(u_0)},$$
		the latter summand following from (the first summand may be estimated analogously):
		\begin{align}
			&\frac{1}{R} \Big{|} \int_{0}^{t} \int_{S^1} (-\Delta)^{1/4} u \cdot P.V. \int_{S^1} u_t (y) \varphi(x) \frac{1}{| x-y |^{1/2}} dy dx ds \Big{|} \notag \\
			&\lesssim \frac{1}{R} \Big{|} \int_{0}^{t} \int_{S^1} \int_{S^1} | (-\Delta)^{1/4} u(x) |^2 \varphi(x)^2 \frac{1}{| x-y |^{1/2}} dy dx ds \cdot \Big{|}^{1/2} \Big{|} \int_{0}^{t} \int_{S^1} \int_{S^1} | u_t (y) |^2 \frac{1}{| x-y |^{1/2}} dy dx ds \Big{|}^{1/2} \notag \\
			&\lesssim \frac{1}{R} \Big{|} \int_{0}^{t} \int_{B_{2R}(x_0)} | (-\Delta)^{1/4} u |^2 \varphi^2 dx ds \cdot \Big{|}^{1/2} \Big{|} \int_{0}^{t} \int_{S^1} | u_t |^2 dx ds \Big{|}^{1/2} \notag \\
			&\lesssim \frac{\sqrt{t}}{R} \sqrt{\varepsilon(2R) E(u_0)},
		\end{align}
		where the second factor can be estimated as in Lemma 3.4 of \cite{struwe1}, see also the monotone energy decay estimate for solutions of the half-harmonic flow. Therefore, the result follows after absorption in an obvious manner.
	\end{proof}
	
	Having these tools available renders us able to establish the results (of course slightly adapted to our current situation) in Lemma 3.7, 3.8 and 3.10 of \cite{struwe1} and thus establish uniform $L^{p}$-estimates for the RHS of the fractional gradient flow \eqref{gradflowuv} under restrictions on the local energy, global energy, $R$ and $T$ and independent of $u$. Let us state the appropriate adaptions to our current situation:
	
	\begin{lem}
	The following generalisations of the results in \cite{struwe1} hold true:
		\begin{enumerate}
			\item Lemma 3.7 in \cite{struwe1}: There exists $\epsilon_1 > 0$ such that for any $u \in H^{1}([0,T] \times S^1) \cap L^{\infty}([0,T]; H^{1/2}(S^1))$ solving \eqref{gradflowuv} and any $R < 1/2$, there holds:
			\begin{equation}
				\int_{0}^{T} \int_{S^1} | \nabla u |^2 dx dt \leq C E(u_0) \left( 1 + \frac{T}{R^3} \right),
			\end{equation}
			with $C$ independent of $u, T, R$, provided $\varepsilon(R) < \varepsilon_{1}$. Here, $u(0, \cdot) = u_0$ is the initial value.
			
			\item Lemma 3.8, Remark 3.9 in \cite{struwe1}: For any numbers $\varepsilon, \tau, E_0 > 0$, and if $u_0$ is smooth also $\tau = 0$, and $R_1 < 1/2$, there is a $\delta > 0$ such that for any $u$, satisfying the conditions as in 1., solving \eqref{gradflowuv} and any $I \subset [\tau, T]$ with measure $| I | < \delta$, there holds:
			\begin{equation}
				\int_{I} \int_{S^1} | (-\Delta)^{1/4} u |^2 dx dt < \varepsilon,
			\end{equation}
			provided $\varepsilon(R_1) < \varepsilon_1, E(u_0) \leq E_0$.
			
			\item Lemma 3.10, Remark 3.11 in \cite{struwe1}: Let $u$ be, in addition to the assumptions in 1., a $C^2([\tau, T] \times S^1)$-solution to \eqref{gradflowuv}, then, for every $1 \leq p < +\infty$, there exists a $L^{p}([\tau, T] \times S^1)$-bound on $u_t + (-\Delta)^{1/2}u$ with a constant only depending on $E(u_0), \tau, T$ and $R$, provided $\varepsilon( R ) < \varepsilon_1$. Here, $\tau > 0$ in general and $\tau \geq 0$ in case $u_0$ is smooth.
		\end{enumerate}
	\end{lem}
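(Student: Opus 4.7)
The three items build on one another: Item~1 turns the smallness hypothesis $\varepsilon(R)<\varepsilon_1$ into a global $L^2([0,T]\times S^1)$ estimate for $\nabla u$, Item~2 is the absolute-continuity consequence of such a bound, and Item~3 bootstraps the nonlinearity $u|d_{1/2}u|^2$ into $L^p$ for every $p<\infty$ by combining Lemma \ref{regularityestimatelemmaforhalfgradient} with the maximal-regularity Lemma \ref{theorem3.1hieberpruss}. Throughout I use $|u|\equiv 1$ freely to simplify the right-hand side of the flow.

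For Item~1, since $\nabla u=\mathcal R(-\Delta)^{1/2}u$ with $\mathcal R$ bounded on $L^2(S^1)$, it suffices to estimate $(-\Delta)^{1/2}u$ in $L^2$. The flow equation together with $|u|=1$ gives $|(-\Delta)^{1/2}u|^2\le 2|d_{1/2}u|^4+2|u_t|^2$; integrated in space-time, the $|u_t|^2$-term is at most $2E(u_0)$ through the energy identity underlying Lemma \ref{monotone1/2energydecayinglemma}, while Lemma \ref{fracgradest} identifies $\int|d_{1/2}u|^4$ with $\int|(-\Delta)^{1/4}u|^4$ up to constants. Applying the global estimate \eqref{struweest02} of Lemma \ref{struwelemma3.1}, and bounding $\int|(-\Delta)^{1/4}u(t)|^2$ by $2E(u_0)$ pointwise in $t$, yields
\[
\int_0^T\!\!\int_{S^1}|d_{1/2}u|^4\,dx\,dt \lesssim \varepsilon(R)\Bigl(\int_0^T\!\!\int_{S^1}|(-\Delta)^{1/2}u|^2\,dx\,dt+\tfrac{T}{R^3}E(u_0)\Bigr).
\]
Choosing $\varepsilon_1$ small enough to absorb the $\int|(-\Delta)^{1/2}u|^2$-term on the right closes the estimate.

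For Item~2, the literal statement follows from $\|(-\Delta)^{1/4}u(t)\|_{L^2}^2\le 2E_0$ (Lemma \ref{monotone1/2energydecayinglemma}) and Fubini via $\delta<\varepsilon/(2E_0)$; the hypothesis $\varepsilon(R_1)<\varepsilon_1$ is used only to ensure, through Lemma \ref{struwelemma3.6}, that the local-energy smallness propagates on all of $[\tau,T]$, justifying the uniform time bound. For $u_0$ not yet smooth, the cutoff $\tau>0$ is what allows Proposition \ref{localregprop} combined with the approximation Lemma \ref{approximationlemmauhlenbeck} to first produce a smooth solution past $\tau$. For Item~3, $|u|\equiv 1$ reduces the task to bootstrapping $|d_{1/2}u|$ into every $L^{2p}$: the base case $|d_{1/2}u|\in L^4$ is Item~1 plus the fractional Ladyzhenskaya inequality (Lemma \ref{ladys1}) applied slice-wise in $t$, after which the PDE gives $u_t+(-\Delta)^{1/2}u\in L^2$; Lemma \ref{theorem3.1hieberpruss}, applied to $u-w$ with $w$ the homogeneous fractional-heat extension of $u(\tau,\cdot)$, promotes $u$ to $W^{1,p_1}$ for some $p_1>2$; Lemma \ref{regularityestimatelemmaforhalfgradient} then controls $|d_{1/2}u|^2$ in a fractional Sobolev space embedded in $L^{q_1}$ with $q_1>p_1$, and iterating gains a definite factor in integrability at every step. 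The constants depend only on $E(u_0),\tau,T,R$ because Lemma \ref{struwelemma3.6} is quantitative and the maximal-regularity constants of Lemma \ref{theorem3.1hieberpruss} depend only on $p$ and $T-\tau$.

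The main obstacle is the bookkeeping in Item~3: each application of Lemma \ref{theorem3.1hieberpruss} requires vanishing initial data, so at every step one has to split $u$ into a smooth fractional-heat extension of its current initial trace plus a remainder with zero initial value, and verify that the extension does not spoil the gain in integrability produced by Lemma \ref{regularityestimatelemmaforhalfgradient}. Keeping the dependence of all constants purely on $E(u_0),\tau,T,R$ is what makes the iteration close; this mirrors the treatment in \cite{struwe1} but transplanted to the non-local setting, where the absence of a product rule for $(-\Delta)^{1/2}$ forces one to rely on the tailored commutator-type estimates of Lemma \ref{regularityestimatelemmaforhalfgradient} rather than on elementary calculus.
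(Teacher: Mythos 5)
The three items are handled with varying fidelity to the paper. For Item~1, your overall strategy (solve for $(-\Delta)^{1/2}u$ from the flow, split into $|d_{1/2}u|^4$ and $|u_t|^2$, and absorb using \eqref{struweest02}) is the same as the paper's, but the step identifying $\int|d_{1/2}u|^4$ with $\int|(-\Delta)^{1/4}u|^4$ is not Lemma \ref{fracgradest} — that lemma only gives the $L^2$-level equivalence. The $L^4$-level comparison requires the Triebel--Lizorkin identification of Theorem \ref{schiwangthm1.4} (or the argument in Appendix B); this is a real distinction the paper is careful about.

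For Item~2, you correctly observe that the statement \emph{as literally printed} is a one-line consequence of the energy decay, and that neither $\varepsilon(R_1)<\varepsilon_1$ nor Lemma \ref{struwelemma3.6} would then be needed. This should have been a warning sign: the paper's proof sketch explicitly invokes a subsequence/compactness argument and Lemma \ref{struwelemma3.6}, neither of which is needed for the trivial estimate. The intended quantity — mirroring Struwe's Lemma 3.8, which is a uniform absolute-continuity statement for $\nabla^2 u$ — is almost certainly $(-\Delta)^{1/2}u$ rather than $(-\Delta)^{1/4}u$, for which the statement is genuinely nontrivial and Lemma \ref{struwelemma3.6} is used to extract a contradiction along a sequence of solutions. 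A proof of the trivial version is not a proof of the intended version.

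Item~3 is where you depart most from the paper, and where there is a concrete gap. The paper transplants Struwe's Lemma 3.10: differentiate the flow equation in $t$, test against $u_t$, and deduce uniform-in-$t$ control of $\int|u_t(t)|^2$ (hence, through the stationary estimate in Proposition \ref{uniqueinenergyclassunderh1assumption}, a bound on $\sup_t\|u(t)\|_{H^1}$). That supplementary pointwise-in-$t$ information is what closes the bootstrap. Your alternative, a parabolic bootstrap via maximal regularity, does not close as written: Lemma \ref{theorem3.1hieberpruss} with $L^2$ data produces $W^{1,2}$, not $W^{1,p_1}$ for any $p_1>2$ — maximal regularity preserves the Lebesgue exponent, it does not improve it. Moreover, Lemma \ref{regularityestimatelemmaforhalfgradient} only controls $|d_{1/2}u(t)|^2$ in a fractional Sobolev space \emph{at fixed $t$}, under the hypothesis $u(t)\in H^1$; since $\|u(t)\|_{H^1}$ is only known to be in $L^2_t$ before the time-derivative estimate is established, the space-time integrability of $|d_{1/2}u|^2$ does not actually improve past $L^2$, and the iteration stalls at the base case. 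To make a parabolic bootstrap work here you would still need, as an input, the $L^\infty_t$ control that Struwe's test against $u_t$ supplies.
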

	
	For example, Lemma 3.7 in \cite{struwe1} follows by using $(-\Delta)^{1/2} u$ instead of $\Delta u$ and applying the estimates in Lemma \ref{struwelemma3.1}. Lemma 3.8 relies on choosing subsequences which can equally well be chosen for $(-\Delta)^{1/4} u$ and $(-\Delta)^{1/2} u$, compactness remains valid and the energy estimate in Lemma \ref{struwelemma3.6} replaces local energy estimate used in \cite{struwe1}. Naturally, Remark 3.9 also carries over, as the uniform absolute continuity is guaranteed by Lemma 3.8. Lastly, arguing as in \cite{struwe1} Lemma 3.10, using twice differentiable solutions of the half-harmonic flow, we may differentiate with respect to $t$ and test against $u_t$ to deduce precisely the same estimates for the $L^p$-norm of the RHS independent of $u$, i.e. only depending on the analogous terms as in Lemma 3.10 of \cite{struwe1}.
	
	This also leads to higher order estimates following the bootstrap techniques above and using the result \cite[Theorem 3.1]{hieber}, meaning that we may establish regularity up to time $T$. Extending as in \cite{struwe1} by restarting the flow at $T$ and using approximating sequences as in Lemma \ref{approximationlemmauhlenbeck} then show regularity of solutions with arbitrary initial datum by uniform convergence on sets with $t$ strictly bounded from below (Remark 3.11 applies to the case of regular initial datum, so in this case smoothness is also given at $t=0$). We emphasise that if we choose the initial energy sufficiently small, the localised energy $E_{R}$ will satisfy the necessary inequalities for all times, meaning global smooth existence is justified.\\
	
	We highlight at this point that the argument presented provides an alternative existence argument for the fractional harmonic gradient flow with values in $S^{n-1}$. Moreover, the techniques introduced can be used in order to study finite blow-up times and in the future investigate the types of blow-ups that can occur in finite time.

	\subsection{Convergence}
	
	Another important question is whether or not the solution $u$ of the fractional harmonic gradient flow converges as $t \to +\infty$, or rather for specific subsequences $t_k \to +\infty$. The considerations are completely analogous to \cite{struwe} and \cite{struwe1}.
	
	\begin{thm}
		Let $u \in L^{2}(\R_{+}; H^{1/2}(S^1))$ and $u_t \in L^{2}(\R_{+}; L^{2}(S^1))$ be a solution of the fractional harmonic gradient flow \eqref{gradflowuv} with values in $S^{n-1} \subset \R^{n}$ and with initial data $u_0$. Assume that:
		$$\| (-\Delta)^{1/4} u(t) \|_{L^2} \leq \| (-\Delta)^{1/4} u_0 \|_{L^2} \leq \varepsilon, \quad \forall t \in \R_{+},$$ 
		for $\varepsilon > 0$ sufficiently small. Then, for a suitably chosen subsequence $t_k \to +\infty$, the sequence of maps $(u(t_k, \cdot))_{k \in \mathbb{N}} \subset H^{1}(S^1;S^{n-1})$ converges weakly in $H^{1}(S^1)$ to a $1/2$-harmonic map in $S^{n-1}$.
	\end{thm}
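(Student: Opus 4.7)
The plan is to combine the global energy identity with the a priori $H^{1}$-regularity gain from Lemma \ref{reglemma} to extract a weakly convergent subsequence, and then to pass to the limit in the flow equation. First I would rederive the energy identity of Lemma \ref{monotone1/2energydecayinglemma} with equality: testing \eqref{gradflowuv} against $u_{t}$ and using $u\cdot u_{t}=0$ gives
\begin{equation*}
\tfrac{1}{2}\|(-\Delta)^{1/4}u(T)\|_{L^{2}(S^{1})}^{2} + \int_{0}^{T}\!\!\int_{S^{1}}|u_{t}|^{2}\,dx\,dt = \tfrac{1}{2}\|(-\Delta)^{1/4}u_{0}\|_{L^{2}(S^{1})}^{2}.
\end{equation*}
Hence $u_{t}\in L^{2}(\R_{+}\times S^{1})$, which forces the existence of a sequence $t_{k}\to+\infty$ along which $\|u_{t}(t_{k},\cdot)\|_{L^{2}(S^{1})}\to 0$.

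Second, for every such $t_{k}$ the equation rewrites as $(-\Delta)^{1/2}u(t_{k}) = u(t_{k})|d_{1/2}u(t_{k})|^{2} - u_{t}(t_{k})$, with $u(t_{k})\in H^{1/2}(S^{1};S^{n-1})$ of $1/2$-energy bounded by the small constant $\varepsilon$, and right-hand side forcing term $-u_{t}(t_{k})\in L^{2}(S^{1})$. Applying Lemma \ref{reglemma} together with the explicit estimate derived in the proof of Proposition \ref{uniqueinenergyclassunderh1assumption} (where the $|d_{1/2}u|^{2}$-term is absorbed into the left-hand side thanks to smallness of $\varepsilon$), I obtain, uniformly in $k$,
\begin{equation*}
\|u(t_{k})\|_{H^{1}(S^{1})}\lesssim 1 + \|u_{t}(t_{k})\|_{L^{2}(S^{1})}.
\end{equation*}
Thus $(u(t_{k}))_{k}$ is bounded in $H^{1}(S^{1})$. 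By Banach--Alaoglu and Rellich--Kondrachov, up to extraction of a further subsequence (still denoted in the same way), $u(t_{k})\rightharpoonup u_{\infty}$ weakly in $H^{1}(S^{1})$, strongly in $H^{s}(S^{1})$ for every $s<1$ (in particular in $\dot{H}^{1/2}$), and uniformly on $S^{1}$; the latter ensures $u_{\infty}(x)\in S^{n-1}$ for every $x\in S^{1}$.

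Finally I would pass to the limit in the weak formulation tested against an arbitrary $\varphi\in C^{\infty}(S^{1};\R^{n})$. The contribution $\int_{S^{1}}u_{t}(t_{k})\cdot\varphi\,dx$ vanishes in the limit because $u_{t}(t_{k})\to 0$ in $L^{2}$, and the linear fractional term $\int_{S^{1}}(-\Delta)^{1/4}u(t_{k})\cdot(-\Delta)^{1/4}\varphi\,dx$ converges thanks to the strong $\dot{H}^{1/2}$-convergence of $u(t_{k})$ to $u_{\infty}$. The main obstacle is the quadratic nonlinearity $\int_{S^{1}}u(t_{k})|d_{1/2}u(t_{k})|^{2}\varphi\,dx$, which involves a product of two only weakly converging factors. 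I would handle it by exploiting the strong convergence $d_{1/2}u(t_{k})\to d_{1/2}u_{\infty}$ in $L^{2}_{od}(S^{1}\times S^{1})$ — a consequence of the strong $\dot{H}^{1/2}$-convergence via Lemma \ref{fracgradest} — together with the uniform convergence $u(t_{k})\to u_{\infty}$. These two ingredients suffice to identify the limit of the nonlinear term as $\int_{S^{1}}u_{\infty}|d_{1/2}u_{\infty}|^{2}\varphi\,dx$. We thus obtain $(-\Delta)^{1/2}u_{\infty} = u_{\infty}|d_{1/2}u_{\infty}|^{2}$ in the sense of distributions, so $u_{\infty}$ is a weak $1/2$-harmonic map from $S^{1}$ into $S^{n-1}$, completing the argument.
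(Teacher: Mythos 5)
Your proposal is correct and follows essentially the same route as the paper: use the $H^1$-estimate $\|u(t)\|_{H^1}\lesssim 1+\|u_t(t)\|_{L^2}$ from the regularity step (Lemma \ref{reglemma} combined with the absorption argument of Proposition \ref{uniqueinenergyclassunderh1assumption}), extract $t_k\to\infty$ with $u_t(t_k)\to 0$ in $L^2$, deduce uniform $H^1$-bounds and weak $H^1$ plus compact $H^s$-convergence, and pass to the limit in the weak formulation. The only (cosmetic) difference is in handling the nonlinearity: you use the equivalence of Lemma \ref{fracgradest} to turn strong $\dot{H}^{1/2}$-convergence into strong $L^2_{od}$-convergence of $d_{1/2}u(t_k)$ and combine it with the uniform $C^0$-convergence of $u(t_k)$, whereas the paper splits $u|d_{1/2}u|^2 - u_\infty|d_{1/2}u_\infty|^2$ into three terms and estimates each by Hölder and compactness of $H^1\hookrightarrow H^{1/2}$; these are the same estimate organized differently. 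One remark on precision: the selection of $t_k$ should be made from the full-measure set of times where both $u(t)\in H^1(S^1)$ and $u_t(t)\in L^2(S^1)$ hold and the $H^1$-estimate is valid, which is implicitly what you (and the paper) do; this is worth stating explicitly since the estimate holds only for a.e.\ $t$.
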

	
	The proof proceeds completely analogous to the one for Theorem 6.6 in \cite{struwe}.
	
	\begin{proof}
	By the considerations in the proof of Proposition \ref{uniqueinenergyclassunderh1assumption}, we know that for $\varepsilon > 0$ sufficiently small, we have for almost every $t$:
	\begin{equation}
	\label{appendixdconveq00}
		\| \nabla u(t) \|_{L^{2}(S^1)} \lesssim \| \partial_{t} u(t) \|_{L^2(S^1)} + 1
	\end{equation}
	As in \cite{struwe}, this implies the following:
	\begin{equation}
	\label{appendixdconveq01}
		\int_{t}^{t+1} \int_{S^1} | \nabla u |^2 dx dt \lesssim \int_{t}^{t+1} \int_{S^1} | u_{t} |^2 dx dt + 1 \lesssim \| u_t \|_{L^2(\R_{+} \times S^1)} + 1,
	\end{equation}
	for all $t \in [0, \infty[$. Observe that the right handside of the estimate is bounded independently of $t$. It is also clear that:
	\begin{equation}
	\label{appendixdconveq02}
		\lim_{t \to + \infty} \int_{t}^{t+1} \int_{S^1} | u_{t} |^2 dx dt = 0,
	\end{equation}
	due to $u_t \in L^2(\R_{+} \times S^1) = L^{2}(\R_{+}; L^{2}(S^1))$.\\
	
	The observations in \eqref{appendixdconveq01} and \eqref{appendixdconveq02} show that we may choose a subsequence $t_k \to \infty$, such that:
	$$u(t_k) \to u_{\infty} \quad \text{ in $H^1(S^1)$ weakly},$$
	and $u_t (t_k) \to 0$ strongly in $L^2$. In fact, we may at first choose $t_k$ such that $L^2$-convergence is satisfied and such that \eqref{appendixdconveq00} holds for all elements in the sequence. Then extracting another subsequence, weak convergence in $H^1(S^1)$ is immediate due to the boundedness in \eqref{appendixdconveq00}. In addition, up to extracting another subsequence, the convergence also holds everywhere pointwise and thus:
	$$u_{\infty}(x) \in S^{n-1} \quad \text{ for almost every } x \in S^1$$
	Let now $\varphi \in C^{\infty}(S^1)$ and test the equation \eqref{gradflowuv} at the time $t_k$ with $\varphi$ which shows:
	\begin{align}
		\int_{S^1} \big{(} u_{t} (t_k) &+ (-\Delta)^{1/2} u(t_k) \big{)} \varphi dx \notag \\
															&= \int_{S^1} u_{t} (t_k) \varphi dx + \int_{S^1} \int_{S^1} d_{1/2} \left( u(t_k) \right)(x,y) d_{1/2}\varphi (x,y) \frac{dy dx}{| x-y |} \notag \\
															&= \int_{S^1} \int_{S^1} | d_{1/2} \left( u(t_k ) \right)(x,y)|^2  u(x) \varphi(x) \frac{dy dx}{| x-y |},
	\end{align}
	and since we know by $u_{t}(t_k) \to 0$ in $L^{2}(S^1)$ strongly and $u(t_k) \to u_{\infty}$ in $H^1(S^1)$ weakly, the left hand side converges for $t_k \to \infty$ to:
	$$\int_{S^1} (-\Delta)^{1/2} u_{\infty} \varphi dx$$
	On the other hand, the right handside does converge as well. Namely, observe that due to the compactness of $H^{1}(S^1) \hookrightarrow H^{1/2}(S^1)$ and H\"older's inequality, we have:
	$$\int_{S^1} \int_{S^1} d_{1/2} \left( u(t_k) - u_{\infty} \right)(x,y) d_{1/2}u(t_k)(x,y) u(x) \varphi(x) \frac{dy dx}{| x-y |} \to 0, \quad \text{ as $t_k \to \infty$}$$
	Similarily, we may see:
	$$\int_{S^1} \int_{S^1} d_{1/2} \left( u(t_k) - u_{\infty} \right)(x,y) d_{1/2}u_{\infty}(x,y) u(x) \varphi(x) \frac{dy dx}{| x-y |} \to 0, \quad \text{ as $t_k \to \infty$}$$
	So we merely have to consider:
	$$\int_{S^1} \int_{S^1} |d_{1/2} u_{\infty}(x,y) |^2 \left( u(x) - u_{\infty}(x) \right) \varphi(x) \frac{dy dx}{| x-y |},$$
	which converges to $0$ as well, which is an immediate consequence of dominated convergence and the boundedness of $u, u_{\infty}$.
	%by using the compactness of the embeddings as before. 
	Thus we have:
	\begin{align}
		\int_{S^1} \int_{S^1} | d_{1/2} u(x,y) |^2 &u(x) \varphi(x) \frac{dy dx}{| x-y |} \notag \\ 
					&\to \int_{S^1} \int_{S^1} | d_{1/2} u_{\infty} (x,y) |^2 u_{\infty}(x) \varphi(x) \frac{dy dx}{| x-y |}
	\end{align}
	So, we find the following by passing to the limit $t_k \to \infty$:
	\begin{equation}
		\int_{S^1} (-\Delta)^{1/2} u_{\infty} \varphi dx = \int_{S^1} u_{\infty} | d_{1/2} u_{\infty} |^{2} \varphi dx,
	\end{equation}
	which is equivalent to:
	$$(-\Delta)^{1/2} u_{\infty} \perp T_{u_{\infty}} N$$
	Therefore, $u_{\infty}$ is actually $1/2$-harmonic.
	\end{proof}
	
	One may even say more. By convergence, we may deduce:
	$$\| (-\Delta)^{1/2} u_{\infty} \|_{L^2} \leq \| (-\Delta)^{1/2} u_0 \|_{L^2} \leq \varepsilon,$$
	meaning an energy bound for the limit function. If $\varepsilon > 0$ is sufficiently small, we may deduce:
	$$u_{\infty} \text{ is a constant map}$$
	This assertion follows by lower energy bounds for $1/2$-harmonic maps, see for example \cite{sireweizheng} and the references presented therein.

	\begin{appendices}
	
	\section{Alternative Conclusion of Theorem \ref{uniqueness}: Estimate \eqref{estd1/2u}}
	
	\subsection{Preliminary Discussion}
	
	The goal of this first appendix is to provide an alternative proof of the final estimate \eqref{estd1/2u} by using direct methods rather than Theorem \ref{schiwangthm1.4} on $S^1$, see also Appendix B. We define the stereographic projection $\Pi: S^{1} \setminus \{ -i \} \to \R$ as follows:
	$$\Pi(\cos(\alpha) + i \sin(\alpha)) := \frac{\cos(\alpha)}{1 + \sin(\alpha)}, \quad \forall \alpha \in \R, \alpha \neq - \frac{\pi}{2} + 2 \pi \mathbb{Z}$$
	Let us state the following result found in \cite{dalio} as Proposition 1.1:
	
	\begin{prop}
	\label{connectioncircline}
		Let $u: \R \to \R^{n}$ and $v := u \circ \Pi: S^1 \setminus \{ -i \} \to \R^n$. Then we have:
		\begin{equation}
			(-\Delta)^{1/2}_{S^1} v(e^{i\theta}) = \frac{(-\Delta)^{1/2}_{\R} u(\Pi(e^{i \theta}))}{1+ \sin( \theta)},
		\end{equation}
		and where we observe:
		\begin{equation}
			\Pi'(\theta) = \frac{1}{1+\sin(\theta)}
		\end{equation}
	\end{prop}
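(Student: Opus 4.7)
My plan is to prove Proposition \ref{connectioncircline} directly from the principal value definition of the fractional Laplacian, using a change of variables induced by the stereographic projection $\Pi$. The computation hinges on two ingredients: an explicit formula for the Jacobian $|\Pi'(\theta)|$, and a conformal-type identity relating Euclidean distances $|x - y|$ on $\R$ to chordal distances $|e^{i\theta} - e^{i\phi}|$ on $S^1$.

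First, I would differentiate $\Pi(e^{i\theta}) = \cos(\theta)/(1+\sin(\theta))$ directly, obtaining $|\Pi'(\theta)| = 1/(1+\sin(\theta))$ after simplification using $\sin^2 + \cos^2 = 1$ (a sign ambiguity is harmless, since it gets absorbed by the orientation of the change of variables). Next, and this is the technical heart of the argument, I would establish the conformal identity
\begin{equation}
\label{conformalidentity}
	|x - y|^2 = \frac{|e^{i\theta} - e^{i\phi}|^2}{(1+\sin\theta)(1+\sin\phi)},
\end{equation}
where $x = \Pi(e^{i\theta})$ and $y = \Pi(e^{i\phi})$. To do this, I would first compute $x - y$ over the common denominator $(1+\sin\theta)(1+\sin\phi)$ and expand the numerator, which produces the expression $\cos\theta - \cos\phi - \sin(\theta - \phi)$. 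Applying the sum-to-product identities and using $1 + \sin\alpha = 2\sin^2(\pi/4 + \alpha/2)$, one can rewrite everything in terms of half-angles and verify \eqref{conformalidentity} by direct simplification using $|e^{i\theta} - e^{i\phi}|^2 = 4\sin^2((\theta-\phi)/2)$.

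With the Jacobian formula and \eqref{conformalidentity} in hand, I would insert them into the principal value representation
\begin{equation*}
	(-\Delta)^{1/2}_{\R} u(x) = C \cdot P.V. \int_{\R} \frac{u(x) - u(y)}{|x-y|^2} dy,
\end{equation*}
evaluated at $x = \Pi(e^{i\theta})$, and change variables to $\phi$ via $y = \Pi(e^{i\phi})$. The $(1+\sin\phi)^{-1}$ factor from the Jacobian $dy$ exactly cancels against one of the $(1+\sin\phi)$ factors coming from the denominator via \eqref{conformalidentity}. The remaining factor of $(1+\sin\theta)$ pulls outside the integral since it is independent of the integration variable, leaving precisely $(1+\sin\theta) \cdot (-\Delta)^{1/2}_{S^1} v(e^{i\theta})$, which rearranges to the claimed identity.

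The main obstacle will be the trigonometric manipulation in establishing \eqref{conformalidentity}; everything else is a formal consequence. A minor technical point is justifying that one may safely interchange principal values across the change of variables, but since $\Pi$ is a smooth diffeomorphism off the singular point $-i$, the symmetry of the neighbourhoods used to define the principal value is preserved and no additional boundary contribution appears. The identity itself is to be interpreted pointwise on $S^1 \setminus \{-i\}$, or distributionally after extending by continuity away from $-i$.
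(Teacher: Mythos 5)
Your proof is correct. The paper does not prove Proposition \ref{connectioncircline} itself --- it is cited from \cite{dalio} --- so there is no in-paper argument to compare against, but the conformal distance identity at the heart of your computation, $|\Pi(e^{i\theta}) - \Pi(e^{i\phi})|^2 = |e^{i\theta}-e^{i\phi}|^2/\big((1+\sin\theta)(1+\sin\phi)\big)$, is precisely what the paper derives in the proof of Proposition \ref{propappendixa1} (equations \eqref{appendixa1ep002}--\eqref{appendixa1ep003}, via $1+\sin\alpha = 2\sin^2(\alpha/2+\pi/4)$). The remaining steps --- the Jacobian $|\Pi'(\phi)|^{-1}=1+\sin\phi$ cancelling one denominator factor, the residual $1+\sin\theta$ pulling outside the integral, and the principal values being compatible under a smooth diffeomorphism --- are the standard change-of-variables argument and are handled correctly. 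Two minor remarks worth recording: first, direct differentiation of $\Pi(\theta) = \cos\theta/(1+\sin\theta)$ gives $\Pi'(\theta) = -1/(1+\sin\theta)$, so the paper's displayed formula drops a sign; as you observe, this is immaterial since only $|\Pi'|$ enters the substitution. Second, your argument tacitly assumes the same normalizing constant $C(1/2)$ appears in the principal-value representation of $(-\Delta)^{1/2}$ on both $\R$ and $S^1$; this is indeed true for $s=1/2$ in one dimension, because periodizing the kernel $|x-y|^{-2}$ yields exactly $\sum_{k\in\Z}(h+2\pi k)^{-2} = \big(4\sin^2(h/2)\big)^{-1} = |e^{ih}-1|^{-2}$, but it is a fact specific to $s=1/2$ and worth making explicit.
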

	This hints at a connection between the $1/2$-Laplacian on $S^1$ and the one on $\R$. We would like to exploit this relationship using the stereographic projection in order to apply the result in \cite{schiwang}, namely Theorem \ref{schiwangthm1.4} on $\R$, directly as needed in our proof above. Our starting point is the following identity which was part of an earlier argument, where we now denote by $\Pi(x_0) = x$ and $v := u \circ \Pi^{-1}$. 
	
	\begin{prop}
	\label{propappendixa1}
		We have the following identity for $u,v, x, x_0$ as previously introduced:
		\begin{equation}
			\int_{\R} \frac{| v(x) - v(y) |^2}{| x-y |^2}dy = \int_{S^{1}} \frac{| u(x_{0}) - u(y) |^{2}}{| x_{0}-y |^2} dy \cdot (1 + \sin(x_0))
		\end{equation}
	\end{prop}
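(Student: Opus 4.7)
The plan is to prove the identity by a direct change of variables on the left-hand side using the stereographic projection $\Pi$. The essential ingredients are the Jacobian of $\Pi^{-1}$ and an explicit formula relating $|\Pi(x_0) - \Pi(y_0)|$ to the chordal distance $|x_0 - y_0|$ on $S^1$.

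First, I would rewrite $\Pi$ in a half-angle form. Using $1 + \sin \alpha = 2\cos^2(\pi/4 - \alpha/2)$ and $\cos \alpha = 2 \sin(\pi/4 - \alpha/2) \cos(\pi/4 - \alpha/2)$, one obtains the identity
\begin{equation*}
\Pi(e^{i\alpha}) = \tan\!\left(\frac{\pi}{4} - \frac{\alpha}{2}\right).
\end{equation*}
Applying $\tan \phi - \tan \psi = \sin(\phi - \psi)/(\cos \phi \cos \psi)$ with $\phi = \pi/4 - x_0/2$, $\psi = \pi/4 - y_0/2$, one gets
\begin{equation*}
\Pi(x_0) - \Pi(y_0) = \frac{\sin((y_0 - x_0)/2)}{\cos(\pi/4 - x_0/2)\, \cos(\pi/4 - y_0/2)}.
\end{equation*}
Squaring, combining with the chordal distance $|x_0 - y_0| = 2 |\sin((x_0 - y_0)/2)|$ on $S^1$, and using the formula for $1+\sin$ recalled above yields the key identity
\begin{equation*}
|\Pi(x_0) - \Pi(y_0)|^2 = \frac{|x_0 - y_0|^2}{(1+\sin x_0)(1+\sin y_0)}.
\end{equation*}

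Next, I would perform the substitution $y = \Pi(y_0)$ in the integral on the left-hand side, with Jacobian $dy = \Pi'(y_0) \, dy_0 = \frac{dy_0}{1+\sin y_0}$ as recorded in Proposition \ref{connectioncircline}. Since $v = u \circ \Pi^{-1}$, we have $v(y) = u(y_0)$ and $v(x) = u(x_0)$. Inserting these, together with the distance identity above, into
\begin{equation*}
\int_{\R} \frac{|v(x) - v(y)|^2}{|x-y|^2}\, dy = \int_{S^1} \frac{|u(x_0) - u(y_0)|^2 \,(1+\sin x_0)(1+\sin y_0)}{|x_0 - y_0|^2} \cdot \frac{dy_0}{1+\sin y_0},
\end{equation*}
the factor $(1+\sin y_0)$ cancels, and $(1+\sin x_0)$ pulls out of the integral, giving precisely the desired identity.

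Since none of the individual steps requires a genuine estimate beyond trigonometric bookkeeping, there is no real obstacle: the only thing to be careful about is the boundary behavior at the antipodal point $-i$ where $\Pi$ blows up, but this is a measure-zero set on $S^1$ so it does not affect the integrals. The main technical point is therefore the derivation of the distance formula $|\Pi(x_0) - \Pi(y_0)|^2 = |x_0-y_0|^2 / [(1+\sin x_0)(1+\sin y_0)]$, which is the lemma that makes everything fit together cleanly.
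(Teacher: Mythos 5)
Your proof is correct and follows essentially the same strategy as the paper's: change variables via the stereographic projection, compute the Jacobian $|\Pi'| = (1+\sin y_0)^{-1}$, and reduce everything to the key distance identity $|\Pi(x_0) - \Pi(y_0)|^2 = |x_0 - y_0|^2 / [(1+\sin x_0)(1+\sin y_0)]$. The only difference is a nicer trigonometric route to that identity — rewriting $\Pi$ as $\tan(\pi/4 - \alpha/2)$ and invoking the tangent subtraction formula, rather than the paper's more laborious expansion of $\cos x_0 + \cos x_0 \sin y - \cos y - \cos y \sin x_0$ via product-to-sum formulas; this is a clean stylistic improvement but not a structurally different argument.
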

	
	\begin{proof}
	After a change of variables and obvious estimates, we arrive at:
	\begin{equation}
	\label{appendixa1ep001}
		\int_{\R} \frac{| v(x) - v(y) |^2}{| x-y |^2}dy = \int_{S^1} \frac{| v(\Pi(x_0)) - v(\Pi(y)) |^2}{4 \sin\left( \frac{x_0 - y}{2} \right)^2} \frac{4 \sin\left( \frac{x_0 -y}{2} \right)^2}{| \Pi(x_0)-\Pi(y) |^2} \frac{1}{1 + \sin(y)} dy
	\end{equation}
	Thus, the fractional gradient norm over $\R$ is bounded for $v$. Let us note:
	\begin{align}
	\label{appendixa1ep002}
		| \Pi(x_0) - \Pi(y) |	&= \frac{| \cos(x_0) + \cos(x_0) \sin(y) - \cos(y) - \cos(y) \sin(x_0) |}{(1+\sin(x_0))(1+\sin(y))} \notag \\
						&= \frac{| \cos(x_0) - \cos(y) + \sin(y - x_0) |}{(1+\sin(x_0))(1+\sin(y))} \notag \\
						&= \frac{| - 2 \sin \left( \frac{y + x_0}{2} \right) \sin \left( \frac{x_0 - y}{2} \right) + 2 \sin \left( \frac{y- x_0}{2} \right) \cos \left( \frac{y - x_0 }{2} \right) |}{(1+\sin(x_0))(1+\sin(y))} \notag \\
						&= \frac{| 2 \sin \left( \frac{y + x_0}{2} \right) \sin \left( \frac{y - x_0}{2} \right) + 2 \sin \left( \frac{y- x_0}{2} \right) \cos \left( \frac{y - x_0 }{2} \right) |}{(1+\sin(x_0))(1+\sin(y))} \notag \\
						&= 2 \frac{| \sin\left( \frac{y - x_0}{2} \right) |}{(1+ \sin(x_0))(1+\sin(y))} \left| \sin\left( \frac{y + x_0}{2} \right) + \cos\left( \frac{y - x_0}{2}\right) \right| \notag \\
						&= 2 \frac{| \sin\left( \frac{y - x_0}{2} \right) |}{(1+ \sin(x_0))(1+\sin(y))} \cdot 2 \left| \sin \left( \frac{y}{2} + \frac{\pi}{4} \right) \sin \left( \frac{x_0}{2} + \frac{\pi}{4} \right)  \right| 
	\end{align}
	Therefore:
	\begin{align}
	\label{appendixa1ep003}
		\frac{4 \sin\left( \frac{x_0 -y}{2} \right)^2}{| \Pi(x_0)-\Pi(y) |^2} \frac{1}{1 + \sin(y))}	&= \frac{(1+ \sin(y)) (1 + \sin(x_0 ))^2}{4\left| \sin \left( \frac{y}{2} + \frac{\pi}{4} \right) \sin \left( \frac{x_0}{2} + \frac{\pi}{4} \right) \right|^2} \notag \\
																		&= \frac{1 + \sin(y)}{2 \left| \sin \left( \frac{y}{2} + \frac{\pi}{4} \right) \right|^2} \cdot \frac{(1 + \sin(x_0))^2}{2 \left| \sin \left( \frac{x_0}{2} + \frac{\pi}{4} \right) \right|^2} 
	\end{align}
	 This is already sufficient to conclude the proof by combining \eqref{appendixa1ep001}, \eqref{appendixa1ep002} and \eqref{appendixa1ep003}. Indeed, it can be obtained by observing that:
	$$\frac{1 + \sin(y)}{2 \left| \sin \left( \frac{y}{2} + \frac{\pi}{4} \right) \right|^2} = 1,$$
	by using the half-angle formula that:
	\begin{align}
		\int_{\R} \frac{| v(x) - v(y) |^2}{| x-y |^2}dy 	&= \int_{S^{1}} \frac{| u(x_{0}) - u(y) |^{2}}{| x_{0}-y |^2} dy \cdot \frac{(1 + \sin(x_0))^2}{2 \left| \sin \left( \frac{x_0}{2} + \frac{\pi}{4} \right) \right|^2} \notag \\
										&= \int_{S^{1}} \frac{| u(x_{0}) - u(y) |^{2}}{| x_{0}-y |^2} dy \cdot (1 + \sin(x_0))
	\end{align}
	and thus providing the desired connecting identity between $\R$ and $S^1$.
	\end{proof}
	
	However, this immediately yields that changing domains by virtue of the stereographic projection is not sufficiently well-behaved for the fractional gradient to preserve arbitrary fractional norms, as the $L^4$-norm does not transform as required and thus obstructing an equivalence between $L^4$ on the circle and on $\R$. The obstruction is visible in the remaining factor $1+\sin(x_0))$ of Proposition \ref{propappendixa1}. Therefore, further ideas are necessary.\\
	
	A different approach involves periodically extending the function on $S^1$ to a function $U$ with a cut-off after a finite number of periods, a technique explored afterwards. Let us present the main ideas informally first: This extension procedure allows us to have an immediate equivalence of the $L^4$-norms at the beginning of \eqref{estd1/2u} and the corresponding one for $U$ with the distance function changed suitably. The argument in \eqref{estd1/2u} then carries on as specified on $\R$. The $(-\Delta)^{1/2} U$ norm can be easily estimated by using the Riesz transform to go over to the classical weak derivative which can be estimated by the corresponding quantity on $S^1$. It thus remains to connect the $L^{2}$-norm of $(-\Delta)^{1/4}U$ with the one of $u$. This is done in the same way as connecting the $L^4$-norms due to the immediate estimates for $| d_{1/2} u |^2$. This then finishes the proof of \eqref{estd1/2u} and therefore also of Lemma \ref{uniqueness}.\\
	
	As a final comment on the previous result, let us mention that naturally, by combining the results for periodic distributions in \cite{schmeitrieb} with the ideas in \cite{schiwang}, we could obtain the very same identifications as there, thus providing the first inequality in \eqref{estd1/2u} immediately for free. This is the very same argument as we already mentioned in section 2 and explored in Appendix B. It then suffices to apply Lemma \ref{ladys1} to conclude.

	\subsection{Estimate for Fractional Gradients using Periodic Extension}
	
	Let us first take $u \in H^{1/2}(S^1)$ and extend it periodically to $\R$ and denote this extension by $U$. Next, we choose any $\varphi \in C^{\infty}_{c}([-3\pi, 3\pi])$ and define:
	$$V := U \cdot \varphi$$
	We may assume that $\varphi = 1$ on $[-2\pi, 2\pi]$ and $\varphi = 0$ for $x \in \R \setminus ]-\frac{5}{2}\pi, \frac{5}{2} \pi[$. We notice that for every $x \in [-\pi, \pi]$:
	\begin{align}
		| d_{1/2} V |^{2}(x)	&= \int_{\R} \frac{| V(x) - V(y) |^2}{| x - y |^2} dy \notag \\
						&\geq \int_{[x-\pi, x + \pi]} \frac{| V(x) - V(y) |^2}{| x - y |^2} dy \notag \\
						&\geq C \int_{S^1} \frac{| u(x) - u(y) |^2}{| x - y |^2} dy \notag \\
						&= C | d_{1/2} u |^2(x)
	\end{align}
	Observe that we exchanged the distance function on the real line for the one on the circle which are equivalent on the interval we consider with constants independent of $x$. Notice that the cut-off function $\varphi$ has been chosen in such a way that the argument works. Therefore, we may deduce:
	$$\int_{S^1} | d_{1/2} u |^4(x) dx \leq C \int_{\R} | d_{1/2} V |^{4}(x) dx$$
	Assuming even that $u \in H^1(S^1)$, it is clear that $V \in H^1(\R)$ and because of $H^{1}(S^1) \subset H^{1/2}(S^1)$, the estimate for the $L^{4}$-norm applies to this situation. We may therefore deduce from the Ladyzhenskaya-type estimate in Lemma \ref{ladys1} and the equivalent characterisation of the norm in \cite{schiwang}, see Theorem \ref{schiwangthm1.4} for $\R$:
	$$\int_{\R} | d_{1/2} V |^{4}(x) dx \sim \| (-\Delta)^{1/4} V \|_{L^4}^{4} \leq C \| (-\Delta)^{1/4} V \|_{L^2}^2 \cdot \| (-\Delta)^{1/2} V \|_{L^2}^2 \leq C' \| (-\Delta)^{1/4} V \|_{L^2}^2 \cdot \| \nabla V \|_{L^2}^2$$
	Notice that the equivalence at the beginning of the estimate is due to \cite{schiwang}. We observe that:
	$$\nabla V = \nabla U \cdot \varphi + U \cdot \nabla \varphi,$$
	therefore the $H^1$-norm of $V$ may be estimated by the $H^1$-norm of $u$:
	$$\| \nabla V \|_{L^2}^2 \leq C \| u \|_{H^1(S^1)}^2$$
	On the other hand, we may deduce that:
	$$\| (-\Delta)^{1/4} V \|_{L^2}^2 \leq C \int_{\R} \int_{\R} \frac{| V(x) - V(y) |^2}{| x - y |^2} dx dy \leq C' \| u \|_{H^{1/2}(S^1)},$$
	where the second inequality is easily established by direct means using the cut-off $\varphi$. Namely, if we write $I := [-3 \pi, 3\pi]$:
	\begin{align}
		\int_{\R} \int_{\R} \frac{| V(x) - V(y) |^2}{| x - y |^2} dx dy	&= \int_{I} \int_{I} \frac{| V(x) - V(y) |^2}{| x - y |^2} dx dy + 2 \int_{I} \int_{I^{c}} \frac{| V(x) |^2}{| x-y |^2} dy dx \notag \\
													&\lesssim \int_{I} \int_{I} \frac{| U(x) - U(y) |^2}{| x-y |^2} dy dx + \int_{I} \int_{I} | U(y) |^2 \| \nabla \varphi \|_{\infty} dy dx + \int_{I} | V(x) |^2 dx \notag \\
													&\lesssim \| U \|_{H^{1/2}(I)},
	\end{align}
	where we used that the integral of $1/|x-y|^2$ in the second summand of the first line is taken over a domain $| x-y | > \delta > 0$ thanks to the cut-off ensuring that $x$ lying in a strict subset of $I$ is necessary for $| V(x) |/| x-y |^2 \neq 0$. We thus need to establish a connection between the norm of $U$ and the one of $u$. For the $L^2$-norms, such a relationship is obvious. Regarding the $H^{1/2}$-seminorm, this follows rather easily as well by means of a direct comparison and using the decrease of $1/| x-y |^2$ and comparing it to the periodic distance on $S^1$. The claim is thus established.\\
	
	Let us now observe that in the beginning of these calculations, we could have assumed that $\dashint_{S^1} u dx = 0$ or, alternatively, used $u - \hat{u}(0)$ instead of $u$, simply because of $d_{1/2} u$ annihilating constants. Notice that:
	$$\| u - \hat{u}(0) \|_{H^{1/2}(S^1)} \leq C \| u - \hat{u}(0) \|_{\dot{H}^{1/2}(S^1)} = C \| u \|_{\dot{H}^{1/2}(S^1)}$$
	So we arrive at the following estimate by combining all these considerations (using similar ones for $H^{1}$ versus $\dot{H}^{1}$) for $u - \hat{u}(0)$:
	\begin{equation}
		\int_{S^1} | d_{1/2} u |^4(x) dx = \int_{S^1} | d_{1/2} (u - \hat{u}(0)) |^4(x) dx \leq \tilde{C} \| u \|_{\dot{H}^{1/2}(S^1)}^2 \cdot \| u \|_{\dot{H}^{1}(S^1)}^2
	\end{equation}
	This is precisely the inequality used in the proof of improved regularity in the proof of uniqueness and/or the proof of improved regularity.

	\section{Further useful Results}

	\subsection{Wente-type result for Fractional Gradients on the Circle: Lemma \ref{fractionalwentelemmaons1insec2}}
	
	Let us assume that $F \in L^{2}_{od}(S^1 \times S^1)$ and $g \in H^{1/2}(S^1)$. Moreover, we assume that:
	$$\div_{1/2} F = 0,$$
	i.e. that:
	$$\int_{S^1} \int_{S^1} F(x,y) d_{1/2} \varphi(x,y) \frac{dx dy}{| x-y |} = 0, \quad \forall \varphi \in C^{\infty}(S^1)$$
	Our goal is to show that the following holds:
	$$\forall \varphi \in C^{\infty}(S^1): \left| \int_{S^1} F \cdot d_{1/2} g(x) \varphi(x) dx \right| \leq C \| \varphi \|_{\dot{H}^{1/2}},$$
	with:
	$$C \lesssim \| F \|_{L^{2}_{od}} \| g \|_{\dot{H}^{1/2}}$$
	This implies that $F \cdot d_{1/2} g \in H^{-1/2}(S^1)$ which would in turn enable us to solve equations like:
	$$(-\Delta)^{1/2} u = F \cdot d_{1/2} g - c,$$
	where $c = \int_{S^1} F \cdot d_{1/2} g(x) dx$ for some $u \in H^{1/2}(S^1)$ with appropriate estimates. This is the kind of fractional Wente-type estimate we would like to use. To prove this, we observe that by using the stereographic projection $\Pi$ as in Proposition \ref{connectioncircline}, we then have for:
	$$F': \R \times \R \to \R, F'(x,y) = F(\Pi^{-1}(x), \Pi^{-1}(y)), \quad g': \R \to \R, g'(x) = g(\Pi^{-1}(x))$$
	We observe the following for $\varphi \in C^{\infty}(S^1)$ compactly supported in $S^1 \setminus \{ -i \}$ and the previously studied factor $h(z) = \frac{1+ \sin(z)}{2 \left| \sin \left( \frac{z}{2} + \frac{\pi}{4} \right) \right|^2} = 1$ and thus also for:
	$$\tilde{h}(x) := \frac{1}{h(\Pi^{-1}(x))} = 1,$$
	which we may use to obtain the following chain of equations following the computations in the proof of Proposition \ref{propappendixa1}, especially \eqref{appendixa1ep003} to rewrite the quotient of the distance functions on $\R$ and $S^1$, and a change of variables:
	\begin{align}
		\int_{S^1} \int_{S^1} 		&F(z,w) \frac{g(z) - g(w)}{| z - w |^{1/2}} \varphi(z) \frac{dz dw}{| z-w |}	\notag \\
							&=	\int_{\R} \int_{\R} F'(x,y) \frac{g'(x) - g'(y)}{| x-y |^{1/2}} \varphi(\pi^{-1}(x))\left( \frac{8 \left| \sin \left( \frac{\Pi^{-1}(x)}{2} + \frac{\pi}{4} \right) \right|^3 \cdot 8 \left| \sin \left( \frac{\Pi^{-1}(y)}{2} + \frac{\pi}{4} \right) \right|^3}{(1+\sin(\Pi^{-1}(x)))(1+\sin(\Pi^{-1}(y)))} \right)^{1/2} \frac{dx dy}{| x-y |} \notag \\
							&= \int_{\R} \int_{\R} \tilde{F}(x,y) \frac{g'(x) - g'(y)}{| x-y |^{1/2}} \varphi(\Pi^{-1}(x)) \frac{dx dy}{| x-y |},
	\end{align}
	(see below for the definition of $\tilde{F}$) and we observe that $\tilde{h} = 1$ on $\R$ and that:
	\begin{align}
		\int_{\R} \int_{\R} 	&\left(2 \left| \sin \left( \frac{\Pi^{-1}(x)}{2} + \frac{\pi}{4} \right) \right|^{1/2} \left| \sin \left( \frac{\Pi^{-1}(y)}{2} + \frac{\pi}{4} \right) \right|^{1/2} \cdot F'(x,y) \right)^2 \frac{dxdy}{| x-y |}	\notag \\
						&= \int_{\R} \int_{\R} 4 \left| \sin \left( \frac{\Pi^{-1}(x)}{2} + \frac{\pi}{4} \right) \right| \left| \sin \left( \frac{\Pi^{-1}(y)}{2} + \frac{\pi}{4} \right) \right| \cdot |F'(x,y)|^2\frac{dxdy}{| x-y |} \notag \\
						&= \int_{S^1} \int_{S^1} |F(z,w)|^2 \frac{dz dw}{| z-w |},
	\end{align}
	so we observe that if $F \in L^{2}_{od}(S^1 \times S^1)$, then the same holds true for 
	$$\tilde{F} := 2 \left| \sin \left( \frac{\Pi^{-1}(x)}{2} + \frac{\pi}{4} \right) \right|^{1/2} \left| \sin \left( \frac{\Pi^{-1}(y)}{2} + \frac{\pi}{4} \right) \right|^{1/2} \cdot F'(x,y)$$
	for the domain $\R$ instead of $S^1$. This is the ideal starting point for a generalisation of Theorem 2.1 in \cite{mazoschi}, as we have now found the substitute for $F$ on the real line. 
	Next, we observe that we have for any constant $C \in \R$:
	\begin{align}
		\int_{S^1} \int_{S^1} 	F(z,w) \frac{g(z) - g(w)}{| z - w |^{1/2}} \varphi(z) \frac{dz dw}{| z-w |}	&= \int_{S^1} \int_{S^1} F(z,w) (g(z) - C) \frac{\varphi(z) - \varphi(y)}{| z-w |^{1/2}} \frac{dz dw}{| z-w |} \notag \\
																				&= \int_{\R} \int_{\R} \tilde{F}(x,y) (g'(x) - C) d_{1/2} \varphi(\Pi^{-1}(x),\Pi^{-1}(y)) \frac{dx dy}{| x-y |} \notag
	\end{align}
	by using $\div_{1/2} F = 0$. If $\varphi = 1$, we may even notice (observe that the compact support is not relevant to the computations above) that then $\div_{1/2} ( \tilde{F}(x,y)) = 0$. Therefore, the arguments in the proof of Theorem 2.1 become immediately applicable to $\tilde{F}$ and $g'$.
	Hence, this leads us to the realisation:
	$$\tilde{F} \cdot d_{1/2} g' \in \mathcal{H}^{1}(\R),$$
	with the Wente-type estimate found in the preliminary section as well as in \cite{mazoschi}. Observing that $\dot{H}^{1/2}(\R)$ continuously embeds into $BMO(\R)$, we therefore find that $\tilde{F} \cdot d_{1/2} g' \in H^{-1/2}(\R)$. Pulling now back to $S^1$, we may obtain use for smooth compactly supported $\varphi$ on $S^1 \setminus \{ -i \}$:
	\begin{align}
		\int_{S^{1}} \varphi(z) F \cdot d_{1/2}g(z) dz	&= \int_{S^1} \int_{S^1} \varphi(z) F(z,w) \frac{g(z) - g(w)}{| z-w |^{1/2}} \frac{dz dw}{| z-w |} \notag \\
												&= \int_{\R} \int_{\R} \tilde{F}(x,y) \frac{g'(x) - g'(y)}{| x-y |^{1/2}} \varphi(\Pi^{-1}(x)) \frac{dx dy}{| x-y |},
	\end{align}
	The estimate on the circle may thus be obtained from the one on the real line, at least for smooth compactly supported functions on the complement of a point, since the very same argument works with respect to the stereographic projection with respect to any point on the circle.\\
	
	To deduce the result on the entire circle, i.e. $F \cdot d_{1/2} g \in H^{-1/2}(S^1)$, we split any smooth function using a fixed partition of unity into two parts supported each on a compact subset of the complement of a point, the points for example being the north and south pole, and apply the estimate from the real line to each of these parts, using stereographic projections with respect to two different points. Observe that the Gagliardo seminorn and the $L^2$-norm of the parts are controlled by the original (semi-)norm of the smooth function. Therefore, we obtain the desired Wente-type estimate.\\
	
	To close this argument, let us observe that for a suitable $c \in \R$ (given by the integral of $F \cdot d_{1/2} g$ over the circle), we can thus obtain the following estimate:
	$$\left| \int_{S^{1}} (F \cdot d_{1/2} g(z) - c) \varphi(z) dz \right| \leq C \| F \|_{L^{2}_{od}} \| g \|_{H^{1/2}} \| \varphi \|_{\dot{H}^{1/2}}$$
	This is clear by going over to Fourier coefficients on the circle. This can be rephrased as:
	\begin{equation}
	\label{fracwenteons1}
		\| F \cdot d_{1/2} g - c \|_{H^{-1/2}} \leq C \| F \|_{L^{2}_{od}} \| g \|_{H^{1/2}}
	\end{equation}
	
	\subsection{Version of Theorem \ref{schiwangthm1.4} on $S^1$}

	In this section, we shall prove the following:
	
	\begin{thm}
	\label{schiwangthm1.4ons1}
		Let $s \in (0,1)$, $p,q \in ]1, \infty[$ and $f \in L^p(\R)$. Then, if $p > \frac{nq}{n + sq}$, we also have the inclusion $\dot{F}^{s}_{p,q}(S^1) \subset \dot{W}^{s,(p,q)}(S^1)$ together with an estimate:
		$$\| f \|_{\dot{W}^{s, (p,q)}(S^1)} \lesssim \| f \|_{\dot{F}^{s}_{p,q}(S^1)}$$
		The constant depends on $s, p, q, n$.
	\end{thm}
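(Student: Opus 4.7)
The plan is to mirror the argument for the Euclidean case of Theorem \ref{schiwangthm1.4} (proven in \cite{schiwang}) and transplant it to the torus, relying on the fact that the periodic Littlewood--Paley and Peetre maximal machinery on $S^1$ (see \cite{schmeitrieb}, Sections 3.3.5 and 3.5) functions identically to its analogue on $\R^n$. First I would fix a smooth dyadic partition of unity $(\varphi_j)_{j \in \mathbb{N}}$ as in the definition of $F^{s}_{p,q}(S^1)$, set $\Delta_j f := \sum_{k \in \Z} \varphi_j(k) \hat{f}(k) e^{ikx}$, so that $f = \sum_j \Delta_j f$ in $\mathcal{D}'(S^1)$ (modulo constants in the homogeneous setting). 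For any $r \in (0, p)$ with $r < \frac{nq}{n+sq}$ and $s - n/r > -n/q$ still holding, introduce the Peetre maximal function $(\Delta_j f)^{*}_r(x) := \sup_{y \in S^1} \frac{|\Delta_j f(y)|}{(1+2^{j} |x-y|)^{1/r}}$, and recall its standard $L^p$--bound against $M(|\Delta_j f|^r)^{1/r}$ via the Fefferman--Stein inequality, valid on $S^1$ by \cite[Section 3.4]{schmeitrieb}.

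Second, I would produce the pointwise estimate at the heart of the argument: for each $x, y \in S^1$,
\begin{equation*}
|f(x) - f(y)| \lesssim \sum_{j \geq 0} \min\bigl(1, 2^j |x-y|\bigr) \cdot (1 + 2^j |x-y|)^{1/r} \bigl( (\Delta_j f)^{*}_r(x) + (\Delta_j f)^{*}_r(y) \bigr),
\end{equation*}
which follows by applying the mean value theorem and Bernstein's inequality to $\Delta_j f$ in the high-frequency range $2^j |x-y| \leq 1$, and a crude $L^{\infty}$ bound against the Peetre maximal function in the low-frequency range $2^j |x-y| \geq 1$. Plugging this into $\mathcal{D}_{s,q}(f)(x)$, splitting the $y$-integral into dyadic annuli $|x-y| \sim 2^{-k}$, and interchanging the resulting double sum would give, after summing geometric series in $k$ (this is where one needs $s < 1$ for convergence at the high-frequency end and $s > 0$, combined with $p > nq/(n+sq)$, at the low-frequency end),
\begin{equation*}
\mathcal{D}_{s,q}(f)(x) \lesssim \left( \sum_{j \geq 0} 2^{jsq} \bigl( (\Delta_j f)^{*}_r(x) \bigr)^{q} \right)^{1/q}.
\end{equation*}

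Third, I would take $L^p$ norms and invoke the Fefferman--Stein vector-valued maximal inequality to replace $(\Delta_j f)^{*}_r$ by $|\Delta_j f|$ up to a constant, yielding
\begin{equation*}
\| f \|_{\dot{W}^{s,(p,q)}(S^1)} \lesssim \Bigl\| \bigl( 2^{js} |\Delta_j f| \bigr)_{j} \Bigr\|_{L^p(\ell^q)} = \| f \|_{\dot{F}^{s}_{p,q}(S^1)},
\end{equation*}
which is the desired estimate. The admissibility condition $r < p$ and $r < nq/(n+sq)$ is exactly the content of $p > nq/(n+sq)$, justifying the choice of $r$ at the outset. The main obstacle will be the careful bookkeeping of the dyadic sum-interchange and the verification that on $S^1$, the distance $|x-y|=2|\sin((x-y)/2)|$ does not disrupt the Euclidean-style estimates (which is harmless since the two metrics are equivalent). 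Periodicity and compactness simplify rather than complicate matters, so the argument goes through without any substantive change from \cite{schiwang} once the periodic Littlewood--Paley calculus is in place.
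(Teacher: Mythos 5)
Your proposal follows the same blueprint as the paper: both transplant the Euclidean argument of \cite{schiwang} to $S^1$ via the periodic Littlewood--Paley and maximal-function machinery of \cite{schmeitrieb}, split the Gagliardo-type integral into dyadic annuli $|x-y|\sim 2^{-k}$, interchange the sums in $j$ and $k$, and finish with the Fefferman--Stein vector-valued maximal inequality. The only cosmetic difference is that you phrase the pointwise estimate in terms of the Peetre maximal function $(\Delta_j f)^{*}_r$, whereas the paper's Lemma \ref{estimatefortrigpoly} passes directly to $(M|f_j|^r)^{1/r}$; these are interchangeable via Peetre's inequality, so this is presentation rather than substance.

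There is, however, a sign error that, if taken literally, makes your stated conditions contradictory. You ask for $r\in(0,p)$ with ``$r < \frac{nq}{n+sq}$ and $s - n/r > -n/q$ still holding,'' but the second inequality is equivalent to $r > \frac{nq}{n+sq}$, so the two requirements are mutually exclusive. The correct constraint is $\frac{nq}{n+sq} < r < \min(p,q)$: the lower bound is what makes the geometric $k$-sum in the range $j>k$ telescope to the correct weight $2^{jsq}$, and the upper bound (which must include $r<q$, not merely $r<p$) is forced by applying Fefferman--Stein to $(|\Delta_j f|^r)_j$ in $L^{p/r}(\ell^{q/r})$. Since $\frac{nq}{n+sq} < q$ always holds, solvability of this two-sided constraint on $r$ is exactly the hypothesis $p > \frac{nq}{n+sq}$. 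Your closing sentence repeats the same inversion (``$r<p$ and $r < nq/(n+sq)$ is exactly the content of $p > nq/(n+sq)$''), so it is not a one-off slip; everywhere it should read $r > \frac{nq}{n+sq}$.
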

	
	This is in fact the only part of Theorem \ref{schiwangthm1.4} we use throughout the current paper. The proof proceeds as in \cite{schiwang}, see in particular the fourth section in this reference.
	
	\begin{proof}
		First, we notice that the following result, Lemma 4.4 in \cite{schiwang}, continues to hold true:
		
		\begin{lem}
		\label{estimatefortrigpoly}
			Let $k \in \Z,j \in \mathbb{N}$ and $f_j$ be the $j$-th Littlewood-Paley projection of $f$ a periodic distribution on $\R$ (or equivalently an distribution on $S^1$):
			$$f_{j}(x) := \sum_{k \in \Z} \varphi_{j}(k) \hat{f}(k) e^{ikx},$$
			where $\varphi_{j}$ are as in the definition of the Triebel-Lizorkin spaces in section 2. Assume that $x,y \in \R$ together with $| x-y | \sim 2^{-k}$. Then for every $r > 0$, we have:
			\begin{align}
				| f_{j}(x) - f_{j}(y) | 	&\lesssim 2^{j-k} (1 + 2^{j-k})^{n/r} \left( M | f_{j} |^{r}(x) \right)^{1/r} \\
				| f_{j}(y) | 			&\lesssim (1 + 2^{j-k})^{n/r} \left( M | f_{j} |^{r}(x) \right)^{1/r}
			\end{align}
			where $Mg$ denotes the Littlewood-Paley maximal function and the constants only depend on $r$.
		\end{lem}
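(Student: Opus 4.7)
My plan is to adapt the standard proof of this Bernstein/Peetre-type estimate, which appears as Lemma 4.4 in \cite{schiwang} and ultimately rests on techniques from Chapter 2 of \cite{triebel1}. The central tool is the Peetre maximal function at scale $j$, namely
\begin{equation}
f_j^{*}(x) := \sup_{y \in \R^n} \frac{|f_j(y)|}{(1+2^j|x-y|)^{n/r}}.
\end{equation}
Because $f_j$ has Fourier support in an annulus of radius $\sim 2^j$, the Peetre--Fefferman--Stein inequality yields $f_j^{*}(x) \lesssim (M|f_j|^r(x))^{1/r}$ with constant depending only on $r$ and on the Schwartz profile of the Littlewood--Paley cut-off $\varphi_j$.

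The second inequality then follows directly: by the very definition of $f_j^{*}$ one has $|f_j(y)| \leq (1+2^j|x-y|)^{n/r} f_j^{*}(x)$, and under the hypothesis $|x-y| \sim 2^{-k}$ this gives the factor $(1+2^{j-k})^{n/r}$; the Peetre bound then replaces $f_j^{*}(x)$ by $(M|f_j|^r(x))^{1/r}$.

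For the first inequality I would invoke the mean value theorem to write
\begin{equation}
|f_j(x)-f_j(y)| \leq |x-y|\, \sup_{z \in [x,y]} |\nabla f_j(z)|,
\end{equation}
and then establish a pointwise Bernstein-type bound $|\nabla f_j(z)| \lesssim 2^j (1+2^{j-k})^{n/r} f_j^{*}(x)$ for $z$ on the segment from $x$ to $y$. To get this, I would choose a Schwartz reproducing kernel $\tilde\varphi$ with $\tilde\varphi_j(w) := 2^{jn}\tilde\varphi(2^j w)$ and $\widehat{\tilde\varphi_j} \equiv 1$ on the Fourier support of $f_j$, so that $\nabla f_j = (\nabla \tilde\varphi_j) * f_j$ and the gradient picks up a factor $2^j$ from scaling. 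Inside the convolution I would apply Peetre's inequality to dominate $|f_j(w)|$ by $(1+2^j|x-w|)^{n/r} f_j^{*}(x)$ and use the Schwartz decay of $\nabla\tilde\varphi$ together with $|x-z| \leq |x-y| \sim 2^{-k}$ to absorb the $(1+2^j|x-w|)^{n/r}$ factor into $(1+2^{j-k})^{n/r}$. Multiplying by $|x-y| \sim 2^{-k}$ and invoking the maximal function bound for $f_j^{*}$ concludes.

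The main technical obstacle is the bound $f_j^{*}(x) \lesssim (M|f_j|^r(x))^{1/r}$ for arbitrary $r > 0$: for small $r$ (in particular $r < 1$) the elementary Hardy--Littlewood approach fails and one must use the band-limited nature of $f_j$ in a genuine way, roughly by bounding $f_j^{*}$ by a convolution of $|f_j|$ against a Schwartz kernel and then extracting $(|f_j|^r)^{1/r}$ pointwise before averaging. An auxiliary point in our periodic setting is to verify that the Littlewood--Paley building block $\varphi_j$ and the reproducing kernel $\tilde\varphi_j$, which are naturally defined on $\R^n$, act on periodic distributions in such a way that the above maximal function estimates transfer faithfully from $\R^n$ to $S^1$; this is guaranteed by the torus theory developed in Chapter 3.5 of \cite{schmeitrieb}, so the proof carries over verbatim from the Euclidean case treated in \cite{schiwang}.
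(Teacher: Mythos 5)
Your proposal is correct and follows essentially the same argument the paper relies on: the paper's proof is just the one-line remark that this is Lemma~4.4 of \cite{schiwang}, carried over to the periodic setting via \cite{schmeitrieb}, and that argument is precisely the Peetre maximal function plus mean-value-theorem route you have fleshed out. One small bibliographic nit: the paper points to Proposition~3.3.5 and Theorem~3.3.5 of \cite{schmeitrieb} for the periodic Peetre/Fefferman--Stein maximal inequality, rather than Chapter~3.5, which in that reference covers embeddings and duality for periodic Triebel--Lizorkin spaces.
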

		
		The proof is exactly the same as in \cite{schiwang}, only referring to \cite{schmeitrieb} Proposition 3.3.5 and Theorem 3.3.5 instead of the results for $\R^n$. Observe that only $j > 0$ need to be considered due to the discrete nature of Fourier coefficients.\\
		
		Having Lemma \ref{estimatefortrigpoly} available, we can argue analogous to \cite{schiwang}. Let us observe that:
		\begin{align}
			\| f \|_{\dot{W}^{s,(p,q)}(S^1)}^{p}	&= \int_{S^1} \left( \int_{S^1} \frac{\big{|} \sum_{j \in \mathbb{N}} f_{j}(x) - f_{j}(y) \big{|}^q}{|x-y|^{1+sq}} dy \right)^{p/q} dx \notag \\
										&\lesssim \int_{-\pi}^{\pi} \left( \sum_{k \in \Z} 2^{k(1+sq)} \int_{A_{k}(x)}  \big{|} \sum_{j} f_{j}(x) - f_{j}(y) \big{|}^{q} dy \right)^{p/q} dx,
		\end{align}
		where $A_{k}(x) := \{ y | 2^{-k} \leq | x-y | < 2^{-k+1} \}$. Notice that we replaced the distance function on the circle $S^1$ by the one on $\R$ and chose the integration domain appropriately to still estimate the expression $\| f \|_{\dot{W}^{s,(p,q)}(S^1)}^{p}$.\\
		
		As in \cite{schiwang}, let us introduce:
		\begin{equation}
			\int_{-\pi}^{\pi} \left( \sum_{k \in \Z} 2^{k(1+sq)} \int_{A_{k}(x)}  \big{|} \sum_{j} f_{j}(x) - f_{j}(y) \big{|}^{q} dy \right)^{p/q} dx \lesssim R_1 + R_2 + R_3,
		\end{equation}
		where:
		\begin{align}
			R_1	&:= \int_{-\pi}^{\pi} \left( \sum_{k \in \Z} 2^{k(1+sq)} \int_{A_{k}(x)} \left( \sum_{j \leq k} \big{|} f_{j}(x) - f_{j}(y) \big{|} \right)^{q} dy \right)^{p/q} dx \\
			R_2	&:= \int_{-\pi}^{\pi} \left( \sum_{k \in \Z} 2^{k(1+sq)} \int_{A_{k}(x)} \left( \sum_{j > k} \big{|} f_{j}(x) \big{|} \right)^{q} dy \right)^{p/q} dx \\
			R_3	&:= \int_{-\pi}^{\pi} \left( \sum_{k \in \Z} 2^{k(1+sq)} \int_{A_{k}(x)} \left( \sum_{j > k} \big{|} f_{j}(y) \big{|} \right)^{q} dy \right)^{p/q} dx
		\end{align}
		The estimate for each contribution now proceeds as in \cite{schiwang}: For example, $R_1$ can be dealt with by noticing that for some $s > \varepsilon > 0$
		\begin{align}
		\label{helpingequation01}
			\left( \sum_{j \leq k} \big{|} f_{j}(x) - f_{j}(y) \big{|} \right)^{q}	&= \left( \sum_{j \leq k} 2^{j\varepsilon} 2^{-j\varepsilon} \big{|} f_{j}(x) - f_{j}(y) \big{|} \right)^{q} \notag \\
														&\lesssim \left( \sum_{j \leq k} 2^{j\varepsilon} \right)^{q} \sup_{j \leq k} 2^{-j\varepsilon q} | f_{j}(x) - f_{j}(y) |^{q} \notag \\
														&\lesssim 2^{k\varepsilon q} \sum_{j \leq k} 2^{-j \varepsilon q} | f_{j}(x) - f_{j}(y) |^{q}
		\end{align}
		Using now Lemma \ref{estimatefortrigpoly}, we arrive at the following identity completely analogous to \cite{schiwang}:
		\begin{equation}
		\label{helpingequation02}
			| f_{j}(x) - f_{j}(y) | \leq C(r) 2^{j-k} \left( M |f_{j} |^{r}(x) \right)^{1/r}, \quad \forall y \in A_{k}(x), \forall j \leq k,
		\end{equation}
		for some constant $C(r) > 0$ depending only on $r > 0$. Combining \eqref{helpingequation01} and \eqref{helpingequation02}, we find:
		\begin{align}
			R_{1}	&\lesssim \int_{-\pi}^{\pi} \left( \sum_{k \in \Z} 2^{k(1+sq)} \int_{A_{k}(x)} 2^{k\varepsilon q} \sum_{j \leq k} 2^{-j\varepsilon q} \big{|} f_{j}(x) - f_{j}(y) \big{|}^{q} dy \right)^{p/q} dx \notag \\
					&\lesssim \int_{-\pi}^{\pi} \left( \sum_{k \in \Z} 2^{k(1+sq)} \int_{A_{k}(x)} 2^{k\varepsilon q} \sum_{j \leq k} 2^{-j\varepsilon q} C(r)^q 2^{(j-k)q} \left( M |f_{j} |^{r}(x) \right)^{q/r} dy \right)^{p/q} dx \notag \\
					&\lesssim \int_{-\pi}^{\pi} \left( \sum_{k \in \Z} 2^{k(1+sq)} 2^{-k} 2^{k\varepsilon q} \sum_{j \leq k} 2^{-j\varepsilon q} 2^{(j-k)q} \left( M |f_{j} |^{r}(x) \right)^{q/r} \right)^{p/q} dx \notag \\
					&\lesssim \int_{-\pi}^{\pi} \left( \sum_{j > 0} 2^{-j \varepsilon q} \left( M | f_{j} |^{r}(x) \right)^{q/r} 2^{jq} \sum_{k \geq j} 2^{k(s-1+\varepsilon)q} \right)^{p/q} dx \notag \\
					&\lesssim \int_{-\pi}^{\pi} \left( 2^{jsq} \left( M | f_{j} |^{r}(x) \right)^{q/r} \right)^{p/q} dx,
		\end{align}
		where we use $\varepsilon > 0$ suffciently small, such that $s + \varepsilon < 1$. Applying Proposition 3.2.4 in \cite{schmeitrieb}, i.e. the maximal function estimate for $L^{p}l^{q}$-functions on $S^1$, we thus deduce by the very definition of $f_j$ and the Triebel-Lizorkin norm:
		$$R_{1} \lesssim \| f \|_{\dot{F}^{s}_{p,q}}$$
		The other contributions $R_2$ and $R_3$ may also be deduced completely analogous to \cite{schiwang}, but using the corresponding results for $S^1$ as found in \cite{schmeitrieb}. We thus may conclude.
	\end{proof}

	\end{appendices}

\newpage

\end{document}